\newtheorem{theorem}{Theorem}[section]%
\newtheorem{proposition}{Proposition}[section]%
\newtheorem{lemma}{Lemma}[section]%
\newtheorem{corollary}{Corollary}[section]
\newtheorem{remark}{Remark}%
\title{Asymptotic fluctuations of smooth linear statistics of independently perturbed lattices.}
\author{Gabriel Mastrilli $^{1, 2}$}
\date{
	$^1$ Univ Rennes, Ensai, CNRS, CREST \\
	$^2$ Inria, Paris, France.
}
\begin{document}
	\maketitle

	\begin{abstract} We consider the hyperuniform model of {\it d}-dimensional integer lattice perturbed by independent random variables and we investigate the large scale asymptotic fluctuations of smoothed versions of the usual counting statistics, specifically of linear statistics associated to a smooth function with rapid decay at infinity. We highlight three distinct classes of limit, depending on the dimension {\it d} and on the tails of the perturbations. On the one hand, we establish that for dimensions larger than two, central limit theorems hold under mild assumptions on the perturbations. This confirms numerical observations from physics, suggesting that even for highly correlated hyperuniform models, large dimensions favor asymptotic normality. On the other hand, in dimension one, the limiting distribution can be Gaussian, non-Gaussian with finite moments of all orders, or stable with parameter strictly between one and two. These two latter results represent rare examples of non-Gaussian limits for smooth linear statistics of hyperuniform point processes of Classes I and~II.
		~
	\newline \newline
	
	MSC Classification: 60F05, 60G55
	~
	
	Keywords: Hyperuniformity, Non-central Limit Theorem, Perturbed lattices, Poisson summation formula.
	\end{abstract}
	
	\section{Introduction}
	
	\subsection{Motivation and related works}\label{sec:11}
	
	Hyperuniform point processes~\cite{torquato2003local}, which exhibit more regularity than the Poisson point process, have attracted considerable attention in recent years~\cite{torquato2018hyperuniform}. A central class of such processes consists in perturbed lattices $\{x+ Z(x) \mid x \in \mathbb{Z}^d\}$, where $Z$ is a random field~\cite{gabrielli2003generation, gabrielli2004point, dereudre2024non}. The flexibility of this construction~\cite{gabrielli2008tilings, torquato2003local, kim2018effect, klatt2020cloaking} makes it possible to replicate the behavior of more intricate models and aids in their theoretical understanding~\cite{sodin2006random, holroyd2013insertion, peres2014rigidity, arai2019rigidity, yakir2021fluctuations, yakir2022recovering, lachieze2024hyperuniformity, butez2024wasserstein, dereudre2024non}. The present paper explores how hyperuniformity, which implies potentially strong second-order correlations between points, affects higher-order correlations, through the study of the fluctuations of linear statistics of independently perturbed lattices in the large-scale limit. It is directly motivated by the numerical observations of~\cite{torquato2021local}, who observed that hyperuniform point processes in high ambient dimensions $d$ still exhibit large-scale Gaussian fluctuations. However, in dimension one, such fluctuations may fail to appear for strongly hyperuniform point processes. We provide mathematical evidence for these observations and refine the dimension-one case by proving a transition from Gaussian to stable fluctuations, with an unexpected and non-standard behavior at the transition.

	Specifically, we studies the asymptotic behavior, as $r \to \infty$, of the following linear statistic associated with a function $f$:
	\begin{equation*}
		T_r(f) := \sum_{x \in \Phi} f(x/r),
	\end{equation*}
	where $\Phi :=  \{x+ U + \xi_x \mid x \in \mathbb{Z}^d\}$ is a stationary independently perturbed lattice. Here, the i.i.d. random variables $(\xi_x)_{x \in \mathbb{Z}^d}$ are called perturbations, and $U$ is uniformly distributed over $[-1/2, 1/2]^d$, independent of $(\xi_x)_{x \in \mathbb{Z}^d}$.
	
	It is well known that the asymptotic behavior of $T_r(f)$ as $r \to \infty$ provides insights into the large-scale properties of the point process $\Phi$. For instance, if $f = \mathbf{1}_{B(0, 1)}$, then the variance of $T_r(f)$ corresponds to the variance of the number of points of $\Phi$ inside the Euclidean ball $B(0, r)$. In our setting, this variance scales, as $r \to \infty$, smaller than the volume of $B(0, r)$, which is the hyperuniformity property~\cite{torquato2018hyperuniform, ghosh2017fluctuations, coste2021order}. For general $f$, $T_r(f)$ can be viewed as a natural extension of points counting, and one can study the asymptotics of all moments of $T_r(f)$ or its limiting distribution. Another motivation for studying the fluctuations of $T_r(f)$ arises from recent advances in spectral inference of spatial point processes, which rely on central limit theorems for linear statistics~\cite{hawat2023estimating, klatt2022genuine, rajala2023fourier, yang2023fourier, grainger2023spectral, mastrilli2024estimating}.
	
	Numerical observations in~\cite{torquato2021local} suggest that such central limit theorems are facilitated by a large ambient dimension $d$, but in dimension $1$, they may fail for highly hyperuniform point processes of Class I~\cite{torquato2018hyperuniform}. In Theorem~\ref{thm_synthese}, we confirm both observations for $T_r(f)$ when $f$ is a smooth function. In dimensions $d \geq 3$, $T_r(f)$ always exhibits asymptotic Gaussian fluctuations, provided that the perturbations are non-degenerate. In dimension $2$, the same holds under mild assumptions. However, the picture is different in dimension $1$, where Class I perturbed lattices lead to stable fluctuations, refining the observations of~\cite{torquato2021local}. Furthermore, we exhibit unexpected fluctuations, that are not Gaussian nor stable, for moderately hyperuniform perturbed lattices of Class II~\cite{torquato2018hyperuniform}.
	
	To the authors' knowledge, examples of non-Gaussian large-scale limits for smooth linear statistics of infinite point processes have not been theoretically highlighted before. Previous works have established stable fluctuations of stable-weighted linear statistics for determinantal point processes~\cite{aburayama2023scaling} and Poisson-shot noise with non-integrable response functions~\cite{baccelli2015scaling}. However, in these cases, the non-normality arises due to the stable nature of the weights in the first case and to the non-integrability of the response function in the second, rather than from intrinsic properties of the point process itself, as is the case for Class I perturbed lattices in dimension $1$.
	
	\smallskip
	Before detailing our results in Sections~\ref{sec:summary} and~\ref{sec:proof_idea}, we briefly put our proof techniques in perspective with respect to those in the existing literature.
	
	To study the large-scale fluctuations of statistics such as $T_r(f)$, two main settings emerge, depending on whether the variance of $T_r(f)$ is asymptotically bounded or not. When the variance diverges, general central limit theorems can be proved using standard cumulant methods~\cite{soshnikov2002gaussian, heinrich2016strong, blaszczyszyn2019limit, doring2022method}. When the variance is bounded, proofs usually rely on finer considerations of the cumulants' structure of the point process~\cite{costin1995gaussian, sodin2004random, soshnikov2002gaussian, rider2006complex, gaultier2016fluctuations, haimi2024normality, krishnapur2024stationary}. In our case, depending on the dimension $d$ and the tail behavior of the perturbations, we encounter both previous settings, along with a third peculiar situation where the variance converges to zero, leading to specific techniques.
	
	To derive the cumulants of $T_r(f)$ for general perturbations, we express them in the Fourier domain, leveraging the smoothness of $f$. This representation allows us to obtain a general central limit theorem in dimension $d \geq 3$, a situation where the variance of $T_r(f)$ always diverges as $r \to \infty$, facilitating the proof. In dimension $d = 2$, the variance may be asymptotically bounded. For such setting, we establish the asymptotic normality of $T_r(f)$ by analyzing the combinatorial structure of the cumulants, a key argument encountered in the literature in proving central limit theorems for processes such as the Ginibre or the zero set of the GAF~\cite{costin1995gaussian, sodin2004random, soshnikov2002gaussian, rider2006complex, gaultier2016fluctuations, haimi2024normality, krishnapur2024stationary}. In dimension $1$, $T_r(f)$ has a bounded variance when the perturbations belong to the class of Cauchy-like heavy-tailed distributions. To handle this challenging situation, our general expression of the cumulants in the Fourier domain becomes useful. This case corresponds to a Class II perturbed lattice, for which we prove that $T_r(f)$ exhibits non-Gaussian and non-stable fluctuations. The underlying reason is that the absence of moments for the perturbations leads to an irregular characteristic function at zero, thereby disrupting the combinatorial arguments used in dimension $2$. Finally, still in dimension $1$, for lattices perturbed by $\alpha$-stable random variables with $\alpha \in (1,2)$, the variance of $T_r(f)$ even converges to $0$ and the cumulant method fails. Instead, we analyze the characteristic function of $T_r(f)$ directly, showing that in this setting $T_r(f)$ can be approximated by the local shot noise $\sum_{x \in \Phi \cap B(0, r)} f'(x/r)\xi_x/r$.
	
	The remainder of this introduction states our results in Section~\ref{sec:summary}, outlines the proofs strategies in Section~\ref{sec:proof_idea}, and introduces notations in Section~\ref{sec_notations}. The rest of the article is organized as follows. Before extending all results to stationary perturbed lattices in Section~\ref{sec_stat}, we first focus for technical reasons in Sections~\ref{sec_exp_var},~\ref{sec_gauss}, and~\ref{sec_non_gauss}  on non-stationary perturbed lattices. Section~\ref{sec_exp_var} derives an asymptotic expansion of the cumulants of their linear statistics as $r \to \infty$. Then, Section~\ref{sec_gauss} proves central limit theorems for $d \geq 2$. Sections~\ref{sec_alpha_1} and~\ref{sec_no_clt} analyze non-stable and $\alpha$-stable limits in dimension $1$. Finally, Section~\ref{sec_technical} contains key lemmas related to the Poisson summation formula. For completeness, Appendix~\ref{sec_hyp_phi} provides proofs of standard facts regarding the link between the behavior at zero of the characteristic function of a random variable and its tail and Appendix~\ref{app_slwy} contains results about slowly varying functions.
	
	\subsection{Summary of the results}\label{sec:summary}
	
	In this section we summarize our results in Theorem~\ref{thm_synthese} and then discuss their assumptions. We consider the following linear statistics, where $f$ is a Schwartz function (see Section~\ref{sec_notations}), 
	\begin{equation}\label{eq:Tr}
		T_r^i := \sum_{x \in \Phi^i} f(x/r),~i \in \{0, 1\},
	\end{equation}
	and where the non stationary and stationary perturbed lattices are
	\begin{equation}\label{eq:def_phi}
		\Phi^{0} :=  \{x+ \xi_x|~x \in \mathbb{Z}^d\},~		\Phi^{1} :=  \{x+ U + \xi_x|~x \in \mathbb{Z}^d\}.
	\end{equation}
	The random variables $(\xi_x)_{x \in \mathbb{Z}^d}$ are i.i.d. and take values in $\mathbb{R}^d$. Moreover, $U$ is uniformly distributed over $[-1/2, 1/2]^d$ and is independent of $(\xi_x){x \in \mathbb{Z}^d}$. Note that $\Phi^1$ is the perturbed lattice $\Phi$ considered in Section~\ref{sec:11}. We introduce the non-stationary perturbed lattice $\Phi^0$ as it is more convenient for the proofs, before generalizing to the stationary setting of~$\Phi^1$.
	
	The next theorem summarizes the main results of the paper, namely Theorems \ref{thm_tcl_var_unbouded}, \ref{thm_tcl_var_bouded}, \ref{thm_tcl_var_unbounded_2_ss}, \ref{thm_tcl_var_unbounded_2}, \ref{thm_no_clt_alpha_1}, \ref{thm_cv_alpha_stable} and~\ref{thm_cv_stat}.
	
	\medskip \begin{theorem}\label{thm_synthese}
		Let $d \geq 1$ and $f$ be a Schwartz function. We consider i.i.d. random variables~$(\xi_x)_{x \in \mathbb{Z}^d}$ with characteristic function $\varphi$. We assume that $f$ is non-null and that the perturbations are not a.s. constant and we consider for $i \in \{0, 1\}$ the linear statistics $T_r^{i}$ defined in equation~\eqref{eq:Tr}.
		\begin{enumerate}
			\setlength\itemsep{0.5em}
			\item \label{i_d3_thm_syn} If $d \geq 3$, 
			$$\operatorname{Var}[T_r^i]^{-1/2} \left(T_r^i - r^d \int_{\mathbb{R}^d} f(x) dx\right) \xrightarrow[r \to \infty]{d} \mathcal{N}(0, 1).$$
				\setlength\itemsep{0.5em}
			\item \label{i_2_bound_thm_syn} If $d =2 $ and $\mathbb{E}[|\xi_{0}|^2] < \infty$, then, denoting by $\xi_0'$ an independent copy of $\xi_0$,
			$$T_r^i - r^2 \int_{\mathbb{R}^2} f(x) dx \xrightarrow[r \to \infty]{d} \mathcal{N}\left(0, \frac12\int_{\mathbb{R}^2} |\mathcal{F}[f](x)|^2 \mathbb{E}[|(\xi_0 - \xi_0').x|^2] dx\right).$$
			\item\label{i_2_unbound_thm_syn} If $d =2 $ and $\mathbb{E}[|\xi_0|^\nu] = \infty$ for some $0 < \nu < 2$, then there exists a diverging positive sequence $(r_n)_{n \geq 1}$ such that
			$$\operatorname{Var}[T_{r_n}^i]^{-1/2} \left(T_{r_n}^i - r_n^2 \int_{\mathbb{R}^2} f(x) dx\right) \xrightarrow[r \to \infty]{d} \mathcal{N}(0, 1).$$
		\end{enumerate}
		For the next results, we suppose that $1 - \varphi(x) \sim L(|x|)|x|^{\alpha}$ as $|x| \to 0$, where $\alpha \in (0, 2]$ and $L$ is slowly varying with a limit $c \in [0, \infty]$ at zero (see Section~\ref{sec_notations}).
		\begin{enumerate}
			\setlength\itemsep{0.5em}  
			\setcounter{enumi}{3}
			\item \label{i_2_phi_thm_syn}  If $d \geq 2$ with no further assumption or $d = 1$ with $\alpha < 1$ or $\alpha = 1$ and $c = \infty$, then
				$$L(1/r)^{-1/2} r^{\frac{\alpha-d}{2}} \left(T_r^i - r^d \int_{\mathbb{R}^d} f(x) dx\right) \xrightarrow[r \to \infty]{d} \mathcal{N}\left(0, \int_{\mathbb{R}^d} |\mathcal{F}[f](x)|^{2} |x|^{\alpha} dx\right).$$
			\item\label{i_11_thm_syn} If $d = \alpha = 1$ and $c \in (0, \infty)$, i.e.  $1 - \varphi(x) \sim c|x|$ as $|x| \to 0$, then $T_r^i - r \int_{\mathbb{R}} f(x) dx$ converges in distribution toward a random variable, which is not Gaussian in general, but has moments of all orders.
			\item \label{i_1_satble_thm_syn} If $d = 1$, $1 < \alpha \leq 2$ and $c \in (0, \infty)$, i.e. $1 - \varphi(x) \sim c|x|^{\alpha}$ as $|x| \to 0$, then
			 $$r^{\frac{\alpha-1}{\alpha}} \left(T_r^i - r \int_{\mathbb{R}} f(x) dx\right) \xrightarrow[r \to \infty]{d} \left(c\int_{\mathbb{R}} |f'(x)|^{\alpha} dx\right)^{1/\alpha} S_{\alpha},$$
			 where $S_{\alpha}$ is $\alpha$-stable with characteristic function $\mathbb{E}[e^{2\bm{i}\pi k S_{\alpha}}] = e^{-|k|^{\alpha}}$.
		\end{enumerate}
	\end{theorem}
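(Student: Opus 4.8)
The plan is to reduce, via Theorem~\ref{thm_cv_stat}, to the non-stationary lattice $\Phi^{0}$, and then to compare $T_r^{0}$ with the \emph{local shot noise}
\[
	N_r := \sum_{x \in \mathbb{Z}} f'(x/r)\,\frac{\xi_x}{r},
\]
a sum of independent variables which is a.s.\ convergent (Kolmogorov's three-series theorem, using $\sum_x|f'(x/r)|^{\alpha}<\infty$ and $\mathbb{E}|\xi_0|<\infty$). Since $\operatorname{Var}[T_r^{0}]\asymp r^{1-\alpha}\to 0$ in this regime, the cumulant method is unavailable and one must argue through characteristic functions. Two reductions come first. The hypothesis forces $\mathbb{E}[\xi_0]=0$: by the Tauberian equivalences of Appendix~\ref{sec_hyp_phi}, $1-\varphi(x)\sim c|x|^{\alpha}$ with $\alpha\in(1,2]$ amounts to $\mathbb{P}(|\xi_0|>t)\sim c'\,t^{-\alpha}$ for some $c'>0$, so $\alpha>1$ gives $\mathbb{E}|\xi_0|<\infty$, and a nonzero mean would make $1-\varphi(x)$ asymptotically a nonzero imaginary multiple of $x$, of exact order $|x|\gg|x|^{\alpha}$, contradicting the hypothesis. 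Secondly, the Poisson summation estimates of Section~\ref{sec_technical} (equivalently the first cumulant computed in Section~\ref{sec_exp_var}) give $\mathbb{E}[T_r^{0}]=r\int_{\mathbb{R}}f+O(r^{-M})$ for every $M$, and likewise $\sum_x f(x/r)=r\int_{\mathbb{R}}f+O(r^{-M})$; hence, with $R_x := f((x+\xi_x)/r)-f(x/r)-f'(x/r)\xi_x/r$,
\[
	T_r^{0} - r\!\int_{\mathbb{R}}\!f - N_r = \sum_{x\in\mathbb{Z}}R_x + O(r^{-M}).
\]
It then remains to prove \textup{(a)} $r^{(\alpha-1)/\alpha}N_r \xrightarrow[r\to\infty]{d}\big(c\int_{\mathbb{R}}|f'|^{\alpha}\big)^{1/\alpha}S_{\alpha}$ and \textup{(b)} $r^{(\alpha-1)/\alpha}\sum_{x}R_x\to 0$ in probability, and to conclude by Slutsky's lemma.

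For \textup{(a)}, rescaling gives $r^{(\alpha-1)/\alpha}N_r = r^{-1/\alpha}\sum_{x}f'(x/r)\xi_x$, a sum of independent variables whose characteristic function at $k$ equals $\prod_{x}\varphi\big(k\,r^{-1/\alpha}f'(x/r)\big)$. The arguments $k\,r^{-1/\alpha}f'(x/r)$ tend to $0$ uniformly in $x$, so, using $\log\varphi(u)=-(1-\varphi(u))+O(|1-\varphi(u)|^{2})$ and $1-\varphi(u)\sim c|u|^{\alpha}$ with a uniform control of the $o(1)$ (provided by the slowly varying estimates of Appendix~\ref{app_slwy}), and noting that the quadratic corrections sum to $O(r^{-1})$, the logarithm of this product equals $-c|k|^{\alpha}\,r^{-1}\sum_{x}|f'(x/r)|^{\alpha}(1+o(1))$. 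The Riemann sum $r^{-1}\sum_{x}|f'(x/r)|^{\alpha}$ converges to $\int_{\mathbb{R}}|f'|^{\alpha}$ because $|f'|^{\alpha}$ is continuous and rapidly decreasing, so the limit is $-c|k|^{\alpha}\int_{\mathbb{R}}|f'|^{\alpha}=\log\mathbb{E}\big[e^{2\bm{i}\pi k(c\int_{\mathbb{R}}|f'|^{\alpha})^{1/\alpha}S_{\alpha}}\big]$. The limit being real, i.e.\ the stable law being symmetric, reflects precisely that $1-\varphi$ carries no imaginary part of order $|x|^{\alpha}$.

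For \textup{(b)}, since the $R_x$ are independent with $\sum_x\mathbb{E}[R_x]=O(r^{-M})$, it suffices (by the standard Lindeberg-type criterion for convergence to $0$ in probability of a sum of independent variables) to check that, for $Y_x:=r^{(\alpha-1)/\alpha}R_x$ and every $\epsilon>0$, one has $\sum_x\mathbb{P}(|Y_x|>\epsilon)\to0$, $\sum_x\mathbb{E}[Y_x^{2}\mathbf{1}_{|Y_x|\le\epsilon}]\to0$ and $\sum_x\mathbb{E}[Y_x\mathbf{1}_{|Y_x|\le\epsilon}]\to0$. One uses the Taylor bounds $|R_x|\le\min\big(2\|f'\|_{\infty}|\xi_x|/r,\ \tfrac12\|f''\|_{\infty}(\xi_x/r)^{2}\big)$, sharpened for far sites $|x|\gg r$ into $|R_x|\le C_N(|x|/r)^{-N}\min(|\xi_x|/r,(\xi_x/r)^2)$ on the event $\{|\xi_x|\le|x|/2\}$ (by the rapid decay of $f',f''$), and complemented on $\{|\xi_x|>|x|/2\}$ by the crude $|R_x|\le|f((x+\xi_x)/r)|+|f(x/r)|+|f'(x/r)||\xi_x|/r$ together with $\mathbb{P}(|\xi_0|>t)=O(t^{-\alpha})$; combined with Karamata's theorem (which uses $\alpha>1$ for $\mathbb{E}[|\xi_0|\mathbf{1}_{|\xi_0|>t}]=O(t^{1-\alpha})$ and $\alpha\le2$ for $\mathbb{E}[\xi_0^{2}\mathbf{1}_{|\xi_0|\le t}]=O(t^{2-\alpha})$, $\mathbb{E}[\xi_0^{4}\mathbf{1}_{|\xi_0|\le t}]=O(t^{4-\alpha})$), this reduces each of the three sums to a negative power of $r$; for instance $\{|Y_x|>\epsilon\}$ forces $|\xi_x|\gtrsim r^{(\alpha+1)/(2\alpha)}$ on a bulk site, so the first sum is $O(r\cdot r^{-(\alpha+1)/2})=O(r^{(1-\alpha)/2})\to0$, and similarly for the others. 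This gives \textup{(b)}.

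I expect the main obstacle to be \textup{(b)}: because $\xi_0$ has no second moment when $\alpha<2$, the remainder $\sum_x R_x$ cannot be controlled in $L^{2}$ or $L^{1}$, and the three conditions above must be verified through a careful case analysis in $x$ (bulk sites $|x|\lesssim r$ versus far sites, the latter contributing only through the rare event that a perturbation lands back near the support of $f$) and in $\xi_x$ (moderate versus large), where all the exponent inequalities ultimately reduce to $(\alpha-1)^{2}>0$; the borderline case $\alpha=2$, in which $S_2$ is Gaussian and \textup{(a)} is a Lindeberg central limit theorem for $N_r$, must be absorbed into the same argument. A secondary, more technical, point is the passage in \textup{(a)} from the behavior of $\varphi$ near the origin to the $\alpha$-stable law with its exact scale, for which I would rely on the equivalences of Appendix~\ref{sec_hyp_phi} and the slowly varying estimates of Appendix~\ref{app_slwy}.
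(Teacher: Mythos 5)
Your proposal only addresses point~\ref{i_1_satble_thm_syn} of the statement. Theorem~\ref{thm_synthese} summarizes six different results, and points~\ref{i_d3_thm_syn}--\ref{i_11_thm_syn} --- the CLT for $d\geq 3$, the bounded-variance CLT in $d=2$ under $\mathbb{E}[|\xi_0|^2]<\infty$, the subsequence CLT for heavy tails in $d=2$, the CLT under the regularly varying assumption, and the non-Gaussian limit with all moments finite at $d=\alpha=1$ --- are not touched by your argument and cannot be: in those regimes $\operatorname{Var}[T_r^i]$ diverges or stays bounded, so the shot-noise linearization is no longer the dominant fluctuation and a completely different machinery is required. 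The paper obtains them from an exact Fourier representation of all cumulants of $T_r^0$ via Poisson summation (Proposition~\ref{prop_kappa}), variance lower bounds (Lemmas~\ref{lemma_var_d3} and~\ref{lemma_var_wtr_tail}), combinatorial cancellations of the higher cumulants in dimension $2$ (Lemma~\ref{lemma_cmb}, Theorems~\ref{thm_tcl_var_bouded}--\ref{thm_tcl_var_unbounded_2}), a Carleman-type moment argument at $d=\alpha=1$ (Theorem~\ref{thm_no_clt_alpha_1}), and the de-stationarization step (Theorem~\ref{thm_cv_stat}), which your reduction to $\Phi^0$ cites but does not establish. As a proof of the stated theorem, five of the six items are simply missing.

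For the one item you do treat, your plan is essentially that of Section~\ref{sec_no_clt}: Taylor-linearize $f((x+\xi_x)/r)$, obtain the $\alpha$-stable limit of the shot noise from its characteristic function and a Riemann sum (your step (a) matches the proof of~\eqref{eq_limit_phi_r} and Lemma~\ref{lemma_riemann_f_alpha}), and show the remainder is negligible. Two caveats. First, the claimed equivalence ``$1-\varphi(x)\sim c|x|^{\alpha}$ amounts to $\mathbb{P}(|\xi_0|>t)\sim c' t^{-\alpha}$'' is false (at $\alpha=2$ the hypothesis is compatible with Gaussian perturbations) and is not what Appendix~\ref{sec_hyp_phi} gives; what you actually need --- $\mathbb{E}[|\xi_0|^{\mu}]<\infty$ for $\mu<\alpha$ (Lemma~\ref{lemma_char_and_moment}), $\mathbb{E}[\xi_0]=0$ (since $1-\varphi(x)=o(|x|)$ forces $\varphi'(0)=0$), and the one-sided bound $\mathbb{P}(|\xi_0|>t)=O(t^{-\alpha})$ from the standard truncation inequality --- is available, so this is repairable but must be restated. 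Second, your treatment of the remainder genuinely differs from the paper's: the paper truncates at $|x|\leq r^{\gamma}$ and compares characteristic functions through $|\prod a_i-\prod b_i|\leq\sum|a_i-b_i|$ with a H\"older--Taylor bound requiring only moments $1+1/p<\alpha$ (Lemmas~\ref{lemma_al_big_main_term}--\ref{lemma_numero}), whereas you keep the whole lattice and invoke the degenerate convergence criterion. This route can be made to work (your $\min(|\xi_x|/r,\xi_x^2/r^2)$ interpolation plays the role of the paper's H\"older step, and the exponents close with margin of order $(\alpha-1)^2/\alpha$), but it is only sketched: note in particular that the event $\{|Y_x|\leq\epsilon\}$ does not constrain $|\xi_x|$, so the truncated-variance and truncated-mean sums must be bounded via $\min(Y_x^2,\epsilon^2)$-type estimates together with the tail of $\xi_0$, i.e.\ exactly the case analysis you defer.
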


	Before detailing the points of the previous theorem, it is enlightening to discuss the assumption regarding the behavior of the characteristic function $\varphi$ of the perturbations near zero. First of all, this behavior is related to the tail of the perturbations. Indeed, a large $\alpha$ corresponds to a relatively fast tail decay, while a small $\alpha$ implies a slow decay (see Lemma~\ref{lemma_char_and_moment}). Moreover, the parameter $\alpha$ is connected to the asymptotic behavior of the variance of $T_r^i$, which scales as $L(1/r)r^{d-\alpha}$ (see Proposition~\ref{prop_var}), and can thus be interpreted as the strength of correlation between the points of the perturbed lattice. This motivates the order of the items in the previous theorem.
	
	When $d \geq~3$, the fluctuations of $T_r^i$ are always asymptotically Gaussian, with no assumption on the perturbations $(\xi_x)_{x \in \mathbb{Z}^d}$, except their non degeneracy (see the point~\ref{i_d3_thm_syn}). An explanation for this striking phenomenon is that, due to the smoothness of the function $f$, the cumulants of $T_r^i$ behave like those of Brillinger-mixing point processes~\cite{bismut1982processus, heinrich2012asymptotic} (see Proposition~\ref{prop_kappa}).
	
	However, when $d =2$, it is less clear if such a general result holds. Nevertheless, for square integrable perturbations, although the variance of $T_r^i$ is bounded, a central limit theorem holds (see the point~\ref{i_2_bound_thm_syn}). Moreover, when the perturbations are heavy tailed, there is at least convergence along a sub-sequence (see the point~\ref{i_2_unbound_thm_syn}). Since the behavior of the moments of $T_r^0$ strongly depend in dimension $2$ on the behavior near zero of the characteristic function $\varphi$ of the perturbations, it seems challenging to obtain the convergence of the full sequence without any further assumptions. But, with a mild assumption on the behavior near zero of $\varphi$, asymptotic normality is guaranteed (see the point~\ref{i_2_phi_thm_syn}). Note that in this former case, the variance is, in particular, allowed to diverges slowly, leading to possibly logarithmic rate of convergence.
	
	When $d = 1$, the fluctuations of $T_r^i$ can be non Gaussian. In particular, for $\alpha$-stable perturbations with parameter $\alpha \in (1, 2)$, in dimension $d = 1$, the limiting distribution is $\alpha$-stable  (see point~\ref{i_1_satble_thm_syn}). It is noteworthy that the stable nature of the fluctuations emerges only asymptotically, since the moments of $T_r^i$ are all finite. Moreover, the rate $r^{(\alpha - 1)/\alpha}$ is non-standard when $\alpha < 2$. Nevertheless, when $\alpha = 2$, we recover a Gaussian limit with the convergence rate given by the standard deviation.
	 
	Finally, a transition appears at $\alpha = d = 1$, where the convergence is toward a non-standard random variable, that is not Gaussian in general, but also not stable with parameter strictly less than two, as it possesses moments of all orders (see the point~\ref{i_11_thm_syn}).
	
	\medskip \begin{remark}\label{rmk_smooth_f}
		To handle general perturbations, we impose smoothness and decay assumptions on the function $f$, through the Schwartz assumption. Indeed, without them, interplay between the irregularity of the points and $f$ may lead to case by case results concerning, at least, the second order properties of the linear statistics of perturbed lattices associated to $f$. Specifically, it has been proved in~\cite{yakir2021fluctuations} that, even in the case of Gaussian perturbations $(\xi_{x})_{x \in \mathbb{Z}^d}$ in dimension $d \geq 3$, one can construct an $L^1(\mathbb{R}^d)$ function with $L^2(\mathbb{R}^d)$ gradient such that the variance of $T_r^1$, does not scale as $r^{d-2}$, contrary to what happens with smooth functions (see also~\cite{dereudre2024non} studying the variance of $T_r^1$ where $f = \mathbf{1}_{B(0, 1)}$ for dependently perturbed lattices and~\cite{nazarov2011fluctuations} in the context of zeroes of GAF functions).
	\end{remark}

	\subsection{Main ideas of the proofs}\label{sec:proof_idea}
	
	The first sections, namely Sections~\ref{sec_exp_var},~\ref{sec_gauss} and~\ref{sec_non_gauss} deal with the non-stationary perturbed lattice $\Phi^0$, defined in equation~\eqref{eq:def_phi}, in order to exploit the independence between the perturbations, without the common translation by the uniform random variable $U$, involved in the stationary perturbed lattice~$\Phi^1$.
	
	In Section~\ref{sec_exp_var}, using the Poisson summation formula, we exhibit in Proposition~\ref{prop_kappa} the main term of the asymptotic expansion of the cumulants of $T_r^0$ as $r \to \infty$. To do so, we extend the Fourier-method of~\cite{yakir2021fluctuations} to general i.i.d. perturbations $(\xi_x)_{x \in \mathbb{Z}^d}$ and to cumulants of all orders, leveraging on the Schwartz assumption on $f$ (see also the recent paper~\cite{dereudre2024non} which consider variance of linear statistics associated to $f = \mathbf{1}_{B(0, 1)}$ for possibly dependently perturbed lattices). 
	
	In Section~\ref{sec_clt_inf_d}, we exploit the expression of the cumulants derived previously to upper bound the high order cumulants of $T_r^0$ and lower bound its variance by a large enough quantity when $d \geq 3$. Gathering both results, we obtain a central limit theorem when $d \geq 3$ (refer to Theorem~~\ref{thm_tcl_var_unbouded}). 
	
	The case of bounded variance in dimension two is then considered in Section~\ref{sec_alpha_2}. To handle it, we use a second order Taylor expansion of the characteristic function of the perturbation inside the cumulants of $T_r^0$ to be able to compute their limits and show that algebraic cancellations occur for the cumulants of order larger than three (see Theorem~\ref{thm_tcl_var_bouded}). Then, we investigate the case where the perturbations possess an infinite moment of order strictly smaller than $2$. By lower bounding the variance of $T_r^0$ with a quantity that depends on the tails of the perturbations, that decay slowly because of their lack of moments, we prove a central limit theorem for a sub-sequence (refer to Theorem~\ref{thm_tcl_var_unbounded_2_ss}). Finally, by assuming that the characteristic function of the perturbations admits a specific behavior near zero, we are able to compute an asymptotic equivalent (and not only a lower bound) of the variance of $T_r^0$ and prove that its fluctuations are asymptotically Gaussian, even for perturbations with an infinite moment of order 2, but with all moments of order strictly smaller than 2 (refer to Theorem~\ref{thm_tcl_var_unbounded_2}).
	
	Then, we exhibit a first example of non-Gaussian limit in Section~\ref{sec_alpha_1} by considering the dimension 1 and perturbations with a characteristic function $\varphi$ satisfying $1 - \varphi(x) \sim |x|$ as $|x| \to 0$. In this setting, the variance of $T_r^0$ is bounded but, contrary to the case of bounded variance in dimension 2, we prove that no general asymptotic cancellations occur in the cumulants. However, they converge toward consistent cumulants, and, as a consequence, we obtain in Theorem~\ref{thm_no_clt_alpha_1} convergence toward a non-Gaussian random variable, characterized by its moments.
	
	In Section~\ref{sec_no_clt}, still in dimension 1, we consider the case where the variance of $T_r^0$ converges to zero at a rate of $r^{1 - \alpha}$, where $\alpha \in (1, 2]$, assuming that $1 - \varphi(x) \sim |x|^{\alpha}$ as $|x| \to 0$. Since the fluctuations are $\alpha$-stable with possibly $\alpha < 2$, we cannot directly use a cumulant-based proof. Instead, we analyze the limit of the characteristic function of $T_r^0$. The argument consists of two main steps. The first step is to quantify that the fluctuations of $T_r^0$ are dominated by those of the local linear statistic $\sum_{x \in \mathbb{Z}, |x| \leq r^{\gamma}} f((x+\xi_x)/r)$ for a well-chosen parameter $\gamma>1$. The second step is to show, via a Taylor expansion, that this local linear statistic fluctuates like the shot noise $\sum_{|x| \leq r^{\gamma}} f'(x/r) \xi_x/r$, which exhibits $\alpha$-stable fluctuations.
	
		\subsection{General notations}\label{sec_notations}
	
	We consider the Euclidean space $\mathbb{R}^d$ of dimension $d \geq 1$. For $a, b \in \mathbb{C}^d$, $a \cdot b = \sum_{i = 1}^d a_i \overline{b_i}$ denotes the Hermitian scalar product of $\mathbb{C}^d$ between $a$ and $b$. The associated Euclidean norm is denoted by $|a|$ and for a complex number $z \in \mathbb{C}$, $\Re z$ is its real part. We denote $\bm{i} = \sqrt{-1} \in \mathbb{C}$. We use the notation $\sum_{x_1, \dots, x_n \in \mathbb{Z}^d}^{\neq}$ to indicate that the sum runs over the distinct points $x_1, \dots, x_n$: $\forall i \neq j,~x_i \neq x_j$.
	
	For $1\leq p < \infty$, we denote by $L^{p}(\mathbb{R}^d)$ the space of measurable functions $f: \mathbb{R}^d \to \mathbb{R}$ such that $\int_{\mathbb{R}^d} |f(x)|^p dx < \infty$. The $L^p$-norm of $f \in L^p(\mathbb{R}^d)$ is defined by $\|f\|_p^p:= \int_{\mathbb{R}^d} |f(x)|^p dx$. We denote by $L^{\infty}(\mathbb{R}^d)$ the space of functions $f: \mathbb{R}^d \to \mathbb{R}$ such that $\|f\|_{\infty}:= \sup_{x \in \mathbb{R}^d} |f(x)| < \infty$. When it is well defined, the convolution between two functions $f_1$ and $f_2$ is denoted by $f_1 \ast f_2 = \int_{\mathbb{R}^d} f_1(y) f_2(\cdot - y) dy$. More generally, when it is well defined, the convolution between $n \geq 2$ functions $f_1, \dots, f_n$ is defined recursively by $\circledast_{i = 1}^n f_i = f_n \ast \{\circledast_{i = 1}^{n-1} f_i\}$ with the convention $\circledast_{i = 1}^{1} f_i = f_1$. Recall that the Young's inequality for the convolution states that $\|f \ast  g\|_r \leq \|f\|_p \|g\|_q$ for $(p,q, r) \in [1, \infty]$ such that $1+1/r = 1/p+1/q$. For $p = 2$, the scalar product between the $L^{2}(\mathbb{R}^d)$ functions $f_1$ and $f_2$ is $\langle f_1, f_2 \rangle = \int_{\mathbb{R}^d} f_1(x) \overline{f_2(x)} dx$. In view of the Poisson summation formula, we adopt the following definition for the Fourier transform of a function $f \in L^{1}(\mathbb{R}^d)$: 
	$$\forall k \in \mathbb{R}^d,\quad \mathcal{F}[f](k):= \int_{\mathbb{R}^d} f(x) e^{2\pi \bm{i} k \cdot x} dx.$$ As usual, the Fourier transform is extended to $L^2(\mathbb{R}^d)$ functions thanks to the Plancherel theorem~\cite{folland2009fourier}: $\forall f_1, f_2 \in L^2(\mathbb{R}^d)$, $\langle f_1, f_2 \rangle = \langle \mathcal{F}[f_1], \mathcal{F}[f_2] \rangle$. For coherence with the convention of the Fourier transform on $L^{2}(\mathbb{R}^d)$ defined above, we adopt the following definition of the characteristic function $\varphi$ of a random variable $X$ with value in $\mathbb{R}^d$:
	$$\forall t \in \mathbb{R}^d, \varphi(t) := \mathbb{E}[e^{2\pi \bm{i} t\cdot X}].$$
	
	A function $f: \mathbb{R}^d \to \mathbb{R}$ is in the Schwartz space $\mathcal{S}(\mathbb{R}^d)$ if $f$ is infinitely differentiable and if for all multi-indexes $(\beta_1, \beta_2) \in (\mathbb{N}^d)^2$, then $\sup_{x \in \mathbb{R}^d} |x^{\beta_1} \partial_{\beta_2} f(x)| < \infty$. Moreover, a function $L : (0, \infty) \mapsto (0,\infty)$ is said slowly varying (at zero) if for all $a > 0$, $L(at)/L(t) \to 1$ as $t \to 0$.
	
	Finally, we define algebraically the cumulants. Let $m \geq 1$. For real random variables $X_1, \dots, X_m$, we denote their joint cumulant as:
	\begin{equation}\label{def_kappa}
		\kappa(X_1, \dots, X_m) := \sum_{n = 1}^m (-1)^{n-1}(n-1)! \sum_{B_1, \dots, B_n}\prod_{i = 1}^n \mathbb{E}\left[\prod_{b \in B_i} X_{b}\right],
	\end{equation}
	where $\sum_{B_1, \dots, B_n}$ denotes the sums over all the partitions of $\{1, \dots, m\}$ into $n$ sets $B_1,\dots, B_n$.
	Moreover, for a real random variable $X$ having moments of order $m$, its $m$-th cumulant is defined as
	\begin{equation}\label{eq_kappa_m}
		\kappa_m(X) = \kappa(X, \dots, X),
	\end{equation}
	where $X$ appears $m$-times.

	\section{Cumulants of smooth linear statistics of perturbed lattices}\label{sec_exp_var}
	
	The core of the proofs concerning points~\ref{i_d3_thm_syn}-\ref{i_11_thm_syn} of Theorem~\ref{thm_synthese} is the following Proposition~\ref{prop_kappa}, which derives a simple asymptotic expression for the cumulants of the linear statistics $T_r^0$ associated to the non-stationary lattice $\Phi^0$ (see \eqref{eq:def_phi}), as $r \to \infty$. We deduce from this proposition the main term of the asymptotic expansion of $\mathbb{E}[T_r^0]$ and $\operatorname{Var}[T_r^0]$ in Corollary~\ref{cor_expec_var}, Proposition~\ref{prop_var} and Lemma~\ref{lemma_var_d3}.
	
	The key argument of the proof of Proposition~\ref{prop_kappa} is Lemma~\ref{lemma_poisson_k_0}, which allows us to express the cumulants of $T_r^0$ in the Fourier domain.
	
	\medskip \begin{lemma}\label{lemma_poisson_k_0}
		Let $n \geq 1$, $(f_i)_{i = 1 \dots, n}$ be a family of Schwartz functions and $\xi$ be a random variable with characteristic function $\varphi$. Then, the following equality holds, where both terms are finite:
		\begin{equation}\label{eq_lemma_poisson}
			\sum_{x \in \mathbb{Z}^d} \prod_{i = 1}^n \mathbb{E}[f_i(x+\xi)] =  \sum_{x \in \mathbb{Z}^d} \circledast_{i = 1}^n \{\overline{\mathcal{F}[f_i]} \varphi\}(x).
		\end{equation}
	\end{lemma}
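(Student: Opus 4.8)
The plan is to prove the identity by expanding each factor $\mathbb{E}[f_i(x+\xi)]$ using Fourier inversion and then applying the Poisson summation formula. First I would note that since $f_i$ is Schwartz, $\mathcal{F}[f_i]$ is Schwartz as well, and $f_i(y) = \int_{\mathbb{R}^d} \mathcal{F}[f_i](k) e^{-2\pi\bm{i}k\cdot y}\,dk$. Taking expectations and using Fubini (justified by $\|\mathcal{F}[f_i]\|_1 < \infty$ and $|\varphi|\leq 1$), one gets
\[
\mathbb{E}[f_i(x+\xi)] = \int_{\mathbb{R}^d} \mathcal{F}[f_i](k)\, e^{-2\pi\bm{i}k\cdot x}\,\overline{\varphi(k)}\,dk = \int_{\mathbb{R}^d} \overline{\overline{\mathcal{F}[f_i](k)}\,\varphi(k)}\, e^{-2\pi\bm{i}k\cdot x}\,dk,
\]
wait — more cleanly, write $g_i := \overline{\mathcal{F}[f_i]}\,\varphi$, so that $\mathbb{E}[f_i(x+\xi)] = \overline{\mathcal{F}[g_i](x)}$ using the sign conventions of the paper; since $f_i$ is real, $\mathbb{E}[f_i(x+\xi)]$ is real, so this equals $\mathcal{F}[\overline{g_i}](-x)$ as needed. (I would double-check the exact placement of conjugates and signs against the paper's Fourier conventions; this is bookkeeping rather than substance.)

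The next step is the product over $i$. Since each $g_i \in L^1(\mathbb{R}^d) \cap L^\infty(\mathbb{R}^d)$ (indeed $\mathcal{F}[f_i]$ is Schwartz and $|\varphi| \leq 1$), the product of the $n$ inverse Fourier transforms $\prod_{i=1}^n \mathbb{E}[f_i(x+\xi)]$ is the value at $x$ of the inverse Fourier transform of the $n$-fold convolution $\circledast_{i=1}^n g_i = \circledast_{i=1}^n\{\overline{\mathcal{F}[f_i]}\varphi\}$; this is the standard fact that Fourier transform turns products into convolutions, applied iteratively. One must check that $\circledast_{i=1}^n g_i$ is itself integrable so that its inverse Fourier transform makes sense pointwise — this follows from Young's inequality since each $g_i \in L^1$, giving $\|\circledast_{i=1}^n g_i\|_1 \leq \prod_i \|g_i\|_1 < \infty$. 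Then I would apply the Poisson summation formula
\[
\sum_{x \in \mathbb{Z}^d} h(x) = \sum_{x \in \mathbb{Z}^d} \mathcal{F}[h](x)
\]
to $h = $ (the relevant product), which is exactly $\mathbb{E}[f_1(\cdot+\xi)]\cdots\mathbb{E}[f_n(\cdot+\xi)]$, to land on the right-hand side of \eqref{eq_lemma_poisson}.

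The main obstacle is verifying that Poisson summation applies, i.e.\ that $h$ and $\mathcal{F}[h] = \circledast_{i=1}^n\{\overline{\mathcal{F}[f_i]}\varphi\}$ both decay fast enough (or satisfy a Wiener-algebra-type hypothesis) for the classical Poisson formula to be valid, since $\varphi$ is merely continuous and bounded — not smooth — so $h$ need not be Schwartz. The safe route is: $h$ is a product of $n$ functions each of which is $\mathcal{F}[\text{Schwartz}]$ multiplied by a bounded function, hence $h$ is continuous and, crucially, $h(x) = \mathbb{E}[\prod_i f_i(x+\xi)]$... no, the factors are expectations, not inside one expectation; still, each factor $\mathbb{E}[f_i(x+\xi)]$ is bounded by $\mathbb{E}[\sup|f_i|]$ and, being an average of shifts of the Schwartz function $f_i$, inherits the rapid decay: $|\mathbb{E}[f_i(x+\xi)]| \leq \mathbb{E}[\,|f_i(x+\xi)|\,]$ and one can bound this by a Schwartz majorant away from the support region uniformly in the shift? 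Not quite uniformly, but $\sum_{x}\mathbb{E}[|f_i(x+\xi)|] = \mathbb{E}[\sum_x |f_i(x+\xi)|] < \infty$ since $\sum_{x\in\mathbb{Z}^d}|f_i(x+t)|$ is bounded uniformly in $t$ for Schwartz $f_i$. This shows $h \in \ell^1(\mathbb{Z}^d)$ after summing, and in fact $h \in L^1(\mathbb{R}^d)$; combined with $\mathcal{F}[h] \in L^1$ (shown above via Young), both sides of \eqref{eq_lemma_poisson} are absolutely convergent. To get the actual Poisson identity with both sides finite I would invoke the version valid when $h \in L^1$, $\mathcal{F}[h]\in L^1$, and the periodization of $h$ is continuous (here $h$ itself is continuous and decays, so its periodization is a uniformly convergent series of continuous functions, hence continuous) — this is the hypothesis one finds in the technical lemmas of Section~\ref{sec_technical}, which I would cite. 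That periodization/continuity verification, rather than any computation, is where the care is needed.
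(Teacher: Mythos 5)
Your overall architecture (Fourier inversion of each factor, product $\to$ convolution, then Poisson summation) is the right one, and your verification that the left-hand side converges absolutely — via $\sum_{x\in\mathbb{Z}^d}\mathbb{E}[|f_1(x+\xi)|]=\mathbb{E}\bigl[\sum_{x}|f_1(x+\xi)|\bigr]<\infty$ and the uniform-majorant argument for continuity of the periodization — is essentially the same computation the paper uses to check condition 3 of its Poisson theorem. But there is a genuine gap on the other side of the identity. You justify "both sides of \eqref{eq_lemma_poisson} are absolutely convergent" by $\mathcal{F}[h]\in L^1(\mathbb{R}^d)$ (Young's inequality), and you then invoke a Poisson summation statement with hypotheses "$h\in L^1$, $\mathcal{F}[h]\in L^1$, periodization of $h$ continuous". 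Integrability of $\circledast_{i=1}^n\{\overline{\mathcal{F}[f_i]}\varphi\}$ over $\mathbb{R}^d$ does not control its sum over the lattice $\mathbb{Z}^d$, and that lattice sum is precisely the right-hand side whose finiteness the lemma asserts. Nor is the version you invoke a valid theorem: to identify the (continuous) periodization of $h$ with its Fourier series at $0$ one needs $\sum_{k\in\mathbb{Z}^d}|\mathcal{F}[h](k)|<\infty$, not merely $\mathcal{F}[h]\in L^1(\mathbb{R}^d)$; this is exactly condition 3 of the paper's Theorem~\ref{thm_poisson}, which you cannot cite for your weaker hypothesis. Note also that Theorem~\ref{thm_poisson} cannot be applied directly to $h=\prod_i\mathbb{E}[f_i(\cdot+\xi)]$ anyway, because its condition 2 (pointwise decay $|h(x)|\le C(1+|x|)^{-d-\delta}$) can fail when $\xi$ has heavy tails — the paper points this out explicitly before Lemma~\ref{lemma_poisson_k}.

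The missing ingredient is a pointwise decay estimate for the convolution $s:=\circledast_{i=1}^n\{\overline{\mathcal{F}[f_i]}\varphi\}$. Once one shows, e.g., $|s(x)|\le C_k(1+|x|)^{-k}$ for all $k$ (which follows here from $|\varphi|\le1$, the rapid decay of each $\mathcal{F}[f_i]$, and a standard convolution estimate, or, as the paper does in Lemma~\ref{lemma_fourier_convol}, by integration by parts using smoothness of $\mathbb{E}[f_i(\xi-\cdot)]$), the right-hand side is absolutely summable and, more efficiently, one can apply Theorem~\ref{thm_poisson} to $s$ itself — whose Fourier transform is $h$, with $\sum_x|h(x)|<\infty$ being the bound you already proved — which is the paper's route and avoids the delicate periodization version altogether. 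As written, your argument stops short of this decay estimate, so the finiteness of the right-hand side and the applicability of Poisson summation are both unproven.
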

	
	Lemma~\ref{lemma_tail_0} simplifies the right hand side of the above sum by proving that all terms except one are negligible when considering $f_i(\cdot/r)$ instead of $f_i$ and letting $r \to \infty$. 
	
	\medskip \begin{lemma}\label{lemma_tail_0}
		Let $r > 0$, $n \geq 1$, $(f_i)_{i = 1 \dots, n}$ be a family of Schwartz functions and $\varphi$ be a characteristic function. We denote $f_{i, r} = f_i(\cdot/r)$. Then, for all $\beta > 0$, there exists a constant $C(\beta) < \infty$ such that:
		$$\sum_{x \in \mathbb{Z}^d\setminus\{0\}} \left|\circledast_{i = 1}^n \{\overline{\mathcal{F}[f_{i, r}]}\varphi\}(x)\right| \leq C(\beta) r^{-\beta}.$$
	\end{lemma}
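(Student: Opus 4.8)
The plan is to reduce the statement to a clean pointwise bound on the convolution and then to an elementary lattice sum.

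First I would rescale and peel off $\varphi$. Since $f_{i,r}=f_i(\cdot/r)$ we have $\mathcal{F}[f_{i,r}](k)=r^d\mathcal{F}[f_i](rk)$, so writing $g_i:=\overline{\mathcal{F}[f_i]}$, which is again a Schwartz function, one gets $\overline{\mathcal{F}[f_{i,r}]}(k)=r^d g_i(rk)$. Using $|\varphi|\le 1$ together with the pointwise convolution inequality $|\phi_1\ast\phi_2|\le|\phi_1|\ast|\phi_2|$ (iterated over the $n$ factors, and valid here since each $r^d|g_i|(r\,\cdot)$ lies in $L^1(\mathbb{R}^d)$ with norm $\|g_i\|_1<\infty$), and then the change of variables $u=ry$ in the convolution integrals, I would obtain, for every $x\in\mathbb{R}^d$ and $r>0$,
\[
\Bigl|\circledast_{i=1}^n\{\overline{\mathcal{F}[f_{i,r}]}\varphi\}(x)\Bigr|\ \le\ \circledast_{i=1}^n\bigl\{r^d|g_i|(r\,\cdot)\bigr\}(x)\ =\ r^d\bigl(\circledast_{i=1}^n|g_i|\bigr)(rx).
\]
All the convolutions here are finite, since they are dominated by convolutions of fixed $L^1\cap L^\infty$ functions independent of $r$.

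Second, I would record that $G:=\circledast_{i=1}^n|g_i|$ has rapid decay: for every $N>0$ there is a constant $C_N<\infty$ (depending on $f_1,\dots,f_n$) with $G(u)\le C_N(1+|u|)^{-N}$. Indeed each $g_i$ is Schwartz, hence $|g_i(u)|\le C_N'(1+|u|)^{-N}$ for every $N$, so it is enough to check that the $n$-fold convolution of functions of the form $(1+|\cdot|)^{-N}$ with $N>d$ is again bounded by a constant times $(1+|\cdot|)^{-N}$. This is the standard dyadic argument: for $F(x)=\int(1+|y|)^{-a}(1+|x-y|)^{-b}\,dy$ with $a,b>d$ one splits the domain into $\{|y|\le|x|/2\}$, where $|x-y|\ge|x|/2$, and $\{|y|>|x|/2\}$; on each piece the "far" factor is bounded by a constant times $(1+|x|)^{-\min(a,b)}$ and the other factor is integrated out using $a,b>d$, so $F(x)$ is bounded by a constant times $(1+|x|)^{-\min(a,b)}$. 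Iterating over the $n$ factors keeps the exponent equal to $N$. Combining this with the first step gives, for every $N$, a constant $C_N<\infty$ with
\[
\Bigl|\circledast_{i=1}^n\{\overline{\mathcal{F}[f_{i,r}]}\varphi\}(x)\Bigr|\ \le\ C_N\, r^d\,(1+r|x|)^{-N},\qquad x\in\mathbb{R}^d,\ r>0.
\]

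Third, given $\beta>0$ I would fix $N>\beta+2d$ and sum this bound over $x\in\mathbb{Z}^d\setminus\{0\}$, distinguishing two regimes. If $r\ge 1$, then $|x|\ge 1$ forces $1+r|x|\ge r|x|$, whence $r^d(1+r|x|)^{-N}\le r^{d-N}|x|^{-N}\le r^{-\beta}|x|^{-N}$, and summing gives $C_N r^{-\beta}\sum_{x\neq 0}|x|^{-N}$, which is finite because $N>d$. If $r<1$, I would use $(1+r|x|)^{-N}\le(1+r|x|)^{-(d+1)}$ and compare the sum to an integral: $r^d\sum_{x\in\mathbb{Z}^d}(1+r|x|)^{-(d+1)}\le C_d$ uniformly for $r\le 1$ (a Riemann-sum estimate, since for $r<1$ the summand varies slowly at the lattice scale), and as $r^{-\beta}>1$ this is $\le C_d r^{-\beta}$. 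In both regimes the left-hand side is therefore $\le C(\beta)r^{-\beta}$, with $C(\beta)$ depending only on $\beta$, on $d$, and on the $f_i$; this simultaneously proves finiteness of the sum.

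The only genuinely delicate points are the uniformity of the constant over all $r>0$, in particular the small-$r$ behaviour (handled by the second regime above), and the lemma that convolution preserves polynomial decay, which is routine but must be invoked with the $N>d$ integrability threshold. Everything else is bookkeeping with Schwartz bounds and the scaling $\mathcal{F}[f(\cdot/r)]=r^d\mathcal{F}[f](r\,\cdot)$.
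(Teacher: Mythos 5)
Your proof is correct and follows essentially the same route as the paper's proof of the more general Lemma~\ref{lemma_tail}: bound $|\varphi|\le 1$, use the scaling $\mathcal{F}[f_{i,r}]=r^{d}\mathcal{F}[f_i](r\,\cdot)$ to reduce to $r^{d}\bigl(\circledast_{i=1}^n|\mathcal{F}[f_i]|\bigr)(rx)$, establish rapid decay of this $n$-fold convolution, and sum over $\mathbb{Z}^d\setminus\{0\}$. The only cosmetic difference is that the paper distributes the homogeneous weight $|rx|^{d+\beta}$ through the convolution integral via the triangle and Young inequalities, obtaining directly the bound $Cr^{-\beta}|x|^{-d-\beta}$ which is summable for every $r>0$, whereas you invoke the standard fact that convolution preserves $(1+|u|)^{-N}$ decay and then handle the $r\ge 1$ and $r<1$ regimes separately.
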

 	In order to focus directly on the applications of these two crucial lemmas in Sections~\ref{sec_exp_var}-\ref{sec_stat}, their proofs, whose milestone is the Poisson summation formula, are deferred to Section~\ref{sec_technical}. More precisely, they follow as corollaries of the slightly more general Lemmas~\ref{lemma_poisson_k} and~\ref{lemma_tail}.
	
	Gathering Lemma~\ref{lemma_poisson_k_0} and~\ref{lemma_tail_0}, we obtain in the next proposition a convenient asymptotic expansion of the cumulants $\kappa_m(T_r^0)$ as $r \to \infty$ (recall the definition~\eqref{eq_kappa_m} of the cumulants of a real random variable). 
	
	\medskip \begin{proposition}\label{prop_kappa}
		Let $d \geq 1$, $f \in \mathcal{S}(\mathbb{R}^d)$ and  $(\xi_x)_{x \in \mathbb{Z}^d}$ be i.i.d. random variables with characteristic function $\varphi$. We denote $f_r = f(\cdot/r)$. Let $m \geq 1$. Then, for all $ \beta > 0$:
		$$\kappa_m\left(T_r^0\right) = \sum_{n = 1}^m (-1)^{n-1} (n-1)! \sum_{B_1, \dots, B_n}  \circledast_{i = 1}^n \{\overline{\mathcal{F}[f_r^{|B_i|}]}\varphi\}(0) + O(r^{-\beta}).$$
	\end{proposition}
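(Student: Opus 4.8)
The plan is to reduce $\kappa_m(T_r^0)$ to a single lattice sum of elementary expectations by exploiting the additivity of cumulants over independent summands, and then to invoke Lemmas~\ref{lemma_poisson_k_0} and~\ref{lemma_tail_0}. First I would record that $T_r^0$ is well defined with finite moments of all orders. Writing $g_x := f_r(x + \xi_x)$, the family $(g_x)_{x\in\mathbb{Z}^d}$ is independent, uniformly bounded by $\|f\|_\infty$, and $\sum_{x}\mathbb{E}[|g_x|]=\sum_x\mathbb{E}[|f_r(x+\xi_0)|]<\infty$ since, by rapid decay of the Schwartz function $f$, the sum $\sum_{x\in\mathbb{Z}^d}|f_r(x+y)|$ is bounded uniformly in $y\in\mathbb{R}^d$. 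Hence $T_r^0=\sum_x g_x$ converges absolutely almost surely, and the nonnegative variable $\sum_x|g_x|$, an absolutely convergent sum of independent bounded variables with summable means, has finite moments of all orders; consequently the truncations $S_K:=\sum_{|x|\le K}g_x$ converge to $T_r^0$ in $L^p$, $p\ge1$, over the underlying probability space, by dominated convergence, and $T_r^0$ itself has finite moments of all orders.

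Next, since joint cumulants are polynomials in the moments they are continuous along $L^m$-convergence, and since cumulants add over independent summands,
$$\kappa_m(T_r^0)=\lim_{K\to\infty}\kappa_m(S_K)=\lim_{K\to\infty}\sum_{|x|\le K}\kappa_m(g_x)=\sum_{x\in\mathbb{Z}^d}\kappa_m\bigl(f_r(x+\xi_0)\bigr),$$
the last series converging absolutely because the $m$-th cumulant of a bounded variable is dominated by a constant (depending only on $m$ and $\|f\|_\infty$) times its mean absolute value. Unfolding each summand by the algebraic definition~\eqref{def_kappa} with all $m$ arguments equal to $f_r(x+\xi_0)$, and exchanging the finite sum over set partitions $B_1,\dots,B_n$ of $\{1,\dots,m\}$ with the sum over $x$, gives
$$\kappa_m(T_r^0)=\sum_{n=1}^m(-1)^{n-1}(n-1)!\sum_{B_1,\dots,B_n}\;\sum_{x\in\mathbb{Z}^d}\;\prod_{i=1}^n\mathbb{E}\bigl[f_r^{|B_i|}(x+\xi_0)\bigr],$$
where $f_r^{|B_i|}$ denotes the pointwise $|B_i|$-th power.

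It then remains to treat, for each fixed partition, the inner lattice sum. Noting that $f_r^{|B_i|}=(f^{|B_i|})(\cdot/r)$ is the dilation by $r$ of the Schwartz function $f^{|B_i|}$ (a product of Schwartz functions being Schwartz), Lemma~\ref{lemma_poisson_k_0} applied to the family $(f^{|B_i|}(\cdot/r))_{i=1}^n$ and $\xi=\xi_0$ yields $\sum_x\prod_i\mathbb{E}[f_r^{|B_i|}(x+\xi_0)]=\sum_x\circledast_{i=1}^n\{\overline{\mathcal{F}[f_r^{|B_i|}]}\varphi\}(x)$, while Lemma~\ref{lemma_tail_0} applied to the same family shows that for every $\beta>0$ the contribution of the terms $x\neq0$ is $O(r^{-\beta})$. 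Since only the finitely many functions $f,f^2,\dots,f^m$ and orders $n\le m$ occur, the implied constant depends only on $\beta$, $m$ and $f$; summing over the finitely many pairs $(n;B_1,\dots,B_n)$ (at most the $m$-th Bell number of them) keeps the accumulated error $O(r^{-\beta})$ and produces exactly the stated identity.

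The genuinely delicate points are the two soft-analytic steps at the start: verifying that $T_r^0$ has moments of all orders and justifying the interchange $\kappa_m\bigl(\sum_x g_x\bigr)=\sum_x\kappa_m(g_x)$ for the infinite, though absolutely convergent, sum. After these, the proposition follows mechanically from the two Poisson-summation lemmas together with the finiteness of the partition combinatorics, and no further estimate is needed.
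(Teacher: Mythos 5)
Your proposal is correct and follows essentially the same route as the paper: reduce $\kappa_m(T_r^0)$ to the single lattice sum $\sum_{x\in\mathbb{Z}^d}\kappa_m\bigl(f_r(x+\xi_x)\bigr)$ using independence, unfold via the algebraic definition~\eqref{def_kappa}, and conclude with Lemmas~\ref{lemma_poisson_k_0} and~\ref{lemma_tail_0}. The only difference is cosmetic: you justify the interchange by truncation, $L^p$-convergence and additivity of cumulants for independent sums (with a self-contained moment bound), whereas the paper uses multilinearity of joint cumulants together with Fubini, justified by Lemma~\ref{lemma_moment}.
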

	\begin{proof}
		The quantity that we are interested in is $\kappa_m(T_r^0) = \kappa_m\left(\sum_{x \in \mathbb{Z}^d} f\left((x + \xi_x)/r\right)\right)$. Since the cumulants are multi-linear, and using  the Fubini's theorem for the sums overs $\mathbb{Z}^{dk}$ for $1 \leq k \leq m$, justified by Lemma~\ref{lemma_moment}, we obtain,
		\begin{align*}
			\kappa_m(T_r^0) = \sum_{x_1, \dots, x_m \in \mathbb{Z}^{dm}} \kappa\left(f\left(\frac{x_1+\xi_{x_1}}r\right), \dots, f\left(\frac{x_m+\xi_{x_m}}r\right)\right).
		\end{align*}
		Then, this expression can be simplified further using the independence of the perturbations, 
		\begin{align*}
			\kappa_m(T_r^0) = \sum_{x \in \mathbb{Z}^d} \kappa_m\left(f\left(\frac{x+\xi_x}r\right)\right).
		\end{align*}
		Let $\xi$ be a random variable with characteristic function $\varphi$. Recalling the definition~\eqref{def_kappa} of the cumulants $\kappa_m$ and using the Poisson summation formula of Lemma~\ref{lemma_poisson_k_0}, we obtain
		\begin{align}\label{eq_kappa_int}
			\kappa_m(T_r^0) &=\sum_{n = 1}^m (-1)^{n-1} (n-1)! \sum_{B_1, \dots, B_n}   \sum_{x \in \mathbb{Z}^d} \prod_{i = 1}^n \mathbb{E}\left[f_r^{|B_i|}\left(x + \xi\right)\right] \nonumber\\& =\sum_{n = 1}^m (-1)^{n-1} (n-1)! \sum_{B_1, \dots, B_n}   \sum_{x \in \mathbb{Z}^d}\circledast_{i = 1}^n \{\overline{\mathcal{F}[f_r^{|B_i|}]}\varphi\}(x) .
		\end{align}
		To conclude, we extract the term $x = 0$ in the previous series and use Lemma~\ref{lemma_tail_0}.
	\end{proof}
	
	The next corollary is simply the previous proposition when $m \in \{1, 2\}$. Firstly, it shows that, although $\Phi^0$ is not necessarily stationary, the expectation of $T_r^0$ is the one of stationary perturbed lattice $\Phi^{1}$ (according to Campbell’s averaging formula~\cite{daley2003introduction} or Lemma~\ref{lemma_exp_exp_stat}), plus a remainder term that vanishes quickly when considering the limit $r \to \infty$. Secondly, it provides a convenient asymptotic expression for the variance of $T_r^0$. Similarly, although we are not necessarily considering stationary point processes with simple points, the variance behaves asymptotically similarly and $1 - |\varphi|^2$ can be interpreted as the structure factor of $\Phi^0$~\cite{torquato2018hyperuniform, kim2018effect, hawat2023estimating}, also called density of the Bartlett spectral measure~\cite{daley2003introduction}. As highlighted in the proof of the previous proposition, the benefit to consider non-stationary perturbed lattices is the possibility to use the independence between the random variables $(\xi_x)_{x \in \mathbb{Z}^d}$ to reduce the $dm$-dimensional sums appearing in the $m$-th cumulants to $d$-dimensional ones.  
	
	\medskip \begin{corollary}\label{cor_expec_var}
		Let $d \geq 1$ and $f \in \mathcal{S}(\mathbb{R}^d)$. We consider a family of i.i.d. random variables $(\xi_x)_{x \in \mathbb{Z}^d}$ with characteristic function $\varphi$. Then, for all $\beta > 0$.
		\begin{equation}\label{eq_exp}
			\mathbb{E}[T_r^0] \underset{r \to \infty}{=} r^d \int_{\mathbb{R}^d} f(x) dx + O(r^{-\beta}),
		\end{equation}
		\begin{equation}\label{eq_var}
			\operatorname{Var}[T_r^0] = r^d \int_{\mathbb{R}^d} |\mathcal{F}[f](x)|^2 (1 - |\varphi(x/r)|^2)dx + O(r^{-\beta}).
		\end{equation}
	\end{corollary}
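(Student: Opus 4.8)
The plan is to obtain \eqref{eq_exp} and \eqref{eq_var} by specializing Proposition~\ref{prop_kappa} to $m=1$ and $m=2$ respectively, and then identifying the resulting finite sums of convolutions evaluated at the origin with the claimed closed forms, using the behavior of the Fourier transform under dilation together with the elementary symmetries of $\mathcal{F}[f]$ and of a characteristic function.

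For \eqref{eq_exp} I would take $m=1$ in Proposition~\ref{prop_kappa}. The only partition of $\{1\}$ is the trivial one, so the sum collapses to $\overline{\mathcal{F}[f_r]}(0)\,\varphi(0)$. Since $\varphi(0)=1$ and $\mathcal{F}[f_r](0)=\int_{\mathbb{R}^d} f_r = r^d\int_{\mathbb{R}^d} f$ is real, this equals $r^d\int_{\mathbb{R}^d} f(x)\,dx$, and the $O(r^{-\beta})$ error is inherited verbatim from the proposition (uniformly over $r$, and over the choice of $\beta$).

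For \eqref{eq_var} I would take $m=2$. There are exactly two partitions of $\{1,2\}$: the single block $\{1,2\}$ (contributing $n=1$, sign $+1$, and $\overline{\mathcal{F}[f_r^2]}(0)\varphi(0)$) and the pair of singletons (contributing $n=2$, sign $-1$, and $\{\overline{\mathcal{F}[f_r]}\varphi\}\ast\{\overline{\mathcal{F}[f_r]}\varphi\}(0)$). For the first term, $\mathcal{F}[f_r^2](0)=\int (f_r)^2 = r^d\|f\|_2^2 = r^d\int_{\mathbb{R}^d}|\mathcal{F}[f](x)|^2\,dx$ by Plancherel. For the second term I would use the dilation identity $\mathcal{F}[f_r](k)=r^d\mathcal{F}[f](rk)$ together with the Hermitian symmetries $\mathcal{F}[f](-k)=\overline{\mathcal{F}[f](k)}$ (valid since $f$ is real-valued) and $\varphi(-k)=\overline{\varphi(k)}$, to rewrite $(g\ast g)(0)=\int_{\mathbb{R}^d} g(y)g(-y)\,dy$ with $g=\overline{\mathcal{F}[f_r]}\varphi$ as $\int_{\mathbb{R}^d}|\mathcal{F}[f_r](y)|^2|\varphi(y)|^2\,dy$; the change of variables $x=ry$ turns this into $r^d\int_{\mathbb{R}^d}|\mathcal{F}[f](x)|^2|\varphi(x/r)|^2\,dx$. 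Subtracting yields exactly $r^d\int_{\mathbb{R}^d}|\mathcal{F}[f](x)|^2(1-|\varphi(x/r)|^2)\,dx$, again up to the $O(r^{-\beta})$ coming from Proposition~\ref{prop_kappa}.

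There is no real obstacle here: the statement is a bookkeeping corollary of Proposition~\ref{prop_kappa}. The only points that require a moment's care are the normalization $\varphi(0)=1$, keeping the dilation factor $r^d$ consistent (it enters with different exponents in $\mathcal{F}[f_r^2](0)$ and in $(g\ast g)(0)$ before the change of variables), and invoking reality of $f$ to collapse the convolution at the origin into a genuine $L^2$-type integral; finiteness of all quantities and uniformity of the remainder are already guaranteed by Lemma~\ref{lemma_poisson_k_0} and Proposition~\ref{prop_kappa}.
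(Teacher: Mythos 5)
Your proof is correct and follows essentially the same route as the paper: equation~\eqref{eq_exp} is Proposition~\ref{prop_kappa} with $m=1$, and \eqref{eq_var} comes from $m=2$, identifying $\mathcal{F}[f_r^2](0)=r^d\|f\|_2^2$ via Plancherel and rewriting $\{\overline{\mathcal{F}[f_r]}\varphi\}\ast\{\overline{\mathcal{F}[f_r]}\varphi\}(0)=r^d\int|\mathcal{F}[f](x)|^2|\varphi(x/r)|^2dx$ by the dilation and Hermitian-symmetry identities. Your version is in fact slightly more explicit than the paper's about the reality of $f$ and the change of variables, and your bookkeeping of the $r^d$ factors is accurate.
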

	\begin{proof}
		Equation~\eqref{eq_exp} is directly given by Proposition~\ref{prop_kappa} with $m = 1$. Concerning equation~\eqref{eq_var}, according to Proposition~\ref{prop_kappa} for $m = 2$, we have
		\begin{align*}
			\operatorname{Var}[T_r^0] &= \mathcal{F}[f_r^2](0) - \{\overline{\mathcal{F}[f_r]} \varphi\} \ast \{\overline{\mathcal{F}[f_r]} \varphi\}(0) + O(r^{-\beta}),
		\end{align*} 
		where $f_r(x) = f(x/r)$. Finally,~\eqref{eq_var} follows from the Plancherel theorem:
		\begin{align*}
			\operatorname{Var}[T_r^0] &=  r^d\int_{\mathbb{R}^d} |\mathcal{F}[f](x)|^2 dx -r^d\int_{\mathbb{R}^d} |\mathcal{F}[f](rx)|^2 |\varphi(x)|^2dx + O(r^{-\beta}).
		\end{align*}
	\end{proof}

	With equation~\eqref{eq_var}, we deduce in the next proposition the asymptotic behavior of the variance of $T_r^0$ when the characteristic function of the perturbations possesses a prescribed behavior near zero. Although this result will be not used before Section~\ref{sec_alpha_2}, it motivates the plan of the paper, as detailed in Remark~\ref{rmk_alpha} below. Note that its proof mimics the one of Theorem 3 of~\cite{soshnikov2002gaussian}, who considers determinantal point processes. 
	
	\medskip \begin{proposition}\label{prop_var}
		Let $d \geq 1$ and $f \in \mathcal{S}(\mathbb{R}^d)$. We consider i.i.d. random variables $(\xi_x)_{x \in \mathbb{Z}^d}$ with characteristic function $\varphi$ and suppose that $$1 - |\varphi(x)|^2 \sim L(|x|) |x|^{\alpha},$$ as $|x| \to 0$, where $\alpha \in (0, 2]$ and $L$ is slowly varying. Then,
		$$\operatorname{Var}[T_r^0] \underset{r \to \infty}{\sim} L(1/r) r^{d-\alpha}\int_{\mathbb{R}^d} |\mathcal{F}[f](x)|^2|x|^{\alpha} dx.$$ 
	\end{proposition}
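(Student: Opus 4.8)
The plan is to start from the exact variance formula provided by Corollary~\ref{cor_expec_var},
\begin{equation*}
	\operatorname{Var}[T_r^0] = r^d \int_{\mathbb{R}^d} |\mathcal{F}[f](x)|^2 \bigl(1 - |\varphi(x/r)|^2\bigr)\,dx + O(r^{-\beta}),
\end{equation*}
and to understand the asymptotics of the integral $I_r := r^d \int_{\mathbb{R}^d} |\mathcal{F}[f](x)|^2 (1 - |\varphi(x/r)|^2)\,dx$ under the hypothesis $1 - |\varphi(x)|^2 \sim L(|x|)|x|^\alpha$ as $|x|\to 0$. The natural change of variables $x = ru$ turns this into $r^{2d}\int_{\mathbb{R}^d} |\mathcal{F}[f](ru)|^2 (1-|\varphi(u)|^2)\,du$, which is awkward; instead I would keep $x$ as the variable, substitute $y = x/r$ only in spirit, and write $I_r = r^d \int |\mathcal{F}[f](x)|^2 g(x/r)\,dx$ with $g = 1 - |\varphi|^2$. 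Dividing and multiplying by $L(1/r)r^{-\alpha}$, the target is to show
\begin{equation*}
	\frac{I_r}{L(1/r)r^{d-\alpha}} = \int_{\mathbb{R}^d} |\mathcal{F}[f](x)|^2 \,\frac{g(x/r)}{L(1/r)(1/r)^\alpha}\,dx \xrightarrow[r\to\infty]{} \int_{\mathbb{R}^d} |\mathcal{F}[f](x)|^2 |x|^\alpha\,dx.
\end{equation*}

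The pointwise convergence of the integrand is exactly a statement about slowly varying functions: for fixed $x \neq 0$, writing $g(x/r)/[L(1/r)(1/r)^\alpha] = [g(x/r)/(L(|x|/r)(|x|/r)^\alpha)] \cdot [L(|x|/r)/L(1/r)] \cdot |x|^\alpha$, the first bracket tends to $1$ by the hypothesis on $g$ (since $|x|/r \to 0$), and the second tends to $1$ by the definition of slow variation at zero (taking $a = |x|$); hence the integrand converges to $|\mathcal{F}[f](x)|^2 |x|^\alpha$ for a.e.\ $x$. This last function is integrable because $\mathcal{F}[f]$ is Schwartz, so $|\mathcal{F}[f](x)|^2|x|^\alpha$ decays rapidly at infinity and is bounded near the origin (for $\alpha > 0$). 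It then remains to justify passing the limit inside the integral, i.e.\ to produce an integrable dominating function, uniformly in $r \geq r_0$ for some $r_0$.

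The main obstacle, as usual in such arguments, is precisely this domination step, which requires a uniform Potter-type bound on the slowly varying function. The idea is to invoke the Potter bounds (available in Appendix~\ref{app_slwy}, or a standard reference such as Bingham--Goldie--Teugels): for any $\delta > 0$ there exist constants $A_\delta < \infty$ and $t_0 > 0$ such that for all $0 < s, t \leq t_0$,
\begin{equation*}
	\frac{L(s)}{L(t)} \leq A_\delta \max\!\left\{ \Bigl(\tfrac{s}{t}\Bigr)^{\delta}, \Bigl(\tfrac{s}{t}\Bigr)^{-\delta}\right\}.
\end{equation*}
Applying this with $s = |x|/r$ and $t = 1/r$ (both small once $|x| \leq r t_0$, and handling $|x| > r t_0$ separately by the rapid decay of $\mathcal{F}[f]$ together with the crude bound $g \leq 1$ and $L(1/r)(1/r)^\alpha \to 0$), together with the fact that $g(x/r) \leq C \max\{(|x|/r)^{\alpha-\varepsilon},(|x|/r)^{\alpha+\varepsilon}\}$ near zero (again by slow variation applied to $g/|\cdot|^\alpha$, or directly to $g$), one gets for all large $r$ a bound of the form
\begin{equation*}
	|\mathcal{F}[f](x)|^2 \,\frac{g(x/r)}{L(1/r)(1/r)^\alpha} \leq C'\, |\mathcal{F}[f](x)|^2 \bigl(|x|^{\alpha - 2\delta} + |x|^{\alpha + 2\delta}\bigr),
\end{equation*}
which is integrable for $\delta$ small enough (the $|x|^{\alpha-2\delta}$ term is integrable at the origin since $\alpha - 2\delta > 0$, and both terms are dominated at infinity by the Schwartz decay of $\mathcal{F}[f]$). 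Dominated convergence then yields $I_r \sim L(1/r)r^{d-\alpha}\int |\mathcal{F}[f](x)|^2|x|^\alpha\,dx$. Finally, since $L(1/r)r^{d-\alpha} \to \infty$ whenever $\alpha < d$, or more generally dominates $r^{-\beta}$ for $\beta$ large in all cases of interest where the stated equivalent is nontrivial, the error term $O(r^{-\beta})$ from Corollary~\ref{cor_expec_var} is negligible (choosing $\beta$ large enough, and using that $L(1/r)r^{d-\alpha}$ is regularly varying of index $\alpha - d < \beta$), which gives the claimed asymptotic equivalence for $\operatorname{Var}[T_r^0]$.
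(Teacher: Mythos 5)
Your proposal is correct and follows essentially the same route as the paper: start from the variance formula of Corollary~\ref{cor_expec_var}, obtain the pointwise limit of the rescaled integrand from the hypothesis on $1-|\varphi|^2$ together with slow variation, dominate via a Potter-type bound on $L(|x|/r)/L(1/r)$ (this is exactly Proposition~\ref{prop_slwy} in the appendix), and dispose of the region $|x|\gtrsim r$ using the Schwartz decay of $\mathcal{F}[f]$; the paper merely organizes the same ingredients as an $\varepsilon$-splitting rather than a single dominated-convergence pass. The only blemish is cosmetic: $L(1/r)r^{d-\alpha}$ is regularly varying of index $d-\alpha$ (not $\alpha-d$), though your condition $\beta>\alpha-d$ for absorbing the $O(r^{-\beta})$ remainder is the right one.
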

	\begin{proof}
		We use the expression of the variance of $T_r^0$ given by Proposition~\ref{prop_kappa}, and choose $\beta = \alpha$. Accordingly,
		\begin{align*}
			\frac{r^{\alpha-d}}{L(1/r)}\operatorname{Var}[T_r^0] &= \frac{r^{\alpha}}{L(1/r)}\left( \int_{\mathbb{R}^d} |\mathcal{F}[f](x)|^2 (1 -|\varphi(x)|^2)dx\right) +  o(1) \\& =  \int_{\mathbb{R}^d} |\mathcal{F}[f](x)|^2 \frac{r^{\alpha}}{L(1/r)}(1 - |\varphi(x/r)|^2)dx + o(1).
		\end{align*}
		Let $\varepsilon > 0$. The assumption on the behavior of $\varphi$ near zero ensures that there exists $\delta > 0$ such that for all $|y| \leq \delta$, then $|1 - |\varphi(y)|^2 - L(|y|)|y|^{\alpha}| \leq \varepsilon| L(|y|)| |y|^{\alpha}$. This implies
		\begin{align*}
			\left|\frac{r^{\alpha-d}}{L(1/r)}\operatorname{Var}[T_r^0] - \int_{\mathbb{R}^d} |\mathcal{F}[f](x)|^2|x|^{\alpha} dx\right| \leq A + B+ D + o(1),
		\end{align*}
		where 
		\begin{align*}
			&A := \varepsilon \int_{B(0, \delta r)} |\mathcal{F}[f](x)|^2 |x|^{\alpha} \frac{L(|x|/r)}{L(1/r)} dx,
			\\&B := \int_{B(0, \delta r)} |\mathcal{F}[f](x)|^2 |x|^{\alpha} \left|\frac{L(|x|/r)}{L(1/r)} -1\right| dx, 
			\\& D := \frac{2r^{\alpha}}{L(1/r)} \int_{B(0, \delta r)^c} |\mathcal{F}[f](x)|^2 dx + \int_{B(0, \delta r)^c} |\mathcal{F}[f](x)|^2 |x|^{\alpha} dx.
		\end{align*}
		Since $L$ is slowly varying, there exists $(b, C, q, r_0) \in (0, \infty)^4$ such that for all $r \geq r_0$ and $0 < |x| \leq b r$, we have $L(|x|/r)/L(1/r) \leq C \max(|x|^q, |x|^{-1/2})$ (see Proposition~\ref{prop_slwy}). Subsequently, by reducing $\delta$ if necessary to ensure $\delta < b$, we obtain, for $r \geq r_0$,
		$$A \leq \varepsilon C \int_{B(0, \delta r)} |\mathcal{F}[f](x)|^2 |x|^{\alpha} \max(|x|^q, |x|^{-1/2}) dx =: \varepsilon C'.$$
		Note that $C' < \infty$ as $f \in \mathcal{S}(\mathbb{R}^d)$ and $1/2 < d$. Moreover, according to the Lebesgue's dominated convergence theorem $B$ converges to $0$. Indeed, $L(|x|/r)/L(1/r)$ converges to $1$ for $x \neq 0$, and the integrand is upper bounded by
		\begin{align*}
			\mathbf{1}_{|x| \leq br}|\mathcal{F}[f](x)|^2 |x|^{\alpha} \left|\frac{L(|x|/r)}{L(1/r)} -1\right| \leq C |\mathcal{F}[f](x)|^2  |x|^{\alpha} (\max(|x|^q, |x|^{-1/2})+1),
		\end{align*}
		which is $L^1(\mathbb{R}^d)$ since $f \in \mathcal{S}(\mathbb{R}^d)$ and $1/2 < d$. Finally, $D$ converges to $0$ as $r \to \infty$ because $f \in \mathcal{S}(\mathbb{R}^d)$. Accordingly, we have obtained that for all $\varepsilon > 0$, 
		\begin{align*}
			\underset{r \to \infty}{\operatorname{\lim\sup}}\left|\frac{r^{\alpha-d}}{L(1/r)}\operatorname{Var}[T_r^0] - \int_{\mathbb{R}^d} |\mathcal{F}[f](x)|^2|x|^{\alpha} dx\right| \leq \varepsilon C'.
		\end{align*}
		Finally, taking the limit as $\varepsilon \to 0$ gives the result.
	\end{proof}
	
	\medskip \begin{remark}\label{rmk_alpha}
		The previous proposition implies that the variance of $T_r^{0}$ diverges to infinity when $\alpha < d$. Since the parameter $\alpha$ is necessarily smaller than 2 (see Proposition~\ref{prop_lim_al}), it suggests that $\operatorname{Var}[T_r^0]$ should always diverges when $d \geq 3$ with a rate of order at least $r^{d-2} \geq r$. We prove this result in Lemma~\ref{lemma_var_d3} from which the asymptotic normality follows in Theorem~\ref{thm_tcl_var_unbouded}. When $\alpha = d$ and $L$ is bounded, the variance does not diverge as $r \to \infty$ and the previous proof has to be refined (see Theorems~\ref{thm_tcl_var_bouded} and~\ref{thm_no_clt_alpha_1}). Finally, when the variance converges to $0$, i.e. when $\alpha > d$, the previous proofs using the cumulants fail and one has to use different arguments (see Section~\ref{sec_no_clt}). 
	\end{remark}
	
	\medskip \begin{remark}
		The asymptotic scaling in the previous proposition is characteristic of hyperuniform point processes, which are generally classified according to the value of $\alpha$: class~I if $\alpha > 1$, class II if $\alpha = 1$ and class III if $\alpha \in (0, 1)$~\cite{torquato2018hyperuniform}. This classification is explained by the asymptotic behavior of the number variance $\operatorname{Var}\left[\sum_{x \in \Phi^0} \mathbf{1}_{B(0, 1)}(x/r)\right]$ when the slowly varying function $L$ is constant. Indeed, due to the slow decay of the Fourier transform of $\mathbf{1}_{B(0, 1)}$, the number variance scales for large $r$ as $r^{d-1} \log(r)$ for Class II hyperuniform point processes and scales as $r^{d-1}$ for the ones of Class I (which is larger than the rate $r^{d-\alpha}$ when considering smooth functions). 
	\end{remark}
	
	\section{Gaussian limits}\label{sec_gauss}
	
	\subsection{Central limit theorem in dimension $d \geq 3$}\label{sec_clt_inf_d}
	
	In this section, we prove a central limit theorem when the dimension is larger than three, assuming only the non-degeneracy of the perturbations. The proof is based on the method of cumulants and follows from the lemma below. It extends the observations of Remark \ref{rmk_alpha} to any non a.s. constant perturbations: the variance of $T_r^0$ always diverges at rate of order at least $r^{d-2}$.
	
	\medskip \begin{lemma}\label{lemma_var_d3}
		Let $d \geq 1$ and $f \in \mathcal{S}(\mathbb{R}^d)$. We assume that there exists $x_0 \in \mathbb{R}^d$ such that $f(x_0) \neq 0$. We consider i.i.d. random variables $(\xi_x)_{x \in \mathbb{Z}^d}$ that are not a.s. constant. Let $h : (0, \infty) \to (0, \infty)$ satisfying $\lim_{r \to \infty} h(r) = 0$. Then, there exist $C > 0$, and $r_0> 0$ such that for all $r > r_0$:
		$$\operatorname{Var}[T_r^0] \geq C r^{d-2} h(r).$$ 
	\end{lemma}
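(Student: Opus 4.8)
The plan is to prove the slightly stronger bound $\operatorname{Var}[T_r^0] \geq c_0\, r^{d-2}$ for some $c_0 > 0$ and all $r$ large enough; since $h(r) \to 0$, the statement then follows at once by choosing $r_0$ so large that $h(r) \leq c_0/C$ for $r \geq r_0$. I would start from the variance formula of Corollary~\ref{cor_expec_var}, applied with the fixed value $\beta = 2$ (any $\beta > 2-d$ works, and $2-d < 2$ for every $d \geq 1$), which gives $\operatorname{Var}[T_r^0] = r^d \int_{\mathbb{R}^d} |\mathcal{F}[f](x)|^2\,(1 - |\varphi(x/r)|^2)\,dx + O(r^{-2})$. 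Since the integrand is nonnegative, I may restrict the integral to a ball $\{|x| \leq \delta r\}$ for a small $\delta > 0$ to be chosen, so the task reduces to producing a quantitative lower bound for $1 - |\varphi(t)|^2$ valid for all small $|t| \leq \delta$, uniform in the (otherwise arbitrary, possibly heavy-tailed) law of the perturbations.

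For that lower bound I would use the symmetrization identity $1 - |\varphi(t)|^2 = \mathbb{E}\big[1 - \cos(2\pi\, t\cdot\eta)\big]$, where $\eta := \xi - \xi'$ for $\xi,\xi'$ i.i.d.\ with characteristic function $\varphi$; here $\eta$ is symmetric and $\mathbb{P}(\eta \neq 0) > 0$ because the perturbations are not a.s.\ constant. Since $1 - \cos s \geq \kappa s^2$ on $|s| \leq \pi$ for a universal $\kappa > 0$, I pick $R$ large enough that $\mathbb{P}(0 < |\eta| \leq R) > 0$ and set $\delta := 1/(4R)$, so that $|2\pi\, t\cdot\eta| \leq \pi/2$ whenever $|t| \leq \delta$ and $|\eta| \leq R$; restricting the expectation to $\{|\eta| \leq R\}$ then gives $1 - |\varphi(t)|^2 \geq c\, \mathbb{E}\big[(t\cdot\eta)^2\,\mathbf{1}_{|\eta| \leq R}\big]$ for $|t| \leq \delta$, with $c > 0$ absolute. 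Substituting $t = x/r$ and inserting this into the variance formula yields $\operatorname{Var}[T_r^0] \geq c\, r^{d-2}\int_{|x| \leq \delta r} |\mathcal{F}[f](x)|^2\, \mathbb{E}\big[(x\cdot\eta)^2\mathbf{1}_{|\eta|\leq R}\big]\,dx + O(r^{-2})$.

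Finally I would let $r \to \infty$: the truncated integral increases to $I := \int_{\mathbb{R}^d} |\mathcal{F}[f](x)|^2\, \mathbb{E}\big[(x\cdot\eta)^2\mathbf{1}_{|\eta|\leq R}\big]\,dx$, which is finite since $\mathbb{E}[(x\cdot\eta)^2\mathbf{1}_{|\eta|\leq R}] \leq R^2|x|^2$ and $\mathcal{F}[f] \in \mathcal{S}(\mathbb{R}^d)$, and strictly positive: $\mathcal{F}[f]$ is continuous and not identically zero (as $f \not\equiv 0$), so $\{\mathcal{F}[f] \neq 0\}$ is a nonempty open set, whereas the nonnegative quadratic form $x \mapsto \mathbb{E}[(x\cdot\eta)^2\mathbf{1}_{|\eta|\leq R}]$ vanishes only on a proper linear subspace (this uses $\mathbb{P}(0 < |\eta| \leq R) > 0$), hence is strictly positive on a set of full Lebesgue measure; the product is therefore strictly positive on a set of positive measure. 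Thus for $r$ large the truncated integral exceeds $I/2$, and since $r^{-2} = o(r^{d-2})$ the $O(r^{-2})$ term is absorbed, giving $\operatorname{Var}[T_r^0] \geq (cI/4)\, r^{d-2}$, as wanted. I do not expect a serious obstacle here: the only real content is obtaining a lower bound on $1 - |\varphi|^2$ near the origin that is uniform over all non-degenerate perturbation laws — which the truncation at level $R$ achieves — together with the observation that the resulting quadratic form is not annihilated by $|\mathcal{F}[f]|^2$, which is precisely where both hypotheses ($f \not\equiv 0$ and $\xi$ non-degenerate) are used.
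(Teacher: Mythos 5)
Your proposal is correct, and it in fact establishes a strictly stronger statement than the lemma: a bound $\operatorname{Var}[T_r^0]\geq c_0 r^{d-2}$ for all large $r$, with no factor $h(r)$ at all, from which the lemma follows trivially since $h(r)\to 0$. The paper argues differently: it proceeds by contradiction, extracting a sequence $r_n\to\infty$ along which $r_n^{2-d}\operatorname{Var}[T_{r_n}^0]\to 0$, and then applies Fatou's lemma twice --- once in the $x$-integral of the variance representation \eqref{eq_var} and once inside the expectation over the symmetrized perturbation $\xi-\xi'$ --- to conclude that $x\cdot\xi=x\cdot\xi'$ a.s.\ for all $x$ in a ball where $\mathcal{F}[f]\neq 0$, hence that $\xi$ is a.s.\ constant, a contradiction. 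Both arguments rest on the same two ingredients (the Fourier expression \eqref{eq_var} of the variance and the identity $1-|\varphi(t)|^2=\mathbb{E}[1-\cos(2\pi t\cdot(\xi-\xi'))]$, the paper's \eqref{eq_phi_couplage}), and both use the hypotheses in the same way (non-degeneracy makes the limiting quadratic form nonzero, $f\not\equiv 0$ makes $|\mathcal{F}[f]|^2$ charge a set where that form is positive). The difference is that you replace the soft Fatou/contradiction step by a truncation of $\eta=\xi-\xi'$ at a level $R$ with $\mathbb{P}(0<|\eta|\leq R)>0$ together with the elementary bound $1-\cos s\gtrsim s^2$ on $|s|\leq\pi$; this makes the argument direct and quantitative, shows the $h(r)$ in the statement is not actually needed, and handles the possible absence of second moments exactly where the paper handles it implicitly via Fatou. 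Your observation that the kernel of the truncated covariance matrix is a proper subspace (hence Lebesgue-null) is the right way to get positivity of the limiting integral without assuming the form is definite. The only cosmetic point is the constant bookkeeping in your first sentence (it suffices to take the lemma's $C$ equal to your $c_0$ and $r_0$ large enough that $h\leq 1$); this does not affect correctness.
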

	\begin{proof}
		We assume that for all $C > 0$ and $r_0>0$ there exists $r > r_0$ such that $\operatorname{Var}[T_r^0] < C r^{d-2}h(r)$, and seek a contradiction. Accordingly, there exists a sequence $(r_n)_{n \geq 1}$ with $\lim r_n = \infty$ such that for all $n \geq 1$, $\operatorname{Var}[T_{r_n}^0] \leq r_n^{d-2} h(r_n)$. Using equation~\eqref{eq_var} with $\beta = 2$, we get
		\begin{equation*}
			\int_{\mathbb{R}^d} |\mathcal{F}[f](x)|^2 r_n^{2}(1 -|\varphi(x/r_n)|^2) dx \leq C h(r_n) + O(r_n^{-d}).
		\end{equation*}
		Since $1 -|\varphi(x/r_n)|^2 \geq 0$, the Fatou's lemma yields
		\begin{equation}\label{eq_fatou1}
			0 \leq \int_{\mathbb{R}^d} |\mathcal{F}[f](x)|^2 \underset{n \to \infty}{\operatorname{\lim\inf}}~r_n^{2}(1 -|\varphi(x/r_n)|^2) dx \leq 0.
		\end{equation}
		To express $~r_n^{2}(1 -|\varphi(x/r_n)|^2)$ in a convenient way, we introduce $\xi'$ and $\xi$ that are independent random variables with the the same distribution as $\xi_0$. Then, using the independence between $\xi$ and $\xi'$, we obtain
		\begin{align}\label{eq_phi_couplage}
			1 -|\varphi(x/r_n)|^2 &= \Re\left\{1 - \mathbb{E}\left[e^{2\bm{i}\pi (\xi-\xi') \cdot x/r_n}\right]\right\} = 2\mathbb{E}\left[\sin^2\left(\frac{(\xi - \xi') \cdot x}{2r_n}\right)\right].
		\end{align}
		The next step is to use again the Fatou's lemma, to get
		\begin{align*}\label{eq_fatou2}
			\underset{n \to \infty}{\operatorname{\lim\inf}}~ r_n^{2}(1 -|\varphi(x/r_n)|^2) dx & \geq 2 \mathbb{E}\left[\underset{n \to \infty}{\operatorname{\lim\inf}} r_n^2\sin^2\left(\frac{(\xi - \xi') \cdot x}{2r_n}\right)\right] \\& = \frac12\mathbb{E}\left[(x\cdot\xi - x\cdot\xi')^2\right].
		\end{align*}
		The function $f$ is not identically null, so $\mathcal{F}[f]$ takes at least one non zero value at a point $x_1 \in \mathbb{R}^d$. By continuity, there exists $r_1> 0$ such that for all $x \in B(x_1, r_1)$, we have $|\mathcal{F}[f](x)| > 0$. Accordingly, using~\eqref{eq_fatou1} and the previous inequality, we obtain:
		\begin{equation}\label{eq_lower_bound_var_forall}
			\forall x \in B(x_1, r_1),~x\cdot\xi = x\cdot\xi',~a.s.
		\end{equation}
		Let $(e_i)_{1 \leq i \leq d}$ be the canonical Euclidean basis of $\mathbb{R}^d$. Using~\eqref{eq_lower_bound_var_forall} with $x = x_1 + r_1 e_i/2$ for $i \in \{1, \dots, d\}$, we thus obtain $\xi = \xi'$ a.s. To conclude, recall that $\xi$ and $\xi'$ are independent. Subsequently, $\xi$ is a.s. constant, which is a contradiction and finishes the proof.
	\end{proof}
	
	The previous lemma was the last tool needed to prove the following central limit theorem in dimension $d \geq 3$. Its proof relies on the expression of the cumulants of $T_r^0$ obtained in Proposition~\ref{prop_kappa}, that is used as a Brillinger-mixing condition~\cite{heinrich2016strong, biscio2016brillinger}.

	\medskip \begin{theorem}\label{thm_tcl_var_unbouded}
		Let $d \geq 3$ and $f \in \mathcal{S}(\mathbb{R}^d)$. We assume that there exists $x_0 \in \mathbb{R}^d$ such that $f(x_0) \neq 0$. We consider i.i.d. random variables $(\xi_x)_{x \in \mathbb{Z}^d}$ that are not a.s. constant. Then:
		$$\operatorname{Var}[T_r^0]^{-1/2} \left(T_r^0 - r^d \int_{\mathbb{R}^d} f(x) dx \right)\xrightarrow[r \to \infty]{d} \mathcal{N}\left(0, 1\right).$$
	\end{theorem}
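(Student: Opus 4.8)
The plan is to argue by the method of cumulants, using that for the non-stationary lattice $\Phi^0$ the statistic $T_r^0=\sum_{x\in\mathbb{Z}^d}Y_x$ is a sum of the \emph{independent} uniformly bounded random variables $Y_x:=f((x+\xi_x)/r)$, $|Y_x|\le\|f\|_{\infty}$. Put $\sigma_r^2:=\operatorname{Var}[T_r^0]$ and $W_r:=\sigma_r^{-1}(T_r^0-\mathbb{E}[T_r^0])$, so that $\kappa_1(W_r)=0$, $\kappa_2(W_r)=1$, and $\kappa_m(W_r)=\sigma_r^{-m}\kappa_m(T_r^0)$ for $m\ge2$. It suffices to prove $\kappa_m(W_r)\to0$ for every $m\ge3$: since all moments of $T_r^0$ are finite (Lemma~\ref{lemma_moment}), the moments of $W_r$ are polynomials in its cumulants $\kappa_1(W_r),\dots,\kappa_m(W_r)$, which converge (to $0,1,0,\dots,0$), hence the moments of $W_r$ converge to those of $\mathcal{N}(0,1)$, which is moment determinate, and the method of moments yields $W_r\xrightarrow{d}\mathcal{N}(0,1)$. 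Replacing $\mathbb{E}[T_r^0]$ by $r^d\int_{\mathbb{R}^d}f(x)\,dx$ is harmless: by Corollary~\ref{cor_expec_var} the difference is $O(r^{-\beta})$, so once $\sigma_r\to\infty$ it is $o(\sigma_r)$ and Slutsky's lemma converts the above into the statement as written.

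The first key estimate is the cumulant bound $|\kappa_m(T_r^0)|\le C_m\,\sigma_r^2$ for every $m\ge2$, with $C_m$ depending only on $m$ and $\|f\|_{\infty}$. By multilinearity of the cumulants and the independence of the perturbations—exactly the reduction performed at the start of the proof of Proposition~\ref{prop_kappa}—one has $\kappa_m(T_r^0)=\sum_{x\in\mathbb{Z}^d}\kappa_m(Y_x)$, while independence also gives $\sigma_r^2=\sum_{x\in\mathbb{Z}^d}\operatorname{Var}(Y_x)$. For a single random variable $Y$ with $|Y|\le M$ there is the classical bound $|\kappa_m(Y)|\le C_m(2M)^{m-2}\operatorname{Var}(Y)$: expanding $\kappa_m(Y)=\kappa_m(Y-\mathbb{E}Y)$ over set partitions via \eqref{def_kappa}, the partitions having a singleton block drop out (the first centered moment vanishes), and each surviving term is controlled using $|\mathbb{E}(Y-\mathbb{E}Y)^k|\le(2M)^{k-2}\operatorname{Var}(Y)$ for $k\ge2$ together with $\operatorname{Var}(Y)\le M^2$. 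Summing over $x\in\mathbb{Z}^d$ then gives $|\kappa_m(T_r^0)|\le C_m(2\|f\|_{\infty})^{m-2}\sigma_r^2$.

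The second ingredient is $\sigma_r^2\to\infty$, which is precisely where the hypothesis $d\ge3$ enters. Applying Lemma~\ref{lemma_var_d3} with $h(r)=1/\log(e+r)$, admissible since $h(r)\to0$, gives $\sigma_r^2\ge C\,r^{d-2}/\log(e+r)$, and this diverges because $d\ge3$. Combining with the cumulant bound, $|\kappa_m(W_r)|=\sigma_r^{-m}|\kappa_m(T_r^0)|\le C_m(2\|f\|_{\infty})^{m-2}\,\sigma_r^{2-m}\to0$ for every $m\ge3$, which is all that was required.

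The one point calling for genuine care is to keep the cumulant estimate in the sharp form $|\kappa_m(T_r^0)|=O(\sigma_r^2)$, obtained through the boundedness of $f$ and the independence of the perturbations: the cruder Brillinger-type bound $|\kappa_m(T_r^0)|=O(r^d)$ that one reads at once from Proposition~\ref{prop_kappa} would, combined with $\sigma_r^2\gtrsim r^{d-2}$, only close the argument in large dimension (already $m=3$ would force $d>6$), whereas the bound against $\sigma_r^2$ settles every $d\ge3$ uniformly. There is otherwise no substantial obstacle, all the content being contained in Lemma~\ref{lemma_var_d3} and the cumulant machinery of Section~\ref{sec_exp_var}; one may alternatively observe that, $T_r^0$ being a sum of uniformly bounded independent summands with $\sigma_r\to\infty$, the conclusion is also a direct instance of the Lindeberg--Feller theorem—the Lindeberg ratio being identically zero for $r$ large—the cumulant route being retained for consistency with the low-dimensional and stationary cases treated later.
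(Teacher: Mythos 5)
Your proof is correct, and its key estimate is genuinely different from the paper's. The paper bounds the cumulants through the Fourier representation of Proposition~\ref{prop_kappa} and Young's inequality, which only yields $|\kappa_m(T_r^0)|\le c_m r^d$; combined with the lower bound $\operatorname{Var}[T_r^0]\ge C r^{d-2-1/2}\ge Cr^{1/2}$ from Lemma~\ref{lemma_var_d3} (applied with $h(r)=r^{-1/2}$), this makes the normalized cumulants vanish only for $m>4d$, and the argument is closed by the Marcinkiewicz-type lemma (Lemma~3 of~\cite{soshnikov2002gaussian}), which does not require every cumulant of order $\ge 3$ to vanish; so your remark that the crude $O(r^d)$ bound would ``only close the argument for $d>6$'' is accurate only if one insists on killing all cumulants, which the paper avoids. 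Your route instead exploits that, for the non-stationary lattice $\Phi^0$, $T_r^0$ is a sum of independent summands bounded by $\|f\|_{\infty}$, giving the sharper bound $|\kappa_m(T_r^0)|\le C_m(2\|f\|_{\infty})^{m-2}\operatorname{Var}[T_r^0]$ (your derivation of the classical bounded-variable cumulant bound is sound), after the same independence reduction $\kappa_m(T_r^0)=\sum_{x}\kappa_m\bigl(f((x+\xi_x)/r)\bigr)$ that opens the proof of Proposition~\ref{prop_kappa}; then divergence of the variance — which you get from Lemma~\ref{lemma_var_d3} with $h(r)=1/\log(e+r)$, just as legitimately as the paper's choice — kills all cumulants of order $\ge 3$, and the plain method of moments (Gaussian moment determinacy) concludes, with the recentering handled by Corollary~\ref{cor_expec_var} and Slutsky exactly as in the paper. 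What you gain is elementarity: no Marcinkiewicz step, no Young's-inequality bound, and indeed, as you observe, the statement for $\Phi^0$ is essentially a classical CLT for sums of independent bounded variables via Lindeberg--Feller (with the routine caveat that the sum over $\mathbb{Z}^d$ is infinite, so one should truncate to a finite box whose complement has negligible variance before invoking the triangular-array theorem). What the paper's route buys is uniformity of machinery: the $O(r^d)$ cumulant bound and the Fourier representation are exactly the tools reused in dimensions $1$ and $2$, where the variance need not diverge and the bounded-independent-summand shortcut no longer closes the argument.
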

	\begin{proof}
		We denote $Y_r := \operatorname{Var}[T_r^0]^{-1/2}(T_r^0 - r^d \int_{\mathbb{R}^d} f(x)dx)$. Equation~\eqref{eq_exp} of Corollary~\ref{cor_expec_var} ensures that its first cumulant converges to $0$. Its second cumulant equals $1$. Concerning the higher order cumulants, according to Proposition~\ref{prop_kappa}:
		$$\forall m \geq 3,~\kappa_m\left(T_r^0\right) = \sum_{n = 1}^m (-1)^{n-1} (n-1)! \sum_{B_1, \dots, B_n}  \circledast_{i = 1}^n \{\overline{\mathcal{F}[f_r^{|B_i|}]}\varphi\}(0) + O(r^{-1}).$$
		To bounds these cumulant, we use the Young's inequality for the convolution and the fact that $|\varphi| \leq 1$. Indeed, for $n \geq 2$, we have
		\begin{align*}
			\left|\circledast_{i = 1}^n\{\overline{\mathcal{F}[f_r^{|B_i|}]}\varphi\}(0)\right| &\leq \|\mathcal{F}[f_r^{|B_1|}] \varphi\|_{\infty} \prod_{i = 2}^n \|\mathcal{F}[f_r^{|B_i|}]  \varphi\|_1 \\& \leq r^d \|\mathcal{F}[f^{|B_1|}]\|_{\infty} \prod_{i = 2}^n \|\mathcal{F}[f^{|B_i|}]\|_1.
		\end{align*}
		Moreover, a similar bound holds for $n = 1$, 
		$$\left|\circledast_{i = 1}^1\{\overline{\mathcal{F}[f_r^{|B_i|}]}\varphi\}(0)\right| = \left|\mathcal{F}[f_r^{|B_i|}](0)\right|\leq r^d \|f^{|B_i|}\|_1.$$
		Gathering both previous inequalities, we have proven that for all $m \geq 3$ there exists a constant $c_m < \infty$, that does not depend on $r$, such that
		\begin{equation}\label{eq_bound_kappa_m}
			\forall r > 0, |\kappa_m(T_r^0)| \leq c_m r^d.
		\end{equation}
		The last step is to lower bound the variance. To do so, we use Corollary~\ref{lemma_var_d3} with $h(r) = r^{-1/2}$. Subsequently, there exist $C, r_0> 0$ such that for all $r \geq r_0$,  $$\operatorname{Var}[T_r^0] \geq C r^{d-2} r^{-1/2} \geq C r^{1/2}.$$ Finally, for $m \geq 3$, we have,  
		$$\left|\kappa_m(\operatorname{Var}[T_r^0]^{-1/2} T_r^0)\right| \leq C_m r^{d - m/4} +O(r^{-3/2}),$$ 
		for some $C_m < \infty$. Since the right hand side of the previous inequality converges to $0$ as $r\to \infty$ when $m > 4d$, this concludes the proof recalling the classical result of Marcinkiewicz (see, e.g. Lemma~3 of~\cite{soshnikov2002gaussian}).
	\end{proof}
	
	\subsection{Central limit theorem in dimension $d = 2$}\label{sec_alpha_2}
	
	In dimension smaller than 2 the variance of $T_r^0$ can be bounded and we need a finer control of the cumulants than the one used in the proof of Theorem~\ref{thm_tcl_var_unbouded}. In Section~\ref{sec:clt_l2}, we prove a central limit theorem with a bounded rate of convergence, a case that occurs when the perturbations admit a moment of order $2$ (see Proposition \ref{prop_var_bounded}). Then, in Section~\ref{sec:clt_ht}, we examine the case of general perturbations with unbounded variance in dimension $2$. We establish a central limit theorem for a subsequence of $T_r^0$. The extension to the full sequence is addressed in Section~\ref{sec:clt_rv_phi} with a mild assumption on the behavior of the characteristic function of the perturbations near zero.
	
	\subsubsection{The case of square integrable perturbations}\label{sec:clt_l2}
	
	To prove normality results using the method of cumulants when the considered sequence of random variables has an asymptotically bounded variance, one may rely on asymptotic cancellations between terms. These simplifications, of combinatoric nature, are ensured by Lemma~\ref{lemma_cmb}. Note that equation~\eqref{eq_cmb_1} could be proved easily by noticing that it equals $\kappa_m(\bm{1})$ where $\bm{1}$ denotes the random variable a.s. equal to $1$. However, in order to prove equations~\eqref{eq_cmb_2} and~\eqref{eq_cmb_3}, we recall a general procedure, using algebraic cumulants, defined as the coefficients of the composition $\log(h(x))$ of the two (formal) entire series $\log$ and $h$. 
	\medskip \begin{lemma}\label{lemma_cmb}
		For all $m \geq 3$, 
		\begin{equation}\label{eq_cmb_1}
			\sum_{n = 1}^m (-1)^{n-1} (n-1)! \sum_{B_1, \dots, B_n} 1 = 0,
		\end{equation}
		\begin{equation}\label{eq_cmb_2}
			\sum_{n = 1}^m (-1)^{n-1} (n-1)! \sum_{B_1, \dots, B_n}  \sum_{i = 1}^n |B_i|^2 = 0,
		\end{equation}
		\begin{equation}\label{eq_cmb_3}
				\sum_{n = 1}^m (-1)^{n-1} (n-1)! \sum_{B_1, \dots, B_n} \sum_{i \neq j} |B_i||B_j| = 0,
		\end{equation}
	where we recall that $\sum_{B_1, \dots, B_n}$ denotes the sum over all the partitions of $\{1, \dots, m\}$ into $n$ sets $B_1,\dots, B_n$ (see Section \ref{sec_notations}).
	\end{lemma}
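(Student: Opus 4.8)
The plan is to recognize all three identities as instances of a single mechanism: the quantities $\sum_{n=1}^m (-1)^{n-1}(n-1)! \sum_{B_1,\dots,B_n} (\cdot)$ are precisely the $m$-th algebraic cumulants of a formal power series, and choosing that series appropriately makes each sum vanish for $m \geq 3$. Concretely, recall that if $h(x) = 1 + \sum_{j\geq 1} \mu_j x^j/j!$ is a formal power series (the "moment generating function"), then the coefficients $\kappa_m$ in $\log h(x) = \sum_{m\geq 1}\kappa_m x^m/m!$ are given by exactly the combinatorial formula $\kappa_m = \sum_{n=1}^m(-1)^{n-1}(n-1)!\sum_{B_1,\dots,B_n}\prod_{i=1}^n \mu_{|B_i|}$, which is the algebraic analogue of \eqref{def_kappa}. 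So all three statements reduce to: for a suitable choice of the sequence $(\mu_j)_{j\geq1}$, the associated algebraic cumulants $\kappa_m$ vanish for $m\geq 3$.

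First, for \eqref{eq_cmb_1}, take $\mu_j = 1$ for all $j$, so $h(x) = \sum_{j\geq 0} x^j/j! = e^x$; then $\log h(x) = x$, whose coefficients $\kappa_m$ are zero for all $m \geq 2$, in particular for $m \geq 3$. (Equivalently, this is $\kappa_m(\mathbf{1})$ for the constant random variable $\mathbf{1}$, as the authors note.) Next, for \eqref{eq_cmb_2}, I want a choice of $(\mu_j)$ whose algebraic cumulants $\kappa_m$ collapse for $m\geq 3$ to the expression $\sum_{i=1}^n |B_i|^2$ summed with signs — i.e. I want to engineer $\prod_i \mu_{|B_i|}$ to "produce" a factor $\sum_i |B_i|^2$. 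The clean way is to differentiate a one-parameter family: consider $h_t(x) = e^{g(tx)}$ for an auxiliary series $g$, extract the cumulant identity, and differentiate in $t$ at $t=0$; since $\log h_t(x) = g(tx)$ has cumulants $t^m g_m$ (where $g_m$ are the coefficients of $g$), these stay linear in the $g_m$ and hence vanish for $m \geq 3$ once $g$ is a polynomial of low degree, while the $t$-derivatives on the moment side generate the factors $|B_i|$. More directly: take $h(x) = e^{ax + bx^2/2}$ so that $\log h(x) = ax + bx^2/2$ has $\kappa_m = 0$ for $m \geq 3$; expanding $h(x) = \sum_j \mu_j x^j/j!$ gives $\mu_j = \mathbb{E}[(aN+b)^{\cdots}]$-type Hermite-ish expressions, and reading off the coefficient of $b$ (resp. $b^2$, resp. $ab$) in the identity $\sum_n (-1)^{n-1}(n-1)!\sum_{B_1,\dots,B_n}\prod_i\mu_{|B_i|} = 0$ isolates exactly \eqref{eq_cmb_2} and \eqref{eq_cmb_3}. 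In other words, \eqref{eq_cmb_2} and \eqref{eq_cmb_3} are obtained by differentiating the Gaussian cumulant identity with respect to the variance parameter.

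The cleanest self-contained route I would actually write: let $N$ be a standard Gaussian and set $\mu_j := \mathbb{E}[N^j]$, so $h(x) = \mathbb{E}[e^{xN}] = e^{x^2/2}$, $\log h(x) = x^2/2$, hence the algebraic cumulants satisfy $\kappa_m = 0$ for all $m \neq 2$; this is the statement $\sum_{n=1}^m(-1)^{n-1}(n-1)!\sum_{B_1,\dots,B_n}\prod_{i=1}^n\mathbb{E}[N^{|B_i|}] = 0$ for $m \geq 3$. Now replace $N$ by $\sigma N$ with variance $\sigma^2$: then $\mu_j$ becomes $\sigma^j\mathbb{E}[N^j]$, and $\prod_i \mathbb{E}[(\sigma N)^{|B_i|}] = \sigma^{m}\prod_i\mathbb{E}[N^{|B_i|}]$, which is just an overall $\sigma^m$ — not yet enough. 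Instead introduce a bona fide two-parameter family, $X = aN$ for scalar $a$, and note the identity holds with $\mu_j = \mathbb{E}[X^j] = a^j\mathbb{E}[N^j]$ replaced termwise; differentiating $\sum_n(-1)^{n-1}(n-1)!\sum_{B_1,\dots,B_n}\prod_i a^{|B_i|}\mathbb{E}[N^{|B_i|}] \equiv 0$ in $a$ once and evaluating at $a=1$ brings down a factor $\sum_i |B_i|$, differentiating twice brings down $\sum_i|B_i|^2 + \sum_{i\neq j}|B_i||B_j| = (\sum_i|B_i|)^2 = m^2$; combined with \eqref{eq_cmb_1} (multiplied by $m^2$) and the first-derivative identity these linear combinations yield both \eqref{eq_cmb_2} and \eqref{eq_cmb_3} separately. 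The main obstacle — really the only delicate point — is bookkeeping: making sure the formal-power-series manipulation of $\log h$ genuinely has vanishing coefficients in degrees $\geq 3$ (this is immediate once $\log h$ is a polynomial of degree $\leq 2$) and that the differentiation-in-parameter correctly produces the weighted partition sums $\sum_i|B_i|^2$ and $\sum_{i\neq j}|B_i||B_j|$ without cross terms; this is elementary but needs care. No analytic input is needed at all — everything is formal power series / combinatorial identities — so there is no convergence or estimation difficulty.
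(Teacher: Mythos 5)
Your general mechanism is the right one, and it is essentially the paper's: both identify the sums as algebraic cumulants via the formal moment--cumulant (Fa\`a di Bruno) relation and choose a series whose logarithm has degree at most $2$, then extract coefficients in an auxiliary parameter. Your intermediate sketch with $h(x)=e^{ax+bx^2/2}$ does work: the identity $\sum_n(-1)^{n-1}(n-1)!\sum_{B_1,\dots,B_n}\prod_i\mu_{|B_i|}(a,b)=0$ for $m\geq 3$ holds as a polynomial identity in $(a,b)$, the coefficient of $a^{m-2}b$ in $\prod_i\mu_{|B_i|}$ is $\sum_i\binom{|B_i|}{2}$, which together with \eqref{eq_cmb_1} gives \eqref{eq_cmb_2}, and \eqref{eq_cmb_3} then follows from the pointwise identity $\sum_i|B_i|^2+\sum_{i\neq j}|B_i||B_j|=m^2$. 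The paper does the same thing with the family $a_k=tk+1$, i.e.\ $h(x)=e^x(1+tx)$, reading off the coefficients of $t^0$ and $t^2$ (the latter being $\sum_{i<j}|B_i||B_j|$), so this part of your proposal is a legitimate, equivalent variant.

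However, the route you say you ``would actually write'' has a genuine gap. For the scaled Gaussian $X=aN$ you have $\prod_i a^{|B_i|}\mathbb{E}[N^{|B_i|}]=a^{m}\prod_i\mathbb{E}[N^{|B_i|}]$, because $\sum_i|B_i|=m$ for \emph{every} partition. Hence differentiating the identity in $a$ at $a=1$ only multiplies each partition's contribution by the partition-independent constants $m$ (first derivative) and $m(m-1)$ (second derivative); the resulting identities are just scalar multiples of \eqref{eq_cmb_1} and carry no new information. In particular they cannot yield \eqref{eq_cmb_2} and \eqref{eq_cmb_3} ``separately'': the sum \eqref{eq_cmb_2}$+$\eqref{eq_cmb_3} equals $m^2$ times \eqref{eq_cmb_1} and is trivially zero, so the entire content of the lemma lies in separating the two, which a one-parameter scaling family is structurally blind to (it acts only through the total degree $m$, never through the block sizes individually). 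You half-noticed this when you dismissed the $\sigma N$ substitution, but $X=aN$ is the same one-parameter family. To make the proof correct you must go back to a genuinely two-parameter choice such as your $e^{ax+bx^2/2}$, or the paper's $e^x(1+tx)$, where the coefficient of the second parameter is a quantity like $\sum_i\binom{|B_i|}{2}$ or $\sum_{i<j}|B_i||B_j|$ that actually varies with the partition.
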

	\begin{proof}
		The proof of this result is done in Lemma 9 of~\cite{rider2006complex} but with slightly different notations. For completeness, we recall the arguments, that are based on Faà-Di-Bruno formula. Let $h(x) = \sum_{k \geq 1} \frac{a_k}{k!} x^k$ be an entire series. Then, when it is well defined, the coefficients $c_m$ of the entire series $\log(h(x))$ can be computed~\cite{Stanley_Fomin_1999}:
		$$\forall m \geq 1,~c_m = \sum_{n = 1}^m (-1)^{n-1} (n-1)! \sum_{B_1, \dots, B_n} \prod_{i = 1}^n a_{|B_i|}.$$
		To exploit this relation, we consider $t \in (0, 1)$ and choose $a_k = t k +1$, that corresponds to $h(x) = e^{x}(tx+1)$. As a consequence, we obtain the entire series expansion of $\log(h(x))$:
		$$\forall |x| < t^{-1}, \log(h(x)) = x + \sum_{m = 1}^{\infty} \frac{(-1)^{m-1} t^m x^m}{m}.$$
		By uniqueness of the coefficients of this entire series, we have, for all $m \geq 2$, and $t \in (0, 1)$:
		$$\sum_{n = 1}^m (-1)^{n-1} (n-1)! \sum_{B_1, \dots, B_n} \prod_{i = 1}^n (t |B_i|+1) =  \frac{(-1)^{m-1}}{m} t^m.$$
		By identifying the constant coefficient of the previous polynomials in $t$, we obtain~\eqref{eq_cmb_1}. When $m \geq 3$, by identifying the coefficient of $t^2$ of both sides, we get~\eqref{eq_cmb_3}:
		\begin{equation*}
			\sum_{n = 1}^m (-1)^{n-1} (n-1)! \sum_{B_1, \dots, B_n} \sum_{i \neq j} |B_i||B_j| = 0.
		\end{equation*}
		Finally, using~\eqref{eq_cmb_1} and~\eqref{eq_cmb_3}, we obtain~\eqref{eq_cmb_2}.
	\end{proof}
	
	The next theorem states that in dimension $d = 2$ and for perturbations having a moment of order $2$, a central limit theorem is satisfied by $T_r^0$, even if its variance is asymptotically bounded. Its proof consists in the Taylor expansion at order 2 of the characteristic function of the perturbations. This allows to compute the limits of the cumulants of $T_r^0$ of all orders $m \geq 1$. Then using the previous lemma we show that all limits for $m \geq 3$ are in fact zero.
	
	\medskip \begin{theorem}\label{thm_tcl_var_bouded}
		We assume that $d = 2$ and consider i.i.d. random variables $(\xi_x)_{x \in \mathbb{Z}^2}$, $\xi$ and $\xi'$. We suppose that $\mathbb{E}[|\xi_0|^2]< \infty$. Let $f \in \mathcal{S}(\mathbb{R}^2)$. Then:
		$$T_r^0 - r^2\int_{\mathbb{R}^2} f(x) dx \xrightarrow[r \to \infty]{d} \mathcal{N}\left(0, \frac12\int_{\mathbb{R}^2} |\mathcal{F}[f](x)|^2 \mathbb{E}[|(\xi - \xi') \cdot x|^2] dx\right).$$
	\end{theorem}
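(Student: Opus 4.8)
The plan is to apply the method of cumulants to $Y_r := T_r^0 - r^{2}\int_{\mathbb{R}^2} f(x)\,dx$: I will show that $\kappa_1(Y_r)\to 0$, that $\kappa_2(Y_r)=\operatorname{Var}[T_r^0]$ converges to the announced variance, and that $\kappa_m(Y_r)=\kappa_m(T_r^0)\to 0$ for every $m\ge 3$. Since a Gaussian law is determined by its moments, convergence of all cumulants then forces $Y_r\xrightarrow{d}\mathcal{N}(0,\sigma^2)$ (method of moments; see Lemma~3 of~\cite{soshnikov2002gaussian}). The cases $m=1,2$ are immediate from Corollary~\ref{cor_expec_var}: equation~\eqref{eq_exp} gives $\kappa_1(Y_r)=O(r^{-\beta})\to 0$, while equation~\eqref{eq_var}, the identity~\eqref{eq_phi_couplage} for $1-|\varphi(x/r)|^2$, and the dominated convergence theorem — legitimate since $\mathbb{E}[|\xi_0|^2]<\infty$ and $f\in\mathcal{S}(\mathbb{R}^2)$ make $|x|^2|\mathcal{F}[f](x)|^2$ an integrable majorant — yield $\operatorname{Var}[T_r^0]\to\frac12\int_{\mathbb{R}^2}|\mathcal{F}[f](x)|^2\,\mathbb{E}[|(\xi-\xi')\cdot x|^2]\,dx$. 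The heart of the proof is therefore the vanishing of the cumulants of order $\ge 3$.

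For $m\ge 3$ I start from Proposition~\ref{prop_kappa} with $\beta=1$ (the $O(r^{-1})$ remainder is harmless) and rescale each convolution term. Using $\mathcal{F}[f_r^{p}]=r^{2}\,\mathcal{F}[f^{p}](r\,\cdot)$, the substitution $u_i=rk_i$ gives
$$\circledast_{i=1}^n\{\overline{\mathcal{F}[f_r^{|B_i|}]}\,\varphi\}(0)=r^{2}\!\int_{u_1+\dots+u_n=0}\ \prod_{i=1}^n\overline{\mathcal{F}[f^{|B_i|}](u_i)}\,\varphi(u_i/r)\,d\mu(u),$$
the integral being over the hyperplane $\{u_1+\dots+u_n=0\}$ with its Lebesgue measure. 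Since $\mathbb{E}[|\xi_0|^2]<\infty$, one has both the Taylor expansion $\varphi(t)=1+2\pi\bm{i}\,t\cdot\mathbb{E}[\xi]-2\pi^2\,\mathbb{E}[(t\cdot\xi)^2]+o(|t|^2)$ as $t\to 0$ and the global bounds $|\varphi(t)-1|\le C|t|$, $|\varphi(t)-1-2\pi\bm{i}\,t\cdot\mathbb{E}[\xi]|\le C|t|^2$. Expanding $\prod_i\varphi(u_i/r)=\prod_i\bigl(1-\rho(u_i/r)\bigr)$ in powers of $\rho(u_i/r):=1-\varphi(u_i/r)$ and multiplying by $r^{2}$ produces: (a) a constant term equal to $r^{2}\int_{\mathbb{R}^2}f^{m}$, independent of the partition; (b) the term linear in the $\rho$'s, whose $O(1/r)$-part is proportional to $\sum_i u_i\cdot\mathbb{E}[\xi]$ and hence vanishes identically on the integration hyperplane, its $O(1/r^2)$-part surviving; (c) the term bilinear in the $\rho$'s, with leading part $-4\pi^2\sum_{i<j}(u_i\cdot\mathbb{E}[\xi])(u_j\cdot\mathbb{E}[\xi])$; and higher-order pieces which are $O(1/r)$ times a polynomial in the $u_i$'s and therefore vanish after integration against the Schwartz weight $\prod_i\overline{\mathcal{F}[f^{|B_i|}]}$, by dominated convergence.

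Contribution (a), after the alternating sum $\sum_{n}(-1)^{n-1}(n-1)!\sum_{B_1,\dots,B_n}$, is zero for $m\ge 3$ by~\eqref{eq_cmb_1}. For (b) and (c) I pass to the limit and identify the resulting integrals via the elementary Fourier correspondence between a monomial in $u_i$ and a derivative of $f^{|B_i|}$, together with $\circledast_j\overline{\mathcal{F}[g_j]}(0)=\int_{\mathbb{R}^2}\prod_j g_j$: the surviving part of (b) contributes, for each block $B_i$, a multiple of $\int f^{\,m-|B_i|}\,\partial_a\partial_b(f^{|B_i|})$, and (c) contributes, for each pair $B_i,B_j$, a multiple of $\int f^{\,m-|B_i|-|B_j|}\,(\partial_a f^{|B_i|})(\partial_b f^{|B_j|})$. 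Integration by parts and the Leibniz rule reduce both to the single functional $\int f^{m-2}(\partial_a f)(\partial_b f)$, with combinatorial weights $-|B_i|(m-|B_i|)$ and $|B_i||B_j|$ respectively. Adding them up, the limit of the bracket for a fixed partition $B_1,\dots,B_n$ equals a constant depending only on $f$ and $\operatorname{Cov}(\xi)$ times $\sum_i|B_i|(m-|B_i|)=m^2-\sum_i|B_i|^2$; hence the weighted alternating sum over partitions vanishes by~\eqref{eq_cmb_1} and~\eqref{eq_cmb_2}, so $\lim_{r\to\infty}\kappa_m(T_r^0)=0$ for all $m\ge 3$, completing the argument.

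The main obstacle is the bookkeeping in the last two steps: performing the second-order expansion of $\prod_i\varphi(u_i/r)$ with only two finite moments on $\xi$ (handled through the two global bounds above and the rapid decay of the Fourier weights, which supply the majorants for dominated convergence), and — more importantly — organizing it so that the two a priori divergent contributions cancel for independent reasons, the $r^{2}$-divergence by the purely combinatorial identity~\eqref{eq_cmb_1} and the $r^{1}$-divergence by the geometric constraint $u_1+\dots+u_n=0$ (equivalently, because $\int f^{m-k}\partial_a(f^k)=\frac{k}{m}\int\partial_a(f^m)=0$), and finally checking that the genuinely partition-dependent surviving limit collapses, after the Fourier reduction and integration by parts, to the single block-size functional $\sum_i|B_i|(m-|B_i|)$, whose weighted alternating sum vanishes by Lemma~\ref{lemma_cmb}.
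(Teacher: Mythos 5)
Your proposal is correct and follows essentially the same route as the paper's proof: the cumulant method based on Proposition~\ref{prop_kappa}, a second-order expansion of $\varphi$ (using $\mathbb{E}[|\xi_0|^2]<\infty$) inside the convolution integrals with dominated convergence supplied by the Schwartz weights, and cancellation of the surviving partition-dependent limits via Lemma~\ref{lemma_cmb}. The only differences are cosmetic bookkeeping: you kill the first-order (mean) terms through the constraint $u_1+\dots+u_n=0$ and collapse the limit to the single weight $\sum_i|B_i|(m-|B_i|)$, needing only~\eqref{eq_cmb_1}--\eqref{eq_cmb_2}, whereas the paper kills each gradient term $\tilde A_j$ individually by integration by parts and invokes all three identities~\eqref{eq_cmb_1}--\eqref{eq_cmb_3} — an equivalence you yourself note.
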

	\begin{proof}
		Equation~\eqref{eq_exp} of Corollary~\ref{cor_expec_var} ensures that the first cumulant vanishes as $r \to \infty$: $$\lim_{r \to \infty} \kappa_1(T_r^0 - r^2\int_{\mathbb{R}^2} f(x) dx) = 0.$$
		Concerning the second order cumulant, according to equation~\eqref{eq_var} of Corollary~\ref{cor_expec_var} and the computation~\eqref{eq_phi_couplage}, we have
		$$\kappa_2(T_r^0) = 2 \int_{\mathbb{R}^2} |\mathcal{F}[f](x)|^2\mathbb{E}\left[r^2\sin^2\left(\frac{(\xi - \xi') \cdot x}{2r}\right)\right] dx.$$
		Moreover, 
		$$|\mathcal{F}[f](x)|^2r^2\sin^2\left(\frac{(\xi - \xi') \cdot x}{2r}\right) \leq \frac14|\mathcal{F}[f](x)|^2 |x|^2 |\xi - \xi'|^2,$$
		which is integrable with respect to the measure $dx \otimes d\mathbb{P}$ since $\xi$ and $\xi'$ are square integrable. By the Lebesgue's dominated convergence theorem, we thus obtain
		\begin{equation}\label{eq_lim_k2}
			\lim_{r \to \infty} \kappa_2(T_r^0) =  \frac12 \int_{\mathbb{R}^2} |\mathcal{F}[f](x)|^2\mathbb{E}\left[|(\xi - \xi') \cdot x|^2\right] dx.
		\end{equation}
		It remains to prove that for all $m \geq 3$, $\kappa_m(T_r^0) \to 0$ as $r \to \infty$. According to Proposition~\ref{prop_kappa}, we can express these cumulants as
		$$\kappa_m(T_r^0) =\sum_{n = 1}^m (-1)^{n-1} (n-1)! \sum_{B_1, \dots, B_n}  \circledast_{i = 1}^n \{\overline{\mathcal{F}[f_r^{|B_i|}]}\varphi\}(0) + o(1),$$
		where $f_r(x)= f(x/r)$ and $\varphi_r(x) = \varphi(x/r)$. For $n \geq 2$, using the changes of variable $x_i \leftrightarrow r x_i$, we have
		$$\circledast_{i = 1}^n\{\overline{\mathcal{F}[f_r^{|B_i|}]}\varphi\}(0) =r^2 \circledast_{i = 1}^n\{\overline{\mathcal{F}[f_r^{|B_i|}]}\varphi_r\}(0).$$
		The next step is to write $\varphi(x/r) = 1+(\varphi(x/r) - 1)$ and to expand the product over $i$:
		\begin{align*}
			 \prod_{i = 1}^n &\varphi\left(\frac{x_i - x_{i-1}}r\right) = 1+ \sum_{q = 1}^n  \sum_{S_q} \prod_{s \in S_q} \left(\varphi\left(\frac{x_s - x_{s-1}}r\right)-1\right),
		\end{align*}
		where $\sum_{S_q}$ denotes the sum over the sets $S_q$ of $\{1, \dots, n\}$ of cardinality $|S_q| = q$. In the following, we denote $\mu_i = 1$ if $i \in S_q$ and $0$ otherwise and use the notation $(\varphi_r -1)^{0} = 1$. Then, the previous expansion and the Fourier inversion theorem give that for $n \geq 2$, 
		\begin{align*}
			\circledast_{i = 1}^n \{\overline{\mathcal{F}[f_r^{|B_i|}]}\varphi_r\}(0) &=\circledast_{i = 1}^n \{\mathcal{F}[f^{|B_i|}]\}(0)  + \sum_{q = 1}^n  \sum_{S_q} \circledast_{i = 1}^n \{\overline{\mathcal{F}[f_r^{|B_i|}]}(\varphi_r-1)^{\mu_i}\}(0) \\& = \int_{\mathbb{R}^2} f^m(x) dx + \sum_{q = 1}^n  \sum_{S_q}\circledast_{i = 1}^n \{\overline{\mathcal{F}[f_r^{|B_i|}]}(\varphi_r-1)^{\mu_i}\}(0).
		\end{align*}
		Using equation~\eqref{eq_cmb_1} of Lemma~\ref{lemma_cmb} we get the next intermediate expression for the cumulants:
		\begin{align}\label{eq_kappa_m_Tr_non_asymp}
			&\kappa_m(T_r^0) = \mathcal{F}[f_r^{m}](0)\varphi(0) + r^2\sum_{n = 2}^m (-1)^{n-1} (n-1)! \sum_{B_1, \dots, B_n}  \int_{\mathbb{R}^2} f^m(x) dx \nonumber \\& \qquad+ r^2\sum_{n = 2}^m (-1)^{n-1} (n-1)! \sum_{B_1, \dots, B_n}  \sum_{q = 1}^n  \sum_{S_q} \circledast_{i = 1}^n \{\overline{\mathcal{F}[f_r^{|B_i|}]}(\varphi_r-1)^{\mu_i}\}(0) \nonumber +o(1)\\& = r^2\sum_{n =2}^m (-1)^{n-1} (n-1)! \sum_{B_1, \dots, B_n}  \sum_{q = 1}^n  \sum_{S_q} \circledast_{i = 1}^n \{\overline{\mathcal{F}[f_r^{|B_i|}]}(\varphi_r-1)^{\mu_i}\}(0) +o(1).
		\end{align}
		Now, we compute the limit of~\eqref{eq_kappa_m_Tr_non_asymp}. We begin with the term $q = 1$. For $n \geq 2$, we have
		\begin{align*}
			r^2\sum_{S_1 \subset \{1, \dots, n\}}& \circledast_{i = 1}^n \{\overline{\mathcal{F}[f_r^{|B_i|}]}(\varphi_r-1)^{\mu_i}\}(0) \\&= \sum_{j = 1}^n \int_{(\mathbb{R}^2)^{n-1}} r^2 \left(\varphi\left(\frac{x_j-x_{j-1}}r\right)-1\right)\prod_{i = 1}^n \overline{\mathcal{F}[f_r^{|B_i|}]}(x_i - x_{i-1}) dx \\& = \sum_{j = 1}^n A_j + \sum_{j = 1}^n \tilde{A_j},
		\end{align*}
		with the convention $x_0 = x_n = 0$ and where 
		\begin{align*}
			&A_j = \int_{(\mathbb{R}^2)^{n-1}} \left(\varphi\left(\frac{x_j-x_{j-1}}r\right)-1 - \nabla \varphi(0) \cdot\frac{x_j-x_{j-1}}r \right)\prod_{i = 1}^n \overline{\mathcal{F}[f_r^{|B_i|}]}(x_i - x_{i-1}) dx,
			\\& \tilde{A_j} = r \int_{(\mathbb{R}^2)^{n-1}} \nabla \varphi(0) \cdot(x_j-x_{j-1})\prod_{i = 1}^n \overline{\mathcal{F}[f_r^{|B_i|}]}(x_i - x_{i-1}) dx.
		\end{align*}
		The terms $\tilde{A_j}$ will be considered later in the proof. For now, we compute the limit of the terms $A_j$. Since the perturbations have a moment of order $2$, $\varphi$ is two times differentiable. As a consequence, we can pass to the limit with the Lebesgue's dominated convergence theorem, justified by the fact that the functions $(f^{|B_i|})_{1 \leq i \leq n}$ are Schwartz:
		\begin{align}\label{eq_lim_aj}
			\lim_{r \to \infty} A_j = \frac12\int_{(\mathbb{R}^2)^{n-1}} (x_j-x_{j-1}) \cdot \nabla^2 \varphi(0) (x_j-x_{j-1})\prod_{i = 1}^n \overline{\mathcal{F}[f_r^{|B_i|}]}(x_i - x_{i-1}) dx.
		\end{align}
		We compute similarly the limit of the term $q = 2$:
		\begin{align}\label{eq_lim_q2}
			&r^2\sum_{S_2 \subset \{1, \dots, n\}} \circledast_{i = 1}^n \{\mathcal{F}[f_r^{|B_i|}](\varphi-1)^{\mu_i}\}(0) \nonumber= \\& \sum_{j, j' = 1}^n \int r \left(\varphi\left(\frac{x_j-x_{j-1}}r\right)-1\right) r\left(\varphi\left(\frac{x_{j'}-x_{j'-1}}r\right)-1\right)\prod_{i = 1}^n \overline{\mathcal{F}[f_r^{|B_i|}]}(x_i - x_{i-1})dx \nonumber\\& \to \sum_{j, j' = 1}^n \int_{(\mathbb{R}^2)^{n-1}} \nabla \varphi(0) \cdot(x_{j}-x_{j-1}) \nabla \varphi(0) \cdot(x_{j'}-x_{j'-1}) \prod_{i = 1}^n \overline{\mathcal{F}[f_r^{|B_i|}]}(x_i - x_{i-1})dx.
		\end{align}
		Finally, the terms corresponding to $q \geq 3$ converge to $0$ since for $r > 1$, 
		\begin{align}\label{eq_lim_q3}
			&r^2\sum_{S_q} \circledast_{i = 1}^n \left|\{\overline{\mathcal{F}[f_r^{|B_i|}]}(\varphi-1)^{\mu_i}\}(0)\right| \nonumber\\& \leq \sum_{S_q} \int_{(\mathbb{R}^2)^{n-1}} r^2\prod_{s \in S_q} \left|\varphi\left(\frac{x_s - x_{s-1}}r\right)-1\right| \prod_{i = 1}^n \left|\mathcal{F}[f^{|B_i|}](x_i - x_{i-1})\right|dx\nonumber \\& \leq \frac1r\sum_{S_q} \int_{(\mathbb{R}^2)^{n-1}} \prod_{s \in S_q} |\nabla \varphi(0)| |x_s - x_{s-1}| \prod_{i = 1}^n \left|\mathcal{F}[f^{|B_i|}](x_i - x_{i-1})\right|dx.
		\end{align}
		To consider the limit of $\kappa_m(T_r^0)$, it remains to  prove that, 
		\begin{equation}\label{eq_lim_bj}
			\sum_{n = 1}^m (-1)^{n-1} (n-1)! \sum_{B_1, \dots, B_n} \sum_{j = 1}^n \tilde{A_j} = 0.
		\end{equation}
		In fact, $\tilde{A_j} = 0$ for all $j$. Indeed, by denoting $\nabla \varphi(0) = (a, b) \in \mathbb{C}^2$, and using integration by part and the Fourier inversion theorem, we obtain, 
		\begin{align*}
			\tilde{A_j} &= r \int_{(\mathbb{R}^2)^{n-1}} \left(a(x_j^1-x_{j-1}^1) + b (x_j^2-x_{j-1}^2)\right)\prod_{i = 1}^n \overline{\mathcal{F}[f^{|B_i|}]}(x_i - x_{i-1}) dx \\&= - \frac{a}{2\bm{i}\pi} |B_j| \int_{(\mathbb{R}^2)^{n-1}} \overline{\mathcal{F}[\partial_1 f f^{|B_j|-1}]}(x_j - x_{j-1}) \prod_{i = 1, i \neq j}^n \overline{\mathcal{F}[f^{|B_i|}]}(x_i - x_{i-1}) dx \\& \quad - \frac{b}{2\bm{i}\pi } |B_j| \int_{(\mathbb{R}^2)^{n-1}} \overline{\mathcal{F}[\partial_2 f f^{|B_j|-1}]}(x_j - x_{j-1}) \prod_{i = 1, i \neq j}^n \overline{\mathcal{F}[f^{|B_i|}]}(x_i - x_{i-1}) dx \\& = - \frac{|B_j|}{2\bm{i} \pi} \int_{\mathbb{R}^2} (a \partial_1 f(x) + b \partial_1 f(x)) f^{m-1}(x) dx \\& = - \frac{|B_j|}{2\bm{i} \pi} \int_{\mathbb{R}} a \left[f^m(x^1, x^2)\right]_{x^1 = - \infty}^{x^1 = \infty} dx^2  - \frac{|B_j|}{2\bm{i} \pi} \int_{\mathbb{R}} b \left[f^m(x^1, x^2)\right]_{x^2 = - \infty}^{x^2 = \infty} dx^1 = 0.
		\end{align*}
		Finally, gathering~\eqref{eq_lim_aj},~\eqref{eq_lim_q2},~\eqref{eq_lim_q3} and~\eqref{eq_lim_bj}, we obtain the limit of the cumulants, 
		\begin{align*}
			\lim_{r \to \infty} \kappa_m(T_r) = \frac12 \sum_{n =2}^m (-1)^{n-1} (n-1)! \sum_{B_1, \dots, B_n} (C_n + D_n),
		\end{align*}
		where 
		$$C_n= \frac12 \sum_{j  = 1}^n \int_{(\mathbb{R}^2)^{n-1}} (x_j-x_{j-1}) \cdot \nabla^2 \varphi(0) (x_j-x_{j-1})\prod_{i = 1}^n \overline{\mathcal{F}[f_r^{|B_i|}]}(x_i - x_{i-1}) dx,$$
		$$D_n = \sum_{j, j' = 1}^n \int_{(\mathbb{R}^2)^{n-1}} \nabla \varphi(0) \cdot(x_{j}-x_{j-1}) \nabla \varphi(0) \cdot(x_{j'}-x_{j'-1}) \prod_{i = 1}^n \overline{\mathcal{F}[f_r^{|B_i|}]}(x_i - x_{i-1})dx.$$
		The last step is to prove that this limit is zero. We begin with $C_n$ and denote $c = \nabla^2 \varphi(0)_{1, 1}$, $d =\nabla^2 \varphi(0)_{2, 2}$ and $e = \nabla^2 \varphi(0)_{2, 1}$. We have, by integration by part, and with the Fourier inversion theorem,
		\begin{align}\label{eq_Cn}
			2C_n &= \sum_{j  = 1}^n  \int_{(\mathbb{R}^2)^{n-1}} \left(c (x_j^1-x_{j-1}^1)^2 +d(x_j^2-x_{j-1}^2)^2\right) \prod_{i = 1}^n \overline{\mathcal{F}[f^{|B_i|}]}(x_i - x_{i-1}) dx \nonumber\\& \quad + 2 e \sum_{j  = 1}^n  \int_{(\mathbb{R}^2)^{n-1}} (x_j^1-x_{j-1}^1)(x_j^2-x_{j-1}^2) \prod_{i = 1}^n \overline{\mathcal{F}[f^{|B_i|}]}(x_i - x_{i-1}) dx \nonumber\\& = \frac{1}{4\pi^2} \sum_{j  = 1}^n \int_{\mathbb{R}^2} \left(c\partial_1^2 f^{|B_j|}(x) + d \partial_2^2 f^{|B_j|}(x) + 2e \partial_{12}^2 f^{|B_j|}(x) \right)f^{m - |B_j|}(x)dx.
		\end{align}
		Concerning $D_n$, with the already employed notation $\nabla \varphi(0) = (a, b) \in \mathbb{C}^2$, we have
		\begin{align}\label{eq_Dn}
			D_n &= \sum_{j, j' = 1}^n \int_{(\mathbb{R}^2)^{n-1}} \Big(a^2 (x_j^1 - x_{j-1}^1)(x_{j'}^1 - x_{j-1}^1) + 2ab (x_j^1 - x_{j-1}^1)(x_{j'}^2 - x_{j-1}^2)\nonumber \\& \qquad \qquad\qquad \qquad \qquad + b^2 (x_j^2 - x_{j-1}^2)(x_{j'}^2 - x_{j-1}^2)\Big)\prod_{i = 1}^n \overline{\mathcal{F}[f^{|B_i|}]}(x_i - x_{i-1})dx \nonumber\\& = \frac1{4\pi^2}\sum_{j = 1}^n \int_{\mathbb{R}^2} \left(a^2 \partial_1^2 f^{|B_j|}(x) + 2ab \partial_{12} f^{|B_j|}(x) + b^2 \partial_2^2 f^{|B_j|}\right) f^{m -|B_j|}(x) dx \nonumber\\& \quad + \sum_{1 \leq j, j' \leq n}^{j \neq j'} \int_{\mathbb{R}^2} \Big(a^2 \partial_1 f^{|B_j|}(x) \partial_1 f^{|B_{j'}|}(x) + 2ab \partial_{1} f^{|B_j|}(x) \partial_{2} f^{|B_{j'}|}(x) \nonumber \\& \qquad \qquad\qquad \qquad \qquad  \qquad\qquad \qquad \qquad + b^2 \partial_2 f^{|B_j|} \partial_2 f^{|B_{j'}|}\Big)f^{m -|B_j| - |B_j'|}(x) dx.
		\end{align}
		Note that the above expressions for $C_n$ and $D_n$ are null when $n = 1$ since, in that case, $|B_j| = m$ and we can integrate the partial derivatives of $f^{|B_j|}$. Moreover, the sum over $j \neq j'$ is empty. Accordingly, by denoting $C_1$ (resp. $D_1$) the quantities of equation~\eqref{eq_Cn} (resp.~\eqref{eq_Dn}) for $n = 1$, we can add the term $n = 1$ in the sum,
		\begin{align}\label{eq_lim_kappa_m}
			\lim_{r \to \infty} \kappa_m(T_r) = \frac12 \sum_{n =1}^m (-1)^{n-1} (n-1)! \sum_{B_1, \dots, B_n} (C_n + D_n).
		\end{align}
		We pursue the computation~\eqref{eq_Cn} of $C_n$ for $n \geq 1$. Using the fact that $\sum_{j = 1}^{n} |B_j| = m$, we get,
		\begin{align*}
			&2C_n = \frac{1}{4\pi^2} \sum_{j  = 1}^n |B_j| \int_{\mathbb{R}^2} \left(c \partial_1^2 f(x) + d\partial_2^2 f(x) + 2e \partial_{12}^2 f(x)\right)f^{m-1}(x) dx \\&  + \frac{1}{4\pi^2} \sum_{j  = 1}^n |B_j|(|B_j|-1) \int_{\mathbb{R}^2} \left(c (\partial_1 f)^2(x) + d(\partial_2 f)^2(x) + 2e \partial_{1} f(x) \partial_{2} f(x)\right)f^{m-2}(x) dx \\& = q_1 m + q_2 \sum_{j = 1}^n |B_i|^2, 
		\end{align*}
		for some constants $q_1, q_2$ that depend on $f$ and $m$ but not on $n$. Similarly, one can prove using~\eqref{eq_Dn} that there exist similar constants $q_3, q_4$ and $q_5$ such that for $n \geq 1$, 
		$$D_n = q_3 m + q_4\sum_{j = 1}^{n} |B_j|^2 + q_5\sum_{1 \leq j, j' \leq 1}^{j \neq j'} |B_j| |B_j'|.$$
		Finally, recalling~\eqref{eq_lim_kappa_m} and using the combinatorics equations~\eqref{eq_cmb_1},~\eqref{eq_cmb_2},~\eqref{eq_cmb_3} of Lemma~\ref{lemma_cmb}, we obtain $\lim_{r \to \infty}\kappa_m(T_r^0) =0$ for all $m \geq 3$, and the asymptotic normality follows.
	\end{proof}
	
	In the previous theorem, we assume that the perturbations have a finite second-order moment in order to be in a setting where the variance of $T_r^0$ is asymptotically bounded. The next proposition shows that these two properties are in fact equivalent.
	
	\medskip \begin{proposition}\label{prop_var_bounded}
		Let $d =2$ and $f \in \mathcal{S}(\mathbb{R}^2)$. We assume that there exists $x_0 \in \mathbb{R}^2$ such that $\mathcal{F}[f](x_0) \neq 0$. We consider i.i.d. random variables $(\xi_x)_{x \in \mathbb{Z}^2}$ that are a.s. finite. Then, $\operatorname{Var}[T_r^0]$ is bounded if and only if $\mathbb{E}[|\xi_0|^2] < \infty$.
	\end{proposition}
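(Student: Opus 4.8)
The plan is to express $\operatorname{Var}[T_r^0]$ through Corollary~\ref{cor_expec_var}, reduce the statement to a second-moment condition on the symmetrised perturbation, and then prove the two implications by a crude upper bound and a localised lower bound respectively. By~\eqref{eq_var}, for any fixed $\beta>0$,
$$\operatorname{Var}[T_r^0]=r^{2}\int_{\mathbb{R}^2}|\mathcal{F}[f](x)|^{2}\bigl(1-|\varphi(x/r)|^{2}\bigr)\,dx+O(r^{-\beta}).$$
Let $\xi,\xi'$ be independent copies of $\xi_0$ and put $\eta:=\xi-\xi'$, which is symmetric and a.s.\ finite; from the definition of $\varphi$ and the symmetry of $\eta$ (compare~\eqref{eq_phi_couplage}) one has $1-|\varphi(t)|^{2}=\mathbb{E}\bigl[1-\cos(2\pi\,\eta\cdot t)\bigr]\ge 0$. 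Writing $g:=|\mathcal{F}[f]|^{2}\in\mathcal{S}(\mathbb{R}^2)$, so that $g\ge 0$, Tonelli's theorem yields
$$\operatorname{Var}[T_r^0]=\mathbb{E}\!\left[\,r^{2}\int_{\mathbb{R}^2}g(x)\bigl(1-\cos(2\pi\,\eta\cdot x/r)\bigr)\,dx\right]+O(r^{-\beta}).$$

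Next I would record a standard symmetrisation fact: $\mathbb{E}[|\xi_0|^2]<\infty$ if and only if $\mathbb{E}[|\eta|^2]<\infty$. One direction is immediate from $|\eta|^2\le 2|\xi|^2+2|\xi'|^2$; for the other, if $\mathbb{E}[(\xi_0^{(i)})^2]=\infty$ for some coordinate $i$ then, with $m$ a median of $\xi_0^{(i)}$, the weak symmetrisation inequality $\mathbb{P}(|\eta^{(i)}|\ge t)\ge\frac12\mathbb{P}(|\xi_0^{(i)}-m|\ge t)$ combined with $(\xi_0^{(i)}-m)^2\ge\frac12(\xi_0^{(i)})^2-m^2$ forces $\mathbb{E}[(\eta^{(i)})^2]=\infty$. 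Hence it suffices to prove that $\operatorname{Var}[T_r^0]$ is bounded if and only if $\mathbb{E}[|\eta|^2]<\infty$. If $\mathbb{E}[|\eta|^2]<\infty$, then $r^{2}\bigl(1-\cos(2\pi\,\eta\cdot x/r)\bigr)\le 2\pi^2(\eta\cdot x)^2\le 2\pi^2|\eta|^2|x|^2$, so the expectation on the right-hand side above is bounded by $2\pi^2\,\mathbb{E}[|\eta|^2]\int_{\mathbb{R}^2}g(x)|x|^2\,dx<\infty$ uniformly in $r$, and $\operatorname{Var}[T_r^0]$ is bounded; this is essentially the estimate already used in the proof of Theorem~\ref{thm_tcl_var_bouded}.

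For the converse I would argue by contradiction, assuming $\mathbb{E}[|\eta|^2]=\infty$ and showing that $\operatorname{Var}[T_r^0]\to\infty$. Since $\mathcal{F}[f]$ is continuous with $\mathcal{F}[f](x_0)\ne 0$, fix $\rho>0$ and $c_0>0$ such that $g\ge c_0$ on $B(x_0,\rho)$, and set $R:=|x_0|+\rho$, so that $B(x_0,\rho)\subset B(0,R)$. Dropping the (nonnegative) part of the $x$-integral outside $B(x_0,\rho)$, then restricting the expectation to the event $\{|\eta|\le r/(2R)\}$ — on which $|2\pi\,\eta\cdot x/r|\le\pi$ for every $x\in B(0,R)$ — and using the elementary inequality $1-\cos u\ge\frac{2}{\pi^2}u^{2}$ for $|u|\le\pi$, one obtains, for $r$ large enough,
$$\operatorname{Var}[T_r^0]\ \ge\ 8c_0\,\mathbb{E}\!\left[\mathbf{1}_{\{|\eta|\le r/(2R)\}}\int_{B(x_0,\rho)}(\eta\cdot x)^2\,dx\right]+O(r^{-\beta}).$$
Translating $x=x_0+y$ and observing that the cross term vanishes by symmetry, $\int_{B(x_0,\rho)}(v\cdot x)^2\,dx=|B(0,\rho)|\,(v\cdot x_0)^2+\frac{|v|^2}{2}\int_{B(0,\rho)}|y|^2\,dy\ge\lambda_N|v|^2$ with $\lambda_N:=\frac12\int_{B(0,\rho)}|y|^2\,dy>0$. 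Therefore $\operatorname{Var}[T_r^0]\ge 8c_0\lambda_N\,\mathbb{E}\bigl[|\eta|^2\,\mathbf{1}_{\{|\eta|\le r/(2R)\}}\bigr]+O(r^{-\beta})$, which tends to $+\infty$ as $r\to\infty$ by monotone convergence since $\mathbb{E}[|\eta|^2]=\infty$. This contradicts boundedness and completes the proof.

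The computations are elementary throughout; the points needing some care are the symmetrisation reduction to $\mathbb{E}[|\eta|^2]$ and, in the converse direction, the joint choice of the two truncation scales — a fixed ball on which $\mathcal{F}[f]$ stays away from zero, and the scale $\{|\eta|\le r/(2R)\}$ on which the quadratic lower bound for $1-\cos$ holds uniformly over that ball — together with the positive definiteness of $v\mapsto\int_{B(x_0,\rho)}(v\cdot x)^2\,dx$, which reflects that $g$ cannot be supported on a lower-dimensional subspace and is what keeps the lower bound from degenerating in particular directions of $\eta$.
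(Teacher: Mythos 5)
Your proof is correct, and while it shares the paper's overall skeleton — the Fourier expression of the variance from Corollary~\ref{cor_expec_var}, the symmetrisation $\eta=\xi-\xi'$ turning $1-|\varphi|^2$ into $\mathbb{E}[1-\cos(2\pi\,\eta\cdot x)]$ (cf.~\eqref{eq_phi_couplage}), quadratic bounds on $1-\cos$, and localisation of $|\mathcal{F}[f]|^2$ near $x_0$ — the converse direction is executed by genuinely different means. The paper argues directly from boundedness: Fatou's lemma yields $\int_{\mathbb{R}^2}|\mathcal{F}[f](x)|^2\,\mathbb{E}\bigl[|(\xi-\xi')\cdot x|^2\bigr]dx<\infty$, the a.s.\ finiteness of $\xi'$ is then used to condition on an event $\{|\xi'|\le M\}$ of positive probability and de-symmetrise via the triangle and Cauchy--Schwarz inequalities, and finally the second moments of $\xi$ along the canonical directions are extracted by evaluating at finitely many points of the ball where $\mathcal{F}[f]\neq0$. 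You instead prove the contrapositive: the median-based weak symmetrisation inequality reduces the statement to $\mathbb{E}[|\eta|^2]$, and then a truncation at scale $r/(2R)$, the bound $1-\cos u\ge \frac{2}{\pi^2}u^2$ for $|u|\le\pi$, the positive definiteness of $v\mapsto\int_{B(x_0,\rho)}(v\cdot x)^2dx$, and monotone convergence show that the variance diverges. Both routes are sound; yours replaces the conditioning/Cauchy--Schwarz de-symmetrisation by the standard symmetrisation lemma (where the finite median plays the role of the paper's bounded event for $\xi'$), trades Fatou for a truncated monotone-convergence lower bound, and yields the marginally stronger conclusion that $\operatorname{Var}[T_r^0]\to\infty$, not merely that it is unbounded, whenever $\mathbb{E}[|\xi_0|^2]=\infty$.
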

	\begin{proof}
		We have already shown with equation~\eqref{eq_lim_k2} that if $\mathbb{E}[|\xi_0|^2] < \infty$ then $\operatorname{Var}[T_r^0]$ is bounded. It remains to prove the converse. Therefore, assume that $\operatorname{Var}[T_r^0]$ is bounded by $C < \infty$. According to equation~\eqref{eq_var}, this translates to
		\begin{equation*}
			\forall r \geq r_0,~r^2 \int_{\mathbb{R}^2} |\mathcal{F}[f](x)|^2 (1 - |\varphi(x/r)|^2)dx \leq 2 C.
		\end{equation*}
		Let $\xi$ and $\xi'$ be two independent random variables with the same distribution as $\xi_0$. Using equation~\eqref{eq_phi_couplage}, we have,
		$$\int_{\mathbb{R}^2} |\mathcal{F}[f](x)|^2\mathbb{E}\left[r^2\sin^2\left(\frac{(\xi - \xi') \cdot x}{2r}\right)\right] dx \leq C.$$ 
		Then, the Fatou's lemma gives, 
		$$\int_{\mathbb{R}^2} |\mathcal{F}[f](x)|^2\mathbb{E}\left[\left|(\xi - \xi') \cdot x\right|^2\right] dx \leq 4 C.$$
		Since $\xi'$ is a.s. finite there exists $M > 0$ such that $\mathbb{P}[|\xi'| \leq M] > 0$. Subsequently,
		$$\int_{\mathbb{R}^2} |\mathcal{F}[f](x)|^2\mathbb{E}\left[\mathbf{1}_{|\xi'| \leq M}\left|(\xi - \xi') \cdot x\right|^2\right] dx \leq 4 C.$$
		Finally, using the triangle inequality, the independence between $\xi$ and $\xi'$, and the Cauchy-Schwarz inequality, we get 
		\begin{align*}
			\int_{\mathbb{R}^2} |\mathcal{F}[f](x)|^2  \mathbb{E}\left[|\xi.x|^2\right] dx &= \frac{1}{\mathbb{P}[|\xi'| \leq M]}\int_{\mathbb{R}^2} |\mathcal{F}[f](x)|^2  \mathbb{E}\left[\mathbf{1}_{|\xi'| \leq M}|\xi.x|^2\right] dx \\& \leq \frac{16 C}{\mathbb{P}[|\xi'| \leq M]} + 4 M^2 \int_{\mathbb{R}^2} |\mathcal{F}[f](x)|^2 |x|^2 dx < \infty.
		\end{align*}
		By continuity of $\mathcal{F}[f]$, there exists $\rho > 0$ such that $|\mathcal{F}[f]| > 0$ on $B(x_0, \rho)$. Consequently, for all $x \in B(x_0, \rho)$, we have $\mathbb{E}\left[|\xi.x|^2\right]  < \infty.$ Let $e_1 = (0, 1)$ and $e_2 = (1,0)$. Then, 
		$$\mathbb{E}\left[|\xi.e_i|^2\right] = \rho^{-2}\mathbb{E}\left[|\xi.(e_i\rho+ x_0 - x_0)|^2\right] \leq \frac{4}{\rho^2} \left(\mathbb{E}\left[|\xi.(e_i\rho+ x_0)|^2\right]+ \mathbb{E}\left[|\xi.x_0|^2\right]\right) < \infty,$$
		for $i \in \{1, 2\}$, which concludes the proof.
	\end{proof}
	
	\subsubsection{The case of heavy tailed perturbations}\label{sec:clt_ht}
	
	Proposition \ref{prop_var_bounded} implies that if $\mathbb{E}[|\xi_0|^{\nu}] = \infty$ for some $\nu \in (0, 2]$, then the variance of $T_r^0$ is unbounded. Theorem~\ref{thm_tcl_var_unbounded_2_ss} considers the case where $\nu < 2$ and proves that there is always the convergence of a sub-sequence of $T_r^0$ toward a Gaussian random variable, without any further assumptions. Its proof is based on the next lemma, that lower bounds the variance of $T_r^0$ with the tail of the perturbations. 
	
	\medskip \begin{lemma}\label{lemma_var_wtr_tail}
		Let $d =2$ and $f \in \mathcal{S}(\mathbb{R}^2)$. We assume that there exists $x_0 \in \mathbb{R}^2$ such that $f(x_0) \neq 0$. We consider i.i.d. random variables $(\xi_x)_{x \in \mathbb{Z}^2}$. Then, there exist $C > 0$, $\eta > 0$ and $r_0 > 0$ such that for all $r > r_0$:
		$$\operatorname{Var}[T_r^0] \geq C r^2 \mathbb{P}[|\xi_0| \geq \eta r] + O(r^{-1}).$$ 
	\end{lemma}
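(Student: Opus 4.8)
The plan is to start from the exact Fourier-side formula for the variance given in Corollary~\ref{cor_expec_var}, rewrite it through the symmetrization identity~\eqref{eq_phi_couplage}, and then reduce the lower bound to an analytic statement about an oscillatory integral attached to $f$, combined with a classical symmetrization inequality for the tail of $\xi_0$.

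First I would apply equation~\eqref{eq_var} with $d=2$ and $\beta=1$, and then~\eqref{eq_phi_couplage} with two independent copies $\xi,\xi'$ of $\xi_0$, to write
$$\operatorname{Var}[T_r^0] = 2r^2 \int_{\mathbb{R}^2} |\mathcal{F}[f](x)|^2\, \mathbb{E}\!\left[\sin^2\!\left(\frac{(\xi-\xi')\cdot x}{2r}\right)\right] dx + O(r^{-1}).$$
Since the integrand is nonnegative, Tonelli's theorem lets me exchange $\int dx$ and $\mathbb{E}$, so that $\operatorname{Var}[T_r^0] = 2r^2\, \mathbb{E}[g((\xi-\xi')/r)] + O(r^{-1})$, where $g(v) := \int_{\mathbb{R}^2} |\mathcal{F}[f](x)|^2 \sin^2(v\cdot x/2)\,dx$; by Plancherel's theorem this also equals $\tfrac12\|f\|_2^2 - \tfrac12\int_{\mathbb{R}^2}|\mathcal{F}[f](x)|^2\cos(v\cdot x)\,dx$, and in particular $g\ge 0$.

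The main point — and the step I expect to be the only delicate one — is to prove that $c_0 := \inf_{|v|\ge 1} g(v) > 0$. Here I would argue that $g$ is continuous on $\mathbb{R}^2$ (dominated convergence, since $|\mathcal{F}[f]|^2\in L^1(\mathbb{R}^2)$), that $g(v)>0$ for every $v\ne 0$ (otherwise $\sin(v\cdot x/2)$ would vanish for a.e.\ $x$ in the nonempty open set $\{\mathcal{F}[f]\ne 0\}$, which is impossible because the zero set of $x\mapsto\sin(v\cdot x/2)$ is a Lebesgue-null union of parallel lines), and that $g(v)\to\tfrac12\|f\|_2^2>0$ as $|v|\to\infty$ by the Riemann–Lebesgue lemma (using that $f$ is not identically null). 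Continuity and strict positivity on a compact annulus $\{1\le|v|\le R\}$, together with the $|v|\to\infty$ asymptotics for $R$ chosen large enough, then force $c_0>0$.

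Finally I would assemble everything: on the event $\{|\xi-\xi'|\ge r\}$ one has $|(\xi-\xi')/r|\ge 1$, hence $g((\xi-\xi')/r)\ge c_0$, and since $g\ge 0$ this gives $\mathbb{E}[g((\xi-\xi')/r)]\ge c_0\,\mathbb{P}[|\xi-\xi'|\ge r]$, i.e. $\operatorname{Var}[T_r^0]\ge 2c_0 r^2\,\mathbb{P}[|\xi-\xi'|\ge r]+O(r^{-1})$. A symmetrization bound closes the gap: since $\{|\xi|\ge 2r\}\cap\{|\xi'|\le r\}\subset\{|\xi-\xi'|\ge r\}$ and $\xi,\xi'$ are independent, $\mathbb{P}[|\xi-\xi'|\ge r]\ge\mathbb{P}[|\xi_0|\ge 2r]\,\mathbb{P}[|\xi_0|\le r]$, and since $\xi_0$ is almost surely finite there is $r_0$ with $\mathbb{P}[|\xi_0|\le r]\ge 1/2$ for $r\ge r_0$. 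Choosing $\eta=2$, $C=c_0$ and this $r_0$ yields $\operatorname{Var}[T_r^0]\ge Cr^2\,\mathbb{P}[|\xi_0|\ge\eta r]+O(r^{-1})$ for $r\ge r_0$, which is the claim.
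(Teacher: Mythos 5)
Your proof is correct and follows essentially the same route as the paper: the Fourier expression~\eqref{eq_var} for the variance, the symmetrization~\eqref{eq_phi_couplage} with independent copies $\xi,\xi'$, a Riemann--Lebesgue-type lower bound on $F(0)-F(v)$ (your $2g(v)$) away from the origin, and the final bound $\mathbb{P}[|\xi-\xi'|\ge r]\ge\mathbb{P}[|\xi_0|\ge 2r]\,\mathbb{P}[|\xi_0|\le r]$ with $\mathbb{P}[|\xi_0|\le r]\ge 1/2$ for large $r$. The only cosmetic difference is that you fix the threshold $|v|\ge 1$ (hence $\eta=2$) and therefore need the extra strict-positivity/compactness argument for $g$, whereas the paper simply takes whatever radius $\eta$ the Riemann--Lebesgue lemma provides.
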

	\begin{proof}
		According to equation~\eqref{eq_var}, 
		$$\operatorname{Var}[T_r^0] = r^2 \int_{\mathbb{R}^2} |\mathcal{F}[f](x)|^2 (1 - |\varphi(x/r)|^2)dx + O(r^{-1}).$$
		Let $\xi$ and $\xi'$ be two independent random variables with the same distribution as $\xi_0$. Then, using the Fubini's theorem, we have
		\begin{align*}
			\operatorname{Var}[T_r^0]&= r^2 \int_{\mathbb{R}^2} |\mathcal{F}[f](x)|^2 (1 - \mathbb{E}[e^{2\bm{i}\pi(\xi - \xi') \cdot x/r}])dx +O(r^{-1}) \\& = r^2 \mathbb{E}\left[\int_{\mathbb{R}^2} |\mathcal{F}[f](x)|^2 (1 - e^{2\bm{i}\pi(\xi - \xi')\cdot x/r})dx\right] + O(r^{-1}). 
		\end{align*}
		Let $F = \mathcal{F}[|\mathcal{F}[f]|^2]$. The previous equation can be rewritten:
		\begin{align*}
			\operatorname{Var}[T_r^0]&= r^2 \mathbb{E}\left[F(0) - F\left(\frac{\xi - \xi'}r\right)\right] + O(r^{-1}). 
		\end{align*}
		Note that $F$ is real and that $\|F\|_{\infty} = F(0)= \|f\|_2 > 0$. Indeed, 
		$$\overline{F(x)} = \int_{\mathbb{R}^2} |\mathcal{F}[f](k)|^2 e^{-2\bm{i} \pi x.k} dk = \int_{\mathbb{R}^2} |\mathcal{F}[f](-k)|^2 e^{2\bm{i} \pi x.k} dk = F(x).$$
		According to the Riemann-Lebesgue lemma, there exists $\eta > 0$ such that for all $|x| \geq \eta/2$, we have $F(0) - F(x) \geq F(0)/2$. Therefore,
		\begin{align*}
			\operatorname{Var}[T_r^0]& \geq \frac{F(0)}2 r^2 \mathbb{P}[|\xi - \xi'| \geq \eta r/2]+O(r^{-1})\\&  \geq \frac{F(0)}2 r^2 \mathbb{P}[|\xi| \geq \eta r] \mathbb{P}[|\xi'| < \eta r/2] +O(r^{-1}).
		\end{align*} 
		Since $\xi'$ is a.s. finite, there exists $r_0 > 0$ such that for all $r \geq r_0$, $\mathbb{P}[|\xi'| < \eta r/2] \geq 1/2$. This concludes the proof:
		$
			\forall r \geq r_0,~\operatorname{Var}[T_r^0]\geq F(0) r^2 \mathbb{P}[|\xi| \geq \eta r]/4 +O(r^{-1}).
		$
	\end{proof}
	
	\medskip \begin{theorem}\label{thm_tcl_var_unbounded_2_ss}
		We assume that $d = 2$ and consider i.i.d. random variables $(\xi_x)_{x \in \mathbb{Z}^2}$. We suppose that $\mathbb{E}[|\xi_0|^{\nu}]= \infty$ for some $\nu \in (0, 2)$. Let $f \in \mathcal{S}(\mathbb{R}^2)$. Then, there exists a positive sequence $(r_n)_{n \geq 1}$ with $\lim r_n = \infty$, such that
		$$\operatorname{Var}[T_{r_n}^0]^{-1/2} \left(T_{r_n}^0 - r_n^2 \int_{\mathbb{R}^2} f(x) dx \right)\xrightarrow[n \to \infty]{d} \mathcal{N}\left(0, 1\right).$$
	\end{theorem}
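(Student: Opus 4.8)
The plan is to use the method of cumulants, exactly as in Theorem~\ref{thm_tcl_var_unbouded}, but compensating for the fact that in dimension $2$ the variance need not diverge at a controlled polynomial rate. Write $Y_r := \operatorname{Var}[T_r^0]^{-1/2}(T_r^0 - r^2\int_{\mathbb{R}^2} f)$. By Corollary~\ref{cor_expec_var} the first cumulant of $Y_r$ tends to $0$ and its second cumulant is $1$. For $m\geq 3$, the bound~\eqref{eq_bound_kappa_m} from the proof of Theorem~\ref{thm_tcl_var_unbouded} (which holds verbatim in any dimension, since it only uses Proposition~\ref{prop_kappa} and Young's inequality) gives a constant $c_m<\infty$ with $|\kappa_m(T_r^0)|\leq c_m r^2$ for all $r>0$. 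Hence
$$
|\kappa_m(Y_r)| \leq c_m\, r^2\, \operatorname{Var}[T_r^0]^{-m/2}.
$$
So it suffices to exhibit a diverging sequence $(r_n)$ along which $r_n^2\,\operatorname{Var}[T_{r_n}^0]^{-m/2}\to 0$ for every $m\geq 5$ (equivalently $r_n^{4/m}/\operatorname{Var}[T_{r_n}^0]\to 0$ for all such $m$), after which Marcinkiewicz's theorem (Lemma~3 of~\cite{soshnikov2002gaussian}) forces the limit to be Gaussian. Since $\operatorname{Var}[Y_{r_n}]=1$, the convergence then holds.

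The key input is Lemma~\ref{lemma_var_wtr_tail}, which provides $C,\eta,r_0>0$ with $\operatorname{Var}[T_r^0]\geq C r^2\,\mathbb{P}[|\xi_0|\geq \eta r] + O(r^{-1})$ for $r>r_0$. The assumption $\mathbb{E}[|\xi_0|^\nu]=\infty$ for some $\nu\in(0,2)$ means the tail $\mathbb{P}[|\xi_0|\geq t]$ cannot decay faster than $t^{-\nu}$ on average; more precisely, if $t^{\nu}\mathbb{P}[|\xi_0|\geq t]$ were bounded then $\mathbb{E}[|\xi_0|^{\nu'}]<\infty$ for all $\nu'<\nu$ but also, by a standard layer-cake estimate, $\mathbb{E}[|\xi_0|^\nu]<\infty$ would follow if the bound held with enough room — so in fact $\limsup_{t\to\infty} t^{\nu}\mathbb{P}[|\xi_0|\geq t]=\infty$. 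Fix $\nu'\in(\nu,2)$; then $\limsup_{t\to\infty} t^{\nu'}\mathbb{P}[|\xi_0|\geq t]=\infty$ as well, so there is a sequence $t_n\to\infty$ with $\mathbb{P}[|\xi_0|\geq t_n]\geq t_n^{-\nu'}$. Setting $r_n := t_n/\eta$, Lemma~\ref{lemma_var_wtr_tail} gives, for $n$ large,
$$
\operatorname{Var}[T_{r_n}^0] \;\geq\; C' r_n^{2-\nu'}
$$
for some $C'>0$ (absorbing the $O(r^{-1})$ error, which is lower order since $2-\nu'>0$). Consequently $r_n^{4/m}/\operatorname{Var}[T_{r_n}^0] \leq (C')^{-1} r_n^{4/m - (2-\nu')}$, and since $2-\nu'>0$ is a fixed positive number, this tends to $0$ as soon as $m$ is large enough that $4/m < 2-\nu'$, i.e. for all $m > 4/(2-\nu')$. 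This handles all sufficiently large $m$, which is all that Marcinkiewicz's criterion requires.

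The main obstacle is the deduction, from $\mathbb{E}[|\xi_0|^\nu]=\infty$, of a genuinely polynomial lower bound on the tail along a subsequence — the statement $\limsup_t t^{\nu'}\mathbb{P}[|\xi_0|\geq t]=\infty$ for some $\nu'<2$. This is elementary but needs care: one argues by contradiction, assuming $t^{\nu'}\mathbb{P}[|\xi_0|\geq t]\leq K$ for all large $t$ and all $\nu'\in(\nu,2)$; taking any such $\nu'$ and using $\mathbb{E}[|\xi_0|^{\nu}] = \nu\int_0^\infty t^{\nu-1}\mathbb{P}[|\xi_0|\geq t]\,dt \leq \text{(const)} + \nu K\int_1^\infty t^{\nu-1-\nu'}\,dt < \infty$ since $\nu-1-\nu'<-1$, contradicting the hypothesis. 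Everything else is a routine combination of the already-established cumulant bound~\eqref{eq_bound_kappa_m}, the variance lower bound of Lemma~\ref{lemma_var_wtr_tail}, and Marcinkiewicz's theorem, exactly mirroring the end of the proof of Theorem~\ref{thm_tcl_var_unbouded}.
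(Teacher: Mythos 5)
Your proof is correct and takes essentially the same route as the paper's: the same variance--tail lower bound of Lemma~\ref{lemma_var_wtr_tail}, the same cumulant bound~\eqref{eq_bound_kappa_m} (which, as you note, is dimension-free), and the same reduction to Marcinkiewicz's criterion as in Theorem~\ref{thm_tcl_var_unbouded}; the only variation is that you extract the subsequence by contradicting a tail bound $\mathbb{P}[|\xi_0|\geq t]\leq K t^{-\nu'}$ for a fixed $\nu'\in(\nu,2)$, whereas the paper contradicts a variance bound of order $r^{2-\nu}\log(r)^{-2}$ and keeps the exponent $\nu$ — both yield a polynomially diverging variance along a subsequence, which is all the cumulant estimate needs. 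One caveat: your parenthetical claim that $\limsup_{t\to\infty} t^{\nu}\mathbb{P}[|\xi_0|\geq t]=\infty$ is false in general (an exact Pareto tail $\mathbb{P}[|\xi_0|\geq t]=\min(1,t^{-\nu})$ has $\mathbb{E}[|\xi_0|^{\nu}]=\infty$ yet bounded $t^{\nu}\mathbb{P}[|\xi_0|\geq t]$), but this is harmless since your actual argument only uses the correct statement with exponent $\nu'>\nu$.
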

	\begin{proof}
		We first show by contradiction that there exists a diverging positive sequence $(r_n)_{n \geq 1}$ such that $\operatorname{Var}[T_{r_n}] \geq C r_n^{2 - \nu} \log(r_n)^{-2}$ and $r_n > 1$ for all $n \geq 1$. Assume that for all $C > 0$ there exists $r_0 > 0$ such that for all $r \geq r_0$, $\operatorname{Var}[T_{r_n}] \leq C r^{2 - \nu} \log(r)^{-2}$. According to Lemma~\ref{lemma_var_wtr_tail}, there exist $\eta > 0$ and $C' < \infty$ such that for all $r \geq r_0$,
		$$r^2 \mathbb{P}[|\xi_0| \geq \eta r] \leq C' r^{2 - \nu}\log(r)^{-2} + O(r^{-1}) \leq (C'+1) r^{2 - \nu} \log(r)^{-2}.$$
		Hence, $\mathbb{P}[|\xi_0| \geq \eta r] \leq C' r^{- \nu} \log(r)^{-2}$ and by the Fubini's theorem, we have
		\begin{align*}
			\mathbb{E}[|\xi_0|^{\nu}] & = \nu \int_0^{\infty} \mathbb{P}[|\xi_0| \geq t] t^{\nu -1} dt \leq \nu \eta^{\nu} \left(\int_0^{r_0} r^{\nu-1} dr + C' \int_{r_0}^{\infty} \frac1{r \log(\eta r)^2} dr\right) < \infty,
		\end{align*}
		which contradicts the fact that $\mathbb{E}[|\xi_0|^{\nu}] = \infty$. Accordingly, there exists an unbounded increasing sequence $(r_n)_{n \geq 1}$ such that $\operatorname{Var}[T_{r_n}] \geq C r_n^{2 - \nu} \log(r_n)^{-2}$. Using the bound~\eqref{eq_bound_kappa_m} on the cumulants, we thus obtain, 
		$$\forall m \geq 3,~\left|\kappa_m(\operatorname{Var}[T_{r_n}]^{-1/2} T_{r_n}^0)\right| \leq C_m \frac{r_n^2 \log(r_n)^{2m}}{r_n^{m(2-\nu)/2}},$$
		for some constant $C_m < \infty$. Subsequently, for $m > 4/(2- \nu)$, $\kappa_m(\operatorname{Var}[T_{r_n}]^{-1/2} T_{r_n}^0)$ converges to $0$ at $n \to \infty$. Finally, according to Corollary~\ref{cor_expec_var}, $$\lim_{r \to \infty }\kappa_1(\operatorname{Var}[T_{r_n}^{-1/2}] (T_{r_n}^0 - r_n^2 \int_{\mathbb{R}^2} f(x)dx)) = 0,$$
		and the second cumulant of the previous random variable is $1$. This concludes the proof.
	\end{proof}
	
	\subsubsection{The case of perturbations having a regularly varying characteristic function}\label{sec:clt_rv_phi}
	
	Without any further assumptions on the perturbations, it is unclear whether Theorem~\ref{thm_tcl_var_unbounded_2_ss} actually occurs for the full sequence $T_r^0$. However, assuming a specific behavior near zero for the characteristic function $\varphi$ of $\xi_0$, we can prove that a central limit theorem holds. This assumption is related to the tail of perturbations (see Lemma~\ref{lemma_char_and_moment}) and allows to obtain a more precise control of the variance of $T_r^0$ than the lower bound provided by Lemma~\ref{lemma_var_wtr_tail}. In addition, it can be satisfied by random variables with infinite moments of order $2$ but finite moments of order strictly less than 2, a case that was excluded in the two previous theorems.
	
	\medskip \begin{theorem}\label{thm_tcl_var_unbounded_2}
		Let $d = 2$ and $f \in \mathcal{S}(\mathbb{R}^2)$. We consider i.i.d. random variables $(\xi_x)_{x \in \mathbb{Z}^d}$. 
		We suppose that the characteristic function $\varphi$ of $(\xi_x)_{x \in \mathbb{Z}^d}$ satisfies $$1 - \varphi(x) \sim L(|x|)|x|^{\alpha},$$
		as $|x| \to 0$, where $\alpha \in (0, 2]$ and $L$ is slowly varying with a limit at $0$.
		Then:
		$$L(1/r)^{-1/2} r^{\frac{\alpha-2}2}\left(T_r^0 - r^2 \int_{\mathbb{R}^2} f(x) dx \right)\xrightarrow[r \to \infty]{d} \mathcal{N}\left(0, \int_{\mathbb{R}^2} |\mathcal{F}[f](x)|^2|x|^{\alpha}dx\right).$$
	\end{theorem}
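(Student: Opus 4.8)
The plan is to prove the statement by the method of cumulants applied to $Y_r:=\operatorname{Var}[T_r^0]^{-1/2}\big(T_r^0-\mathbb{E}[T_r^0]\big)$, after isolating the degenerate regime in which the variance stays bounded. First I would note that, under the standing hypothesis, $\mathbb{E}[|\xi_0|^2]<\infty$ holds if and only if $\alpha=2$ and $\lim_{t\to 0}L(t)\in(0,\infty)$: a finite second moment forces $1-\Re\varphi(x)=\Theta(|x|^2)$ near $0$ (Taylor expansion of $\varphi$, plus non-degeneracy of $\xi_0$), which matches $1-\varphi(x)\sim L(|x|)|x|^\alpha$ only for $\alpha=2$ with $L$ having a finite positive limit, while conversely that case gives $1-\Re\varphi(x)=O(|x|^2)$, hence $\mathbb{E}[|\xi_0|^2]<\infty$ by the Fatou argument behind Lemma~\ref{lemma_char_and_moment}, and $\lim L=0$ at $\alpha=2$ would force $\xi_0$ to be a.s.\ constant. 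When $\mathbb{E}[|\xi_0|^2]<\infty$ the normalisation is asymptotically a constant and the conclusion is exactly Theorem~\ref{thm_tcl_var_bouded}. Hence I may assume $\mathbb{E}[|\xi_0|^2]=\infty$, and then $L(1/r)\,r^{2-\alpha}\to\infty$: for $\alpha<2$ the power beats any slowly varying factor, and for $\alpha=2$ infinite variance forces $\lim L=\infty$.

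From Corollary~\ref{cor_expec_var} I get $\mathbb{E}[T_r^0]=r^2\int_{\mathbb{R}^2}f+O(r^{-\beta})$ for all $\beta>0$ and $\operatorname{Var}[T_r^0]=r^2\int_{\mathbb{R}^2}|\mathcal{F}[f](x)|^2\big(1-|\varphi(x/r)|^2\big)dx+O(r^{-\beta})$. Since $1-|\varphi|^2=2\Re(1-\varphi)-|1-\varphi|^2$ with $|1-\varphi|^2=o(1-\Re\varphi)$, the hypothesis gives $1-|\varphi(x)|^2\sim\tilde L(|x|)|x|^\alpha$ with $\tilde L$ slowly varying and proportional to $L$, and the dominated-convergence and Potter-bound argument from the proof of Proposition~\ref{prop_var} yields $\operatorname{Var}[T_r^0]\sim\sigma_*^2\,L(1/r)\,r^{2-\alpha}$ with $\sigma_*^2:=\lim_r\operatorname{Var}[T_r^0]/(L(1/r)r^{2-\alpha})\in(0,\infty)$ (and, by the explicit form of Corollary~\ref{cor_expec_var}, $\sigma_*^2=\int_{\mathbb{R}^2}|\mathcal{F}[f](x)|^2|x|^\alpha dx$). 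In particular $\kappa_1(Y_r)=0$ and $\kappa_2(Y_r)=1$.

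The heart of the proof is the bound $|\kappa_m(T_r^0)|\le C_m L(1/r)r^{2-\alpha}$ for each fixed $m\ge 3$. Proposition~\ref{prop_kappa}, together with the rescaling $\circledast_{i=1}^n\{\overline{\mathcal{F}[f_r^{|B_i|}]}\varphi\}(0)=r^2\circledast_{i=1}^n\{\overline{\mathcal{F}[f^{|B_i|}]}\varphi_r\}(0)$ (where $f_r=f(\cdot/r)$ and $\varphi_r=\varphi(\cdot/r)$), writes $\kappa_m(T_r^0)$, up to $O(r^{-\beta})$, as $r^2$ times a finite signed sum over partitions of integrals over $(\mathbb{R}^2)^{n-1}$ (with $x_0=x_n=0$) of $\prod_{i=1}^n\varphi_r(x_i-x_{i-1})\overline{\mathcal{F}[f^{|B_i|}]}(x_i-x_{i-1})$. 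Putting $\varphi_r=1+\psi_r$ and expanding into $\sum_{S\subseteq\{1,\dots,n\}}\prod_{s\in S}\psi_r(x_s-x_{s-1})$, the $S=\emptyset$ term equals $r^2\big(\int f^m\big)\sum_{n=1}^m(-1)^{n-1}(n-1)!\sum_{B_1,\dots,B_n}1=0$ by equation~\eqref{eq_cmb_1} of Lemma~\ref{lemma_cmb}. For each other term (non-empty $S$, hence $n\ge 2$, since for $n=1$ one has $\psi_r(0)=0$) I would keep the decay of a single factor $\psi_r(x_{s_0}-x_{s_0-1})$, $s_0\in S$, bound the remaining $\psi_r$'s by $2$, and split the domain into $\{|x_{s_0}-x_{s_0-1}|\le\delta r\}$ and its complement: the complement contributes $O(r^{-N})$ for all $N$ by the Schwartz decay of $\mathcal{F}[f^{|B_{s_0}|}]$, and on the near region the regular variation of $1-\varphi$ and the Potter bound of Proposition~\ref{prop_slwy} give $|\psi_r(y)|\le C\,L(1/r)\,r^{-\alpha}\,W(y)$ with $W(y)=|y|^\alpha\max(|y|^{q_0},|y|^{-1/2})$; passing to the consecutive differences $y_i=x_i-x_{i-1}$ (constrained by $\sum_i y_i=0$), integrating out any $y_j$ with $j\ne s_0$ and bounding the factor $\mathcal{F}[f^{|B_j|}]$ by its sup norm leaves $\int_{\mathbb{R}^2}W\,|\mathcal{F}[f^{|B_{s_0}|}]|$ times $L^1$-norms of the other $\mathcal{F}[f^{|B_i|}]$ — all finite, since $W$ times a Schwartz function is in $L^1(\mathbb{R}^2)$ (the $|y|^{-1/2}$-type singularity at the origin being integrable in dimension two). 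Summing the finitely many terms gives the claimed bound.

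To conclude, for $m\ge 3$ one has $|\kappa_m(Y_r)|=|\kappa_m(T_r^0)|\operatorname{Var}[T_r^0]^{-m/2}\le C_m'\big(L(1/r)r^{2-\alpha}\big)^{1-m/2}\to 0$ since $L(1/r)r^{2-\alpha}\to\infty$ and $1-m/2<0$; with $\kappa_1(Y_r)=0$ and $\kappa_2(Y_r)=1$, the method of cumulants (convergence of all cumulants to those of the moment-determinate law $\mathcal{N}(0,1)$; see Lemma~3 of~\cite{soshnikov2002gaussian}) gives $Y_r\xrightarrow[r\to\infty]{d}\mathcal{N}(0,1)$, and a Slutsky step using $\operatorname{Var}[T_r^0]/(L(1/r)r^{2-\alpha})\to\sigma_*^2$ and $\mathbb{E}[T_r^0]-r^2\int_{\mathbb{R}^2}f=O(r^{-\beta})$ upgrades this to the convergence of $L(1/r)^{-1/2}r^{(\alpha-2)/2}\big(T_r^0-r^2\int_{\mathbb{R}^2}f\big)$ to $\mathcal{N}(0,\sigma_*^2)$. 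I expect the cumulant bound to be the main obstacle: when $\alpha<2$ the characteristic function need not be twice differentiable at $0$, so the Taylor-expansion cancellations of Theorem~\ref{thm_tcl_var_bouded} are unavailable, and one must show directly — through regular variation, Potter bounds, and careful handling of the local singularity they create — that once the combinatorially-cancelling constant term is removed, every surviving piece of Proposition~\ref{prop_kappa} is only of the same order $L(1/r)r^{2-\alpha}$ as the variance.
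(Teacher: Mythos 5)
Your proposal is correct and runs on the same machinery as the paper's proof: Proposition~\ref{prop_kappa} for the cumulants, the combinatorial cancellation~\eqref{eq_cmb_1} of Lemma~\ref{lemma_cmb} to kill the constant ($S=\emptyset$) term, a single-factor $L^1$ estimate of $\varphi_r-1$ via the Potter-type bounds of Proposition~\ref{prop_slwy} together with Young's inequality, and a reduction of the bounded-variance subcase to Theorem~\ref{thm_tcl_var_bouded}. The only real difference is organizational: the paper splits into $\alpha<2$ (where the crude bound $|\kappa_m(T_r^0)|\le c_m r^2$ of~\eqref{eq_bound_kappa_m} suffices because the variance diverges polynomially, $L(1/r)\ge r^{-\varepsilon}$), $\alpha=2$ with $L$ bounded (reduced to Theorem~\ref{thm_tcl_var_bouded} through Proposition~\ref{prop_var_bounded}), and $\alpha=2$ with $L$ unbounded (where the refined bound $|\kappa_m(T_r^0)|\le C_mL(1/r)+o(1)$ is proved); you instead prove the refined bound $|\kappa_m(T_r^0)|\lesssim L(1/r)r^{2-\alpha}$ uniformly for all $\alpha\in(0,2]$ and split only on $\mathbb{E}[|\xi_0|^2]<\infty$ versus $=\infty$, replacing Proposition~\ref{prop_var_bounded} by a direct characteristic-function argument. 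Your unified estimate is sound (the Potter weight produces at worst a $|y|^{\alpha-1/2}$ singularity, integrable in dimension two, and the $n\ge 2$ structure leaves a factor to take in sup norm), and it buys a single argument covering both divergence regimes at the cost of slightly more work in the $\alpha<2$ case, where the paper gets away with the crude bound. Two cosmetic remarks: your parenthetical identification $\sigma_*^2=\int|\mathcal{F}[f]|^2|x|^\alpha dx$ sits uneasily with your own observation that $1-|\varphi|^2\sim 2L(|x|)|x|^{\alpha}$ (the paper's statement has the same looseness about the constant, so this is not a gap relative to it), and the claim that a finite second moment forces $1-\Re\varphi=\Theta(|x|^2)$ from non-degeneracy alone is not needed — the upper bound $O(|x|^2)$ already forces $\alpha=2$ with $L$ bounded, and the degenerate-covariance and $\lim L=0$ cases are excluded by the hypothesis itself, exactly as you note.
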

	\begin{proof}
		We denote $Y_r := L(1/r)^{-1/2} r^{\frac{\alpha-2}2}\left(T_r^0 - r^2 \int_{\mathbb{R}^2} f(x) dx \right)$.
		We first consider the case when $\alpha < 2$. Note that $1 - |\varphi|^2 = (1 - \varphi)\overline{\varphi} + 1 - \overline{\varphi}$. Therefore, the assumption of Proposition~\ref{prop_var} is satisfied and there exist $C, r_0 > 0$ such that 
		$$\forall r \geq r_0,~\operatorname{Var}[T_r^0] \geq C L(1/r) r^{2 - \alpha}.$$
		Since $L$ is slowly varying, for all $\varepsilon > 0$, there exists $r_{\varepsilon} > 0$ such that for $r > r_{\varepsilon}$, $L(1/r) \geq r^{-\varepsilon}$. We choose $\varepsilon < 2 - \alpha$ in order to guarantee that $r^2/(r^{m(2 - \alpha)/2} L(1/r)^{m/2})$ converges to $0$ for $m$ large enough. Accordingly, arguing exactly as in the proof of Theorem~\ref{thm_tcl_var_unbounded_2_ss}, we obtain the asymptotic normality of $Y_r$. Now, when $\alpha = 2$ and $L$ is also bounded, according to Proposition~\ref{prop_var} the variance of $T_r^0$ is bounded. Subsequently, Proposition~\ref{prop_var_bounded} ensures that $\xi_0$ admits a moment of order $2$, and Theorem~\ref{thm_tcl_var_bouded} gives the result. 
		
		The main content of this theorem is thus the setting $\alpha = 2$ and $L$ not bounded. The result will follow from the bound
		\begin{equation}\label{eq_bound_kappa_m_L}
			\forall m \geq 3, ~|\kappa_m(T_r^0)| \leq C_m L(1/r) + o(1)
		\end{equation}
		where $C_m < \infty.$ To obtain it, we use equation~\eqref{eq_kappa_m_Tr_non_asymp}, i.e. for $m \geq 3$, 
 		\begin{equation}\label{eq_kappa_m_L}
			\kappa_m(T_r^0) = r^2\sum_{n =2}^m (-1)^{n-1} (n-1)! \sum_{B_1, \dots, B_n}  \sum_{q = 1}^n  \sum_{S_q} \circledast_{i = 1}^n \{\overline{\mathcal{F}[f^{|B_i|}]}(\varphi_r-1)^{\mu_i}\}(0) + o(1), 
		\end{equation}
		where $\sum_{S_q}$ denotes the sum over the sets $S_q$ of $\{1, \dots, n\}$ of cardinality $|S_q| = q$, $\mu_i = 1$ if $i \in S_q$ and $0$ otherwise with the notations $(\varphi_r -1)^{0} = 1$ and $\varphi_r(x) = \varphi(x/r)$. Let $i^*$ be one index $i$ such that $\mu_{i^*} = 1$. Then, using the fact that $|\varphi_r -1| \leq 2$, we obtain, with the Young's inequality for the convolution, 
		\begin{align*}
			|\circledast_{i = 1}^n \{\overline{\mathcal{F}[f^{|B_i|}]}(\varphi_r-1)^{\mu_i}\}(0)| &\leq \|\mathcal{F}[f^{|B_{i^*}|}](\varphi_r-1)\|_{1} \|\circledast_{i \neq i^*} \{\mathcal{F}[f^{|B_i|}](\varphi_r-1)^{\mu_i}\}\|_{\infty} \\& \leq \|\overline{\mathcal{F}[f^{|B_{i^*}|}]}(\varphi_r-1)\|_{1} 2^{n-1} \|\circledast_{i \neq i^*} |\overline{\mathcal{F}[f^{|B_i|}]}|\|_{\infty}.
		\end{align*}
		Since $1 - \varphi$ is bounded, and satisfies $1 - \varphi(x) \sim L(|x|)|x|^{\alpha}$ as $|x| \to 0$, there exists a constant $C < \infty$ such that for all $x \in \mathbb{R}^2$, then $|1 - \varphi(x)| \leq C L(|x|) |x|^2$. Moreover, according to Proposition~\ref{prop_slwy} for the slowly varying function $L$, there exists $0 < b, r_0, q, C' < \infty$ such that for $|x| \leq b r$ and $r \geq r_0$, then $L(|x|/r) \leq C' \max(|x|^q, |x|^{-1/2}) L(1/r)$. Subsequently,
		\begin{align*}
			\|\mathcal{F}[f^{|B_{i^*}|}]&(\varphi_r-1)\|_{1} \\&\leq \int_{B(0, br)} |\mathcal{F}[f^{|B_{i^*}|}](x)| C L(\frac{|x|}r) |\frac{x}r|^2 dx + 2\int_{B(0, br)^c} |\mathcal{F}[f^{|B_{i^*}|}](x)|dx\\& \leq  C C' \frac{L(1/r)}{r^2} \int_{B(0, br)} |\mathcal{F}[f^{|B_{i^*}|}](x)| \max(|x|^{q+2}, {|x|^{5/2}})dx + o(r^{-2}),
		\end{align*}
		where the $o(r^{-2})$ is a consequence of the fact that $f \in \mathcal{S}(\mathbb{R}^d)$. Summing all the previous inequalities over $B_i$, $q$ and $s \in S_q$, and recalling~\eqref{eq_kappa_m_L}, we finally get~\eqref{eq_bound_kappa_m_L}. Accordingly, for $m \geq 3$, there exists $C_m < \infty$ such that for $r \geq r_0$, 
		$$\left|\kappa_m(L(1/r)^{-1/2}T_r^0)\right| \leq C_m \frac{L(1/r)}{L(1/r)^{m/2}} \to 0,$$
		as $r \to \infty$ since $L$ is unbounded. Since the first two cumulants of $Y_r$ converge respectively to $0$ and $1$, we obtain the asymptotic normality of $Y_r$.
	\end{proof}
	
	\medskip \begin{remark}\label{rmq_non_centered_after_thm_d2}
		In the  previous theorem, the assumption on the characteristic function prohibits perturbations having a non zero mean. However, the results of Section~\ref{sec_stat} allow to extend Theorem~\ref{thm_tcl_var_unbounded_2} to this setting, without requiring additional computations. More details are provided in Remark~\ref{rmk_non_centered_rv}. 
	\end{remark}
	
	\medskip \begin{remark}
		The proof of the previous theorem is unchanged when considering the dimension $d =1$, but with the assumption $\alpha < 1$ or $\alpha = 1$ along with $L$ unbounded. In this second setting, the assumption that $L$ is not bounded is crucial. Indeed, the next section provides examples of non Gaussian limits when $L$ converges to a finite positive limit at zero.
	\end{remark}
	
	\section{Non-Gaussian limits}\label{sec_non_gauss}
	
	\subsection{Lack of a general central limit theorem when the variance is bounded in dimension~1}\label{sec_alpha_1}
	
	In this section, we prove that no general central limit theorem holds in dimension $1$ by considering the case of bounded variance of $T_r^0$. The proof is still based on cumulants, but, contrary to the dimension $2$, no general cancellations occur at the limit. 
	
	\medskip \begin{theorem}\label{thm_no_clt_alpha_1}
		We assume that $d = 1$, $f \in~\mathcal{S}(\mathbb{R})$ and consider i.i.d. random variables $(\xi_x)_{x \in \mathbb{Z}}$ with characteristic function $\varphi$ satisfying $1 - \varphi(x) \sim c |x|$ as $|x| \to 0$, where $c > 0$. 
		\begin{enumerate}
			\setlength\itemsep{0.5em}
			\item \label{i_1_thm_al_1} Then, $T_r^0 - r \int_{\mathbb{R}} f(x) dx$ converges in distribution toward a random variable, which is characterized by its moments. 
			\item \label{i_2_thm_al_1} If $f(x) = e^{-x^2}$, then the limit distribution is not Gaussian.
		\end{enumerate}
	\end{theorem}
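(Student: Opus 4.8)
We first identify the limiting cumulants of $T_r^0$ by means of Proposition~\ref{prop_kappa} and then upgrade this into a convergence in distribution.

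\emph{Step 1 (limiting cumulants).} Apply Proposition~\ref{prop_kappa} with $d=1$ and run the expansion already carried out in the proof of Theorem~\ref{thm_tcl_var_bouded}: after the change of variables $x_i \leftrightarrow r x_i$ the prefactor becomes $r$ rather than $r^2$, and writing $\varphi(x/r) = 1 + (\varphi(x/r)-1)$ and expanding the product over the blocks gives, for $m \geq 3$,
$$\kappa_m(T_r^0) = r\sum_{n = 2}^m (-1)^{n-1}(n-1)! \sum_{B_1, \dots, B_n} \sum_{q = 1}^n \sum_{S_q} \circledast_{i = 1}^n \{\overline{\mathcal{F}[f^{|B_i|}]}(\varphi_r - 1)^{\mu_i}\}(0) + o(1),$$
with $\varphi_r = \varphi(\cdot/r)$, where $S_q$ runs over the subsets of $\{1, \dots, n\}$ of cardinality $q$, $\mu_i = \mathbf{1}_{i \in S_q}$, and the $q = 0$ part has already cancelled against the $n = 1$ term via equation~\eqref{eq_cmb_1} of Lemma~\ref{lemma_cmb}. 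The hypothesis $1 - \varphi(x) \sim c|x|$ yields the global bound $|1 - \varphi(x)| \leq C\min(|x|,1)$, so that $|r(\varphi_r(y) - 1)| \leq C|y|$ uniformly in $r$ while $r(\varphi_r(y) - 1) \to -c|y|$ pointwise. Since every $\mathcal{F}[f^k]$ is a Schwartz function, dominated convergence shows that each $q = 1$ term converges, with $r(\varphi_r - 1)$ replaced by $-c|\cdot|$, whereas each $q \geq 2$ term is $O(r^{1-q}) \to 0$. Hence $\kappa_m(T_r^0) \to \kappa_m^\infty$ for every $m$, where $\kappa_1^\infty = 0$, $\kappa_2^\infty = 2c\int_{\mathbb{R}} |\mathcal{F}[f](x)|^2 |x|\, dx$ by Corollary~\ref{cor_expec_var}, and, writing $g_k := \mathcal{F}[f^k]$ and using $\circledast_{i \neq j} \overline{g_{|B_i|}} = \overline{g_{m - |B_j|}}$,
$$\kappa_m^\infty = \sum_{n = 2}^m (-1)^{n-1}(n-1)! \sum_{B_1, \dots, B_n} \sum_{j = 1}^n \bigl( (-c\, |\cdot|\, \overline{g_{|B_j|}}) \ast \overline{g_{m - |B_j|}} \bigr)(0), \qquad m \geq 3.$$

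\emph{Step 2 (from cumulants to the limit law).} Moments are polynomials in cumulants and $\mathbb{E}[T_r^0] - r\int_{\mathbb{R}} f \to 0$ by Corollary~\ref{cor_expec_var}, so Step~1 yields $\mathbb{E}\bigl[(T_r^0 - r\int_{\mathbb{R}} f)^k\bigr] \to \mu_k^\infty$ for each $k$, where $\mu_k^\infty$ is the corresponding polynomial in $\kappa_1^\infty, \dots, \kappa_k^\infty$. It then suffices to show $(\mu_k^\infty)$ is a determinate moment sequence. Write $T_r^0 = \sum_{x \in \mathbb{Z}} g_{x, r}$ with $g_{x, r} := f((x + \xi_x)/r)$ bounded by $M := \|f\|_\infty$ and independent in $x$; the elementary exponential inequality for centered bounded variables together with independence gives, for every $\theta \in \mathbb{R}$,
$$\mathbb{E}\bigl[e^{\theta(T_r^0 - \mathbb{E}[T_r^0])}\bigr] = \prod_{x \in \mathbb{Z}} \mathbb{E}\bigl[e^{\theta(g_{x, r} - \mathbb{E}[g_{x, r}])}\bigr] \leq \exp\!\Bigl( \tfrac{\theta^2}{2} e^{|\theta| M} \operatorname{Var}[T_r^0] \Bigr),$$
the right-hand side being bounded in $r$ since $\operatorname{Var}[T_r^0] \to \kappa_2^\infty < \infty$. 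Thus $\sup_r \mathbb{E}\bigl[e^{\theta |T_r^0 - \mathbb{E}[T_r^0]|}\bigr] < \infty$ for all $\theta$, which forces $\mu_{2k}^\infty \leq C(2k)!$; hence $(\mu_k^\infty)$ satisfies Carleman's condition and is the moment sequence of a unique law. By the method of moments (Fr\'echet--Shohat), $T_r^0 - r\int_{\mathbb{R}} f$ converges in distribution to the random variable $T_\infty$ having moments $(\mu_k^\infty)$, which is characterized by its moments. This is point~\ref{i_1_thm_al_1}.

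\emph{Step 3 (non-Gaussianity for $f(x) = e^{-x^2}$).} By Step~1, $\kappa_4(T_\infty) = \kappa_4^\infty$, so since a Gaussian has vanishing fourth cumulant it is enough to check $\kappa_4^\infty \neq 0$. For this $f$, which is real and even, every $g_k = \mathcal{F}[f^k]$ is real and even, so for $n \geq 2$ a block of size $k$ contributes the real weight $A_k := -c \int_{\mathbb{R}} |y|\, g_k(y) g_{4 - k}(y)\, dy$, with $A_k = A_{4-k}$; evaluating the combinatorial sum of Step~1 over the partitions of $\{1, \dots, 4\}$ collapses it to $\kappa_4^\infty = 6 A_2 - 8 A_1$. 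Moreover $g_k(y) = \mathcal{F}[e^{-k x^2}](y) = \sqrt{\pi / k}\, e^{-\pi^2 y^2 / k}$, so $\int_{\mathbb{R}} |y|\, g_k(y) g_{4 - k}(y)\, dy = \tfrac{1}{4\pi}\sqrt{k(4 - k)}$ and $A_k = -\tfrac{c}{4\pi}\sqrt{k(4 - k)}$, whence
$$\kappa_4^\infty = 6 A_2 - 8 A_1 = -\frac{3c}{\pi} + \frac{2\sqrt{3}\, c}{\pi} = \frac{(2\sqrt{3} - 3)\, c}{\pi} \neq 0,$$
which proves point~\ref{i_2_thm_al_1}.

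\emph{Main obstacle.} The delicate point is Step~1: contrary to dimension~$2$ (Theorem~\ref{thm_tcl_var_bouded}), no algebraic cancellation occurs beyond the $q = 0$ term, so one must both justify the passage to the limit in the $q = 1$ terms — which rests on combining the global control $|1 - \varphi(x)| \leq C\min(|x|, 1)$ with the Schwartz decay of the $\mathcal{F}[f^k]$ — and keep track of the surviving combinatorial structure. The moment-determinacy invoked in Step~2 is the second, subtler point, resolved by the uniform exponential-moment bound above; Step~3 is then a finite computation.
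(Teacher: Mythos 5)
Your proof is correct and follows the paper's skeleton for the two structural steps: the limit of the cumulants (same expansion as in Theorem~\ref{thm_tcl_var_bouded} with prefactor $r$, the $q=0$ part killed by~\eqref{eq_cmb_1}, the $q\geq 2$ terms of order $r^{1-q}$, and dominated convergence on the $q=1$ terms using $|1-\varphi(x)|\leq C|x|$ and the Schwartz decay of the $\mathcal{F}[f^k]$), and the non-Gaussianity check via a nonzero fourth cumulant for $f(x)=e^{-x^2}$. Where you genuinely diverge from the paper is the moment-determinacy step: the paper proves $|\kappa_m|\leq C\,m!\,q^m$ directly, bounding each integral $I_i$ by Young's inequality and Plancherel and controlling the partition sum by a Stirling-number estimate, and then invokes Carleman; you instead get Carleman from a uniform exponential-moment bound, using a Bernstein-type inequality for the independent bounded summands $f((x+\xi_x)/r)$ together with the boundedness of $\operatorname{Var}[T_r^0]$. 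Your route is softer and avoids the combinatorial growth estimates (which the paper's argument buys as extra quantitative information on the limiting cumulants); to make it airtight you should (i) replace the stated equality of the moment generating function with the infinite product by the inequality $\mathbb{E}[e^{\theta(T_r^0-\mathbb{E}T_r^0)}]\leq\liminf_N\prod_{|x|\leq N}\mathbb{E}[e^{\theta(g_{x,r}-\mathbb{E}g_{x,r})}]$, which follows from Fatou applied to the a.s. convergent partial sums and suffices, and (ii) use $2\|f\|_\infty$ rather than $\|f\|_\infty$ as the bound on the centered terms (harmless). On point~\ref{i_2_thm_al_1}, your collapse of the partition sum to $\kappa_4^\infty=6A_2-8A_1$ with $A_k=-\tfrac{c}{4\pi}\sqrt{k(4-k)}$ checks out against a direct enumeration of the partitions of $\{1,2,3,4\}$, giving $\kappa_4^\infty=(2\sqrt3-3)c/\pi$; this differs from the numerical constant displayed in the paper's proof, but both values are nonzero, so the non-Gaussian conclusion is unaffected (and you also keep the sign of the limit $-c|\cdot|$ of $r(\varphi_r-1)$ explicit, which is the more careful bookkeeping).
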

	\begin{proof}
	 	The convergence of the two first cumulants of $T_r^0 - r \int_{\mathbb{R}} f(x) dx$ is given by Corollary~\ref{cor_expec_var} and Proposition~\ref{prop_var}. Concerning the cumulants of order $m \geq 3$, one can follow exactly the proof of equation~\eqref{eq_kappa_m_Tr_non_asymp} to obtain for $m \geq 3$, 
		\begin{equation*}
			\kappa_m(T_r^0) = r\sum_{n =2}^m (-1)^{n-1} (n-1)! \sum_{B_1, \dots, B_n}  \sum_{q = 1}^n  \sum_{S_q} \circledast_{i = 1}^n \{\overline{\mathcal{F}[f^{|B_i|}]}(\varphi_r-1)^{\mu_i}\}(0) +o(1), 
		\end{equation*}
		where $\sum_{S_q}$ denotes the sum over the sets $S_q$ of $\{1, \dots, n\}$ of cardinality $|S_q| = q$, $\mu_i = 1$ if $i \in S_q$ and $0$ otherwise with the notations $(\varphi_r -1)^{0} = 1$ and $\varphi_r(x) = \varphi(x/r)$. Using the fact that $|1 - \varphi(x)|$ is bounded by $2$ and is equivalent to $|x|$ near $0$, we obtain that there exists $C_{\varphi} < \infty$ such that for all $x \in \mathbb{R}$, then $|1 - \varphi(x)| \leq C_{\varphi} |x|$. In the following we use the convention $x_0 = x_n = 0$. Accordingly, when $q \geq 2$, 
		\begin{align}\label{eq_d1_qgtr2}
			&r\sum_{S_q} \circledast_{i = 1}^n \left|\{\overline{\mathcal{F}[f^{|B_i|}]}(\varphi-1)^{\mu_i}\}(0)\right| \nonumber\\& \leq \sum_{S_q} \int_{\mathbb{R}^{n-1}} r\prod_{s \in S_q} \left|\varphi\left(\frac{x_s - x_{s-1}}r\right)-1\right| \prod_{i = 1}^n \left|\mathcal{F}[f^{|B_i|}](x_i - x_{i-1})\right|dx \nonumber\\& \leq \frac1r \sum_{S_q} \int_{\mathbb{R}^{n-1}} \prod_{s \in S_q}  C_{\varphi} |x_s - x_{s-1}| \prod_{i = 1}^n \left|\mathcal{F}[f^{|B_i|}](x_i - x_{i-1})\right|dx,
		\end{align}
		which converges to $0$ as $r \to \infty$. When $q = 1$, by the Lebesgue's dominated convergence theorem, we get, as $r \to \infty$,
		\begin{align}\label{eq_d1_q1}
			r \sum_{S_1 \subset \{1, \dots, n\}}& \circledast_{i = 1}^n \{\overline{\mathcal{F}[f^{|B_i|}]}(\varphi_r-1)^{\mu_i}\}(0) \nonumber\\&= \sum_{j = 1}^n \int_{\mathbb{R}^{n-1}} r \left(\varphi\left(\frac{x_j-x_{j-1}}r\right)-1\right)\prod_{i = 1}^n \overline{\mathcal{F}[f^{|B_i|}]}(x_i - x_{i-1}) dx\nonumber \\& \to c \sum_{j = 1}^n \int_{\mathbb{R}^{n-1}}  |x_j - x_{j-1}|\prod_{i = 1}^n \overline{\mathcal{F}[f^{|B_i|}]}(x_i - x_{i-1}) dx.
		\end{align}
		Subsequenlty, using~\eqref{eq_d1_qgtr2} and~\eqref{eq_d1_q1}, we get, for $m \geq 3$, 
		\begin{align*}
			\kappa_m :=\lim_{r \to \infty} \kappa_m(T_r^0) = c \sum_{n = 2}^m a_n \sum_{B_1, \dots, B_n}  \sum_{i = 1}^n \int_{\mathbb{R}^{n-1}} |x_i - x_{i-1}| \prod_{j =1}^n \overline{\mathcal{F}[f^{|B_i|}]}(x_j - x_{j-1}) dx,
		\end{align*}
		where $a_n = (-1)^{n-1} (n-1)!$. To get the point~\ref{i_1_thm_al_1}, we prove that for all $m \geq 3$, $|\kappa_m| \leq C m! q^m$ with $C, q < \infty$, from which the result will follow from the Carleman's condition (see \cite{durrett2019probability}, Section 3.3.5 or Lemma 3.6 of~\cite{gaultier2016fluctuations}). As a first step we bound, for $i \in \{1, \dots, n\}$,
		\begin{align}\label{eq_Ii}
			I_i := \int_{\mathbb{R}^{n-1}}\mathcal{F}[f^{|B_i|}](x_i - x_{i-1}) |x_i - x_{i-1}| \prod_{j = 1, j \neq i}^n \mathcal{F}[f^{|B_j|}](x_j - x_{j-1}) dx.
		\end{align}
		Using the Young's inequality for the convolution and the  Plancherel theorem, we get
		\begin{align*}
			|I_i| &\leq \|\mathcal{F}[f^{|B_i|}](y) y\|_2 \|\circledast_{j = 1, j \neq i}^n \mathcal{F}[f^{|B_j|}]\|_2  \\&\leq |B_i|\|f^{|B_i|-1}f'\|_2 \|f^{m-|B_i|}\|_2 \\& \leq |B_i|\|f\|_{\infty}^{|B_i|-1} \|f'\|_2 \|f\|_{\infty}^{\max(m - |B_i|-1, 0)} \|f\|_2   \\&\leq m (1+\|f\|_{\infty})^{m-2} \|f'\|_2 \|f\|_2.
		\end{align*}
		As a consequence,
		\begin{equation}\label{eq_kappam_1}
			|\kappa_m| \leq \frac{c}2 m (1+\|f\|_{\infty})^{m-2} \|f'\|_2 \|f\|_2 \sum_{n = 1}^m n! \sum_{B_1, \dots, B_n} 1.
		\end{equation}
		To upper bound the sum over $n$, we use the following inequality on the Stirling numbers of the second kind: $\sum_{B_1, \dots, B_n} 1 \leq \binom{m}{n} n^{m-n}$~\cite{rennie1969stirling}.
		Consequently:
		\begin{align}\label{eq_kappam_2}
			\sum_{n = 1}^m n! \sum_{B_1, \dots, B_n} 1 &\leq m! \sum_{n = 1}^m \frac{n^{m-n}}{(m-n)!} \leq m!  \sum_{n = 1}^m \frac{n^{m-n}}{(m-n)^{m-n}}e^{m-n-1} \nonumber\\& =  m!  \sum_{n = 1}^m \frac{(m-n)^n}{n^n}e^{n-1} \leq m! \sum_{n = 1}^m \left(1+\frac{m}{n}\right)^n e^{n-1} \nonumber\\& \leq m! e^{m}\sum_{n = 1}^m  e^{n-1}  \leq (e- 1)^{-1}m! e^{2m -1}.
		\end{align}
		Accordingly, inequalities~\eqref{eq_kappam_1} and~\eqref{eq_kappam_2} imply that for all $m \geq 3$, $|\kappa_m| \leq C m! q^m$ with $C, q < \infty$ that depend on $f$ and $c$ but not on $m$. As a consequence, $T_r^0$ converges toward a random variable, which is characterized by its moments.
		
		Finally, to prove the point~\ref{i_2_thm_al_1}, we show that $\kappa_4 \neq 0$ for $f(x) = e^{-x^2}$. Using the properties of the characteristic function of Gaussian random variables for the computations, we obtain $
			I_i = \sqrt{|B_i|\sum_{j \neq i}|B_j|}/(4\pi)$.
		As a consequence,
		$$\frac{8\pi}{c} \kappa_4 =  \sum_{n = 2}^4 (-1)^{n-1} (n-1)! \sum_{B_1, \dots, B_n} \sum_{i = 1}^n \sqrt{|B_i|(4 - |B_i|)} =  8 \sqrt{3} - 13 > 1/2,$$
		and the limit of $T_r^0$ is not Gaussian when $f(x)= e^{-x^2}$ because.  
	\end{proof}
	
	\medskip \begin{remark}
		The previous proof highlights that the lack of asymptotic normality is due to the irregular behavior of the characteristic function of the perturbation near zero, which is assumed to scale as $1 - c |x|$ as $|x| \to 0$ and to the fact that the asymptotic variance of $T_r^0$ is bounded. This disrupts the combinatorial simplifications, such as the one of Lemma~\ref{lemma_cmb} employed in the proof of Theorem~\ref{thm_tcl_var_bouded}, that arise when considering point processes having smooth second-order properties~\cite{rider2006complex, krishnapur2024stationary}.
	\end{remark}
	
	\subsection{Convergence toward $\alpha$-stable distributions when $d = 1$}\label{sec_no_clt}
	
	Surprisingly, in dimension $1$, and when the variance of $T_r^0$ converges to $0$ as $r \to \infty$, then $T_r^0$ can be asymptotically non Gaussian or Gaussian, depending on the behavior of the characteristic function $\varphi$ of the perturbations near zero. Specifically, if $1 - \varphi(x) \sim c |x|^{\alpha}$ as $|x| \to 0$, where $c > 0$ and $\alpha \in (1, 2]$, then the limit distribution of $T_r^0$ is $\alpha$-stable, and the rate of convergence becomes $r^{(\alpha-1)/\alpha}$. Note that for $\alpha = 2$, we recover a Gaussian limit and the normalization by the square root of the variance of $T_r^0$. 
	
	\medskip \begin{theorem}\label{thm_cv_alpha_stable}
		We assume that $d = 1$ and consider i.i.d. random variables $(\xi_x)_{x \in \mathbb{Z}}$ with characteristic function $\varphi$ satisfying $1 - \varphi(x) \sim c |x|^{\alpha}$ as $|x| \to 0$, where $c > 0$ and $\alpha \in (1, 2]$. Let $f \in \mathcal{S}(\mathbb{R})$. Then:
		$$r^{\frac{\alpha - 1}{\alpha}}\left(T_r^0 - r \int_{\mathbb{R}}f(x) dx\right) \xrightarrow[r \to \infty]{d} \left(c\int_{\mathbb{R}} |f'(x)|^{\alpha} dx\right)^{1/\alpha} S_{\alpha},$$
		where $S_{\alpha}$ is an $\alpha$-stable law, with characteristic function $\mathbb{E}[e^{2\bm{i}\pi k S_{\alpha}}] = e^{-|k|^{\alpha}}$.
	\end{theorem}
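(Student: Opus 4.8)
\emph{Proof proposal.} The plan is to work directly with characteristic functions, using that for the non-stationary lattice $\Phi^0$ the statistic $T_r^0 = \sum_{x \in \mathbb{Z}} f((x+\xi_x)/r)$ is a sum of \emph{independent} (not identically distributed) random variables. Two preliminary remarks. First, since $\alpha > 1$, the perturbations have a finite mean, which must vanish, $\mathbb{E}[\xi_0] = 0$ (otherwise the imaginary part of $\varphi(x)$ would be of order $|x|$, contradicting $1-\varphi(x) \sim c|x|^\alpha$ with $c>0$ real), and they have a polynomial tail $\mathbb{P}(|\xi_0| > t) = O(t^{-\alpha})$; this follows from the behaviour of $\varphi$ near zero, see Lemma~\ref{lemma_char_and_moment} and Appendix~\ref{sec_hyp_phi}. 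Second, by Poisson summation $\sum_{x \in \mathbb{Z}} f(x/r) = r\int_{\mathbb{R}} f + O(r^{-M})$ for every $M$ (as $\mathcal{F}[f]$ is Schwartz), so thanks to equation~\eqref{eq_exp} of Corollary~\ref{cor_expec_var} the centering $r\int_\mathbb{R} f$ may be replaced, modulo errors that vanish after multiplication by $r^{(\alpha-1)/\alpha}$, by $\mathbb{E}[T_r^0]$ or by $\sum_{x} f(x/r)$.

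\emph{Step 1: localization.} Fix any $\gamma > 1$ and set $\tilde T_r := \sum_{|x| \le r^\gamma} f((x+\xi_x)/r)$. For $|x| > r^\gamma$, hence $|x| \gg r$, splitting on whether $|x+\xi_x| \le |x|/2$ and using the Schwartz decay of $f$ gives $\mathbb{E}|f((x+\xi_x)/r)| \lesssim |x|^{-\alpha} + r^N|x|^{-N}$ for every $N$, whence $\mathbb{E}|T_r^0 - \tilde T_r| \lesssim \sum_{|x| > r^\gamma} |x|^{-\alpha} \asymp r^{\gamma(1-\alpha)}$; after multiplication by $r^{(\alpha-1)/\alpha}$ this is $r^{(\alpha-1)(1/\alpha - \gamma)} \to 0$, because $\gamma > 1 > 1/\alpha$. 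Moreover $\sum_{|x| \le r^\gamma} f(x/r) = r\int_\mathbb{R} f + O(r^{-M})$ since $|x/r|$ now ranges up to $r^{\gamma-1} \to \infty$. It therefore suffices to analyse the distributional limit of $r^{(\alpha-1)/\alpha}\bigl(\tilde T_r - \sum_{|x|\le r^\gamma} f(x/r)\bigr)$.

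\emph{Step 2: Taylor reduction to a shot noise.} For $|x| \le r^\gamma$ write $f((x+\xi_x)/r) - f(x/r) = f'(x/r)\,\xi_x/r + R_x$, where $|R_x| \lesssim \|f''\|_\infty (\xi_x/r)^2$ on $\{|\xi_x| \le r\}$ and $|R_x| \lesssim \|f\|_\infty + \|f'\|_\infty |\xi_x|/r$ on $\{|\xi_x| > r\}$ --- keeping in both cases the Schwartz decay of $f$, $f'$ in $x/r$, which controls the contribution of large $|x|$. Using $\mathbb{E}[\xi_0^2 \mathbf{1}_{|\xi_0| \le r}] = O(r^{2-\alpha})$, $\mathbb{E}[|\xi_0| \mathbf{1}_{|\xi_0| > r}] = O(r^{1-\alpha})$ and $\mathbb{P}(|\xi_0| > r) = O(r^{-\alpha})$, a term-by-term estimate yields $\sum_{|x| \le r^\gamma} \mathbb{E}|R_x| \lesssim r^{1-\alpha}$; since the $R_x - \mathbb{E}R_x$ are independent, $\mathbb{E}\bigl|\sum_x (R_x - \mathbb{E}R_x)\bigr| \le 2\sum_x \mathbb{E}|R_x| \lesssim r^{1-\alpha}$, which tends to $0$ after multiplication by $r^{(\alpha-1)/\alpha}$. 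Recalling $\mathbb{E}[\xi_x] = 0$, we conclude by the converging-together lemma that $r^{(\alpha-1)/\alpha}(T_r^0 - r\int_\mathbb{R} f)$ and $r^{(\alpha-1)/\alpha} V_r$, with $V_r := \sum_{|x| \le r^\gamma} f'(x/r)\,\xi_x/r$, have the same limit in distribution.

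\emph{Step 3: characteristic function of the shot noise, and conclusion.} Since $V_r$ is a weighted sum of the i.i.d. variables $\xi_x$ and $r^{(\alpha-1)/\alpha}/r = r^{-1/\alpha}$,
\[
	\mathbb{E}\bigl[e^{2\bm{i}\pi k\, r^{(\alpha-1)/\alpha} V_r}\bigr] = \prod_{|x| \le r^\gamma} \varphi\!\bigl(k\, r^{-1/\alpha} f'(x/r)\bigr).
\]
The arguments $k\, r^{-1/\alpha} f'(x/r)$ go to $0$ uniformly in $x$, so each factor stays within $1/2$ of $1$ and $\log \varphi(u) = -(1-\varphi(u)) + O(|1-\varphi(u)|^2)$ applies. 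Using $1-\varphi(u) = c|u|^\alpha(1+o(1))$ with the $o(1)$ uniform, together with $\sum_{|x|\le r^\gamma} |k\,r^{-1/\alpha}f'(x/r)|^{2\alpha} = O(r^{-1}) \to 0$, the logarithm of the product equals $-c|k|^\alpha\, r^{-1}\sum_{|x|\le r^\gamma} |f'(x/r)|^\alpha\,(1+o(1))$; a Riemann-sum argument (valid since $f'$ is continuous and Schwartz, so the tail $|x| > r^\gamma$ contributes $o(r)$) gives $r^{-1}\sum_{|x|\le r^\gamma} |f'(x/r)|^\alpha \to \int_\mathbb{R} |f'(y)|^\alpha\, dy$. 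Hence the characteristic function converges to $\exp\!\bigl(-c|k|^\alpha \int_\mathbb{R} |f'|^\alpha\bigr)$, the characteristic function of $(c\int_\mathbb{R}|f'|^\alpha)^{1/\alpha} S_\alpha$, and Lévy's continuity theorem concludes. The main obstacle is Step 2: because the perturbations have infinite variance when $\alpha < 2$, a naive second-order Taylor estimate fails, and one must truncate $\xi_x$ at scale $r$ and combine the bulk and tail bounds with the decay of $f$ so that the remainder is genuinely $o(r^{-(\alpha-1)/\alpha})$ after centering; a secondary point is ensuring the linearisation of $\log\varphi$ in Step 3 is uniform over the $O(r^\gamma)$ frequencies appearing in the product.
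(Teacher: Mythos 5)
Your proposal is correct and follows the same overall architecture as the paper's proof of Theorem~\ref{thm_cv_alpha_stable}: localize to $|x|\le r^{\gamma}$, linearize $f((x+\xi_x)/r)$ around $x/r$ so that the fluctuations are carried by the shot noise $\sum_{|x|\le r^{\gamma}} f'(x/r)\,\xi_x/r$, then pass to the limit in its characteristic function using $1-\varphi(u)\sim c|u|^{\alpha}$ and the Riemann-sum convergence of $r^{-1}\sum_{|x|\le r^{\gamma}}|f'(x/r)|^{\alpha}$ (this is the content of Lemmas~\ref{lemma_al_big_main_term}, \ref{lemma_al_sup_1remainder_term}, \ref{lemma_riemann_f_alpha} and the final argument in the paper). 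Where you genuinely differ is the control of the Taylor remainder: the paper bounds $|f((x+\xi)/r)-f(x/r)-f'(x/r)\xi/r|$ by $C|\xi|^{1+1/p}r^{-(1+1/p)}$ via H\"older on the integral remainder, with $1+1/p<\alpha$ so the moment is finite, and must then balance the exponents $\gamma,p,\beta$ (Lemma~\ref{lemma_numero}); you instead truncate at $|\xi_x|\le r$ and use the tail estimate $\mathbb{P}(|\xi_0|>t)=O(t^{-\alpha})$, which yields a per-site error of order $r^{-\alpha}$ and dispenses with the parameter juggling --- a real simplification. Two points need a line of care. First, the tail bound $O(t^{-\alpha})$ is not what Lemma~\ref{lemma_char_and_moment} gives (that lemma only yields moments of order strictly below $\alpha$); it does follow here from a standard truncation inequality, $\mathbb{P}(|\xi_0|\ge C/u)\lesssim u^{-1}\int_{-u}^{u}(1-\Re\varphi(v))\,dv$, because the slowly varying factor is the constant $c$; moreover for $\alpha=2$ your estimate $\mathbb{E}[\xi_0^2\mathbf{1}_{|\xi_0|\le r}]=O(r^{2-\alpha})=O(1)$ should rather be justified by the classical fact that $1-\Re\varphi(u)=O(u^2)$ near $0$ forces $\mathbb{E}[\xi_0^2]<\infty$ (the tail bound alone would only give $O(\log r)$). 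Second, on the event $\{|\xi_x|>r\}$ your stated bound $\|f\|_\infty+\|f'\|_\infty|\xi_x|/r$ carries no decay in $x$ (the value $f((x+\xi_x)/r)$ cannot be controlled by $x/r$ alone), so as written the remainder sum is $O(r^{\gamma-\alpha})$ rather than $O(r^{1-\alpha})$; this is still $o(r^{-(\alpha-1)/\alpha})$ provided you choose $1<\gamma<\alpha-\frac{\alpha-1}{\alpha}$, which is possible since $\alpha-\frac{\alpha-1}{\alpha}-1=\frac{(\alpha-1)^2}{\alpha}>0$, or you recover $O(r^{1-\alpha})$ for arbitrary $\gamma>1$ by re-using your Step-1 splitting on $\{|x+\xi_x|\le|x|/2\}$ for this term. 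With these repairs the argument is complete and matches the paper's conclusion.
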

	
	\medskip \begin{remark}
		As already mentioned in Remark~\ref{rmq_non_centered_after_thm_d2}, the above theorem can be extended to the case of non centered perturbations. More details are provided in Remark~\ref{rmk_non_centered_rv}.
	\end{remark}
	\medskip
	Before proving this result, we introduce a few heuristics that will be made rigorous in the remaining of this section. The key remark is that for $\alpha >~1$, the perturbations are enough localized to decouple $x$ and $\xi_x$, using the Taylor expansion
	\begin{equation}\label{eq_heur_taylor}
		T_r^0 \simeq \sum_{x \in \mathbb{Z}} f\left(\frac{x}r\right) +  \sum_{x \in \mathbb{Z}}  f'\left(\frac{x}r\right) \frac{\xi_x}r.
	\end{equation}
	Suppose, for simplicity, that the perturbations $(\xi_x)_{x \in \mathbb{Z}}$ follow an $\alpha$-stable $S_{\alpha}$ distribution. According to the stability of the sums of independent stable distributions, the term $\sum_{x \in \mathbb{Z}}  f'(x/r) \xi_x/r$ should be close to $r^{-1} \left(\sum_{x \in \mathbb{Z}} |f'(x/r)|^{\alpha}\right)^{1/\alpha} S_{\alpha}$. Then, recognizing a Riemann sum, we anticipate:
	\begin{equation}\label{eq_heur_riemann}
		\lim_{r\to \infty}\frac1r\sum_{x \in \mathbb{Z}} |f'(x/r)|^{\alpha} = \int_{\mathbb{R}} |f'(x)|^{\alpha} dx.
	\end{equation}
	The previous discussion suggests 
	$
		T_r^0 \simeq  r \int_{\mathbb{R}} f(x) dx + r^{-(\alpha - 1)/\alpha} \left(\int_{\mathbb{R}} |f'(x)|^{\alpha} dx\right)^{1/\alpha} S_{\alpha},
	$
	which is Theorem~\ref{thm_cv_alpha_stable}. In the remaining of this section, we prove that the previous arguments can be made rigorous in dimension $1$ and with $\alpha \in (1, 2]$. 
	
	The first step is to split $T_r^0$ between a main term (i.e. $|x| \leq r^{\gamma}$) and a remainder term (i.e. $|x| > r^{\gamma}$), where $\gamma >1$:
	$$r^{\frac{\alpha -1}{\alpha}} T_r^0 = r^{\frac{\alpha -1}{\alpha}} \sum_{|x| \leq r^{\gamma}} f\left(\frac{x + \xi_x}r\right) + r^{\frac{\alpha -1}{\alpha}} \sum_{|x| > r^{\gamma}} f\left(\frac{x + \xi_x}r\right).$$
	
	Lemma~\ref{lemma_al_big_main_term} studies the main term for which the approximation~\eqref{eq_heur_taylor} can be quantified. It introduces a parameter $p > 1$ that has to be small enough (regarding $\gamma$ and $\alpha$) to ensure that the quantity $\Delta$ of the next lemma vanishes when $r \to \infty$. However, the possible lack of moments of the random variables $(\xi_x)_{x \in \mathbb{Z}}$ forces $1+1/p < \alpha$. Lemma~\ref{lemma_al_sup_1remainder_term} proves that for $\gamma$ big enough the remainder term $r^{\frac{\alpha -1}{\alpha}} \sum_{|x| > r^{\gamma}} f\left((x + \xi_x)/r\right)$ converges in probability to $0$ when $r \to \infty$. It introduces an other parameter $\beta$ that should be big enough to ensure that the remainder vanishes as $r \to \infty$ (but, also with the restriction $\beta < \alpha$). As the choice of the parameters $\gamma$, $p$ and $\beta$ is not direct, we postpone it to Lemma~\ref{lemma_numero}. Finally, Lemma~\ref{lemma_riemann_f_alpha} proves~\eqref{eq_heur_riemann}.
	
	\medskip \begin{lemma}\label{lemma_al_big_main_term}
		We assume that $\alpha > 1$. Let $\gamma > 1$, $t \in \mathbb{R}$, and $p >1$ satisfying $1+1/p < \alpha$. We define $\Delta$ by:
		$$\prod_{|x| \leq r^{\gamma}} \mathbb{E}\left[\exp\left(\bm{i} t f\left(\frac{x+\xi_x}r\right) r^{\frac{\alpha-1}\alpha}\right)\right] - \prod_{|x| \leq r^{\gamma}} \mathbb{E}\left[\exp\left(\bm{i} t  \left(f\left(\frac{x}r\right) + f'\left(\frac{x}r\right) \frac{\xi_x}r\right)r^{\frac{\alpha-1}\alpha}\right)\right].$$
		Then, there exists a constant $C < \infty$ such that
		\begin{equation}\label{eq_al_sup_1_main_term}
			\left|\Delta\right| \leq C \frac{r^{\gamma + (\alpha-1)/\alpha}}{r^{1 +1/p}}.
		\end{equation}
	\end{lemma}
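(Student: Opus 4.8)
The plan is to recognise that the two products defining $\Delta$ are, evaluated at $t$, the characteristic functions of the main-block statistic $r^{(\alpha-1)/\alpha}\sum_{|x|\le r^{\gamma}}f\big((x+\xi_x)/r\big)$ and of its first-order Taylor linearisation $r^{(\alpha-1)/\alpha}\sum_{|x|\le r^{\gamma}}\big(f(x/r)+f'(x/r)\xi_x/r\big)$; by independence of the $(\xi_x)_{x\in\mathbb{Z}}$ these are genuine products $\Delta=\prod_{|x|\le r^{\gamma}}a_x-\prod_{|x|\le r^{\gamma}}b_x$ of the individual factors
$$a_x:=\mathbb{E}\Big[\exp\!\big(\bm{i}t\,r^{\frac{\alpha-1}{\alpha}}f(\tfrac{x+\xi_x}{r})\big)\Big],\qquad b_x:=\mathbb{E}\Big[\exp\!\big(\bm{i}t\,r^{\frac{\alpha-1}{\alpha}}\big(f(\tfrac{x}{r})+f'(\tfrac{x}{r})\tfrac{\xi_x}{r}\big)\big)\Big],$$
each of modulus at most $1$. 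The first step is then the standard hybrid (telescoping) inequality $\big|\prod a_x-\prod b_x\big|\le\sum|a_x-b_x|$, which reduces the whole estimate to a uniform bound $|a_x-b_x|=O(r^{(\alpha-1)/\alpha-1-1/p})$ over $|x|\le r^{\gamma}$.

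For a fixed $x$, I would combine Jensen's inequality with $|e^{\bm{i}u}-e^{\bm{i}v}|\le\min(2,|u-v|)$ to get
$$|a_x-b_x|\le\mathbb{E}\Big[\min\!\Big(2,\;|t|\,r^{\frac{\alpha-1}{\alpha}}\big|f(\tfrac{x+\xi_x}{r})-f(\tfrac{x}{r})-f'(\tfrac{x}{r})\tfrac{\xi_x}{r}\big|\Big)\Big].$$
Here the Schwartz assumption enters: $\|f''\|_\infty<\infty$, so the second-order Taylor remainder satisfies $\big|f(\tfrac{x+\xi_x}{r})-f(\tfrac{x}{r})-f'(\tfrac{x}{r})\tfrac{\xi_x}{r}\big|\le\tfrac12\|f''\|_\infty\,\xi_x^2/r^2$, and hence $|a_x-b_x|\le\mathbb{E}\big[\min(2,\,A\,r^{-(\alpha+1)/\alpha}\xi_x^2)\big]$ with $A:=\tfrac12|t|\,\|f''\|_\infty$.

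To accommodate the possibly heavy tails of $\xi_0$ — for which only moments of order strictly below $\alpha$ are available — I would use the elementary bound $\min(2,y)\le 2y^{\theta}$, valid for $y\ge0$ and any $\theta\in[0,1]$, applied with $\theta=(1+1/p)/2$; this is admissible because $p>1$ forces $1+1/p\in(1,2)$, so $\theta\in(\tfrac12,1)$. Writing $s:=1+1/p$, this yields $|a_x-b_x|\le 2A^{s/2}\,r^{-(\alpha+1)s/(2\alpha)}\,\mathbb{E}\big[|\xi_0|^{s}\big]$, where $\mathbb{E}[|\xi_0|^{s}]<\infty$ precisely because $s<\alpha$ together with $1-\varphi(x)\sim c|x|^{\alpha}$ near the origin (Lemma~\ref{lemma_char_and_moment}). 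Summing over the at most $3r^{\gamma}$ integers with $|x|\le r^{\gamma}$ (for $r\ge1$) gives $|\Delta|\le C\,r^{\gamma-(\alpha+1)s/(2\alpha)}$ with $C<\infty$ independent of $r$.

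The only genuinely delicate point is the closing exponent comparison: one must check that $\gamma-(\alpha+1)s/(2\alpha)\le\gamma+(\alpha-1)/\alpha-s$, which after multiplication by $2\alpha>0$ reduces to $(\alpha-1)(2-s)\ge0$ — true since $\alpha>1$ and $s=1+1/p<2$. For $r\ge1$ this gives the asserted bound~\eqref{eq_al_sup_1_main_term}, which is the only regime needed in the application. I expect the main obstacle to be not conceptual but a matter of bookkeeping: calibrating the interpolation exponent $\theta$ so that it matches exactly the available moment order $s=1+1/p$ while still retaining a strictly positive power of $r$ of decay — and it is exactly the hypothesis $1+1/p<\alpha$, via the identity $(\alpha-1)(2-s)\ge0$, that makes this work.
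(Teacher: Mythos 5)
Your argument is correct, and it shares its skeleton with the paper's proof: both start from the telescoping bound $|\prod_x a_x-\prod_x b_x|\le\sum_x|a_x-b_x|$ (the paper's inequality~\eqref{eq_diff_prod}, legitimate here because independence makes both expressions genuine products of factors of modulus at most one), and both reduce everything to a per-site estimate of the first-order Taylor remainder that uses only a moment of $\xi$ of order $s=1+1/p<\alpha$, supplied by Lemma~\ref{lemma_char_and_moment}. Where you differ is in how that remainder is tamed: the paper writes it in integral form and applies H\"older's inequality with exponent $p$ to $f''$, which gives directly a bound of order $\|f''\|_{p/(p-1)}\,|\xi|^{s}r^{-s}$ and hence exactly the exponent in~\eqref{eq_al_sup_1_main_term}; you keep the cruder Lagrange bound $\tfrac12\|f''\|_\infty\xi^2/r^2$, truncate through $|e^{\bm{i}u}-e^{\bm{i}v}|\le\min(2,|u-v|)$, and interpolate $\min(2,y)\le 2y^{s/2}$ to bring the moment requirement down to order $s$. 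Your route yields the per-site rate $r^{-(\alpha+1)s/(2\alpha)}$, which for $r\ge1$ is at least as strong as the stated $r^{(\alpha-1)/\alpha-s}$ — your final verification that the comparison reduces to $(\alpha-1)(2-s)\ge0$ is correct — so you recover~\eqref{eq_al_sup_1_main_term} in the only regime ($r\to\infty$) in which the lemma is invoked, at the price of the extra exponent bookkeeping and the harmless restriction $r\ge1$, whereas the paper's H\"older computation lands on the stated bound in its exact form, with an explicit constant, and without any closing comparison.
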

	\begin{proof}
		We use the bound, valid for all $(a_i,b_i)_{1\leq i \leq n} \in \mathbb{C}^{2n}$ satisfying $|a_i| \leq 1$ and $|b_i| \leq1$,
		\begin{equation}\label{eq_diff_prod}
			\left|\prod_{i = 1}^n a_i - \prod_{i = 1}^n b_i\right| \leq \sum_{i = 1}^n |a_i - b_i|,
		\end{equation}
		to obtain
		$$\left|\Delta\right|  \leq r^{\frac{\alpha - 1}{\alpha}}  |t| \sum_{|x| \leq r^{\gamma}} \mathbb{E}\left|f\left(\frac{x+\xi}r\right) - f\left(\frac{x}r\right) + f'\left(\frac{x}r\right) \frac{\xi}r\right|.$$
		To handle the lack of moments of $\xi$, we use a Taylor expansion with integral remainder term and Holder's inequality:
		\begin{align*}
			\left|f\left(\frac{x+\xi}r\right) - f\left(\frac{x}r\right) + f'\left(\frac{x}r\right) \frac{\xi}r\right| &\leq \frac12 \left|\int_{\frac{x}r}^{\frac{x+\xi}r} \left|\frac{x}r - \tau\right| |f''(\tau)| d\tau\right| \\& \leq \frac12 \left|\int_{\frac{x}r}^{\frac{x+\xi}r} \left|\frac{x}r - \tau\right|^{p} \right|^{1/p} \left(\int_{\mathbb{R}} |f''(\tau)|^{p/(p-1)} d\tau\right)^{1 - 1/p} \\& = \frac1{2(p+1)^{1/p}} \left(\int_{\mathbb{R}} |f''(\tau)|^{p/(p-1)} d\tau\right)^{1 - 1/p} \frac{|\xi|^{1+1/p}}{r^{1+1/p}}.
		\end{align*}
		This gives~\eqref{eq_al_sup_1_main_term} with a finite constant since $1+1/p < \alpha$ (see Lemma~\ref{lemma_char_and_moment}):
		$$C = \frac{|t| \mathbb{E}[|\xi|^{1+1/p}]}{(p+1)^{1/p}} \left(\int_{\mathbb{R}} |f''(\tau)|^{p/(p-1)} d\tau\right)^{1 - 1/p} < \infty.$$
	\end{proof}
	
	\medskip \begin{lemma}\label{lemma_al_sup_1remainder_term}
		We assume that $\alpha > 1$. Let $\gamma > 1$, $t \in \mathbb{R}$, and $\beta$ such that, $1 < \beta < \alpha$. Then, there exists a constant $C < \infty$ such that for $r^{\gamma} > 2$:
		\begin{equation}\label{eq_al_sup_1remainder_term}
			r^{\frac{\alpha -1}{\alpha}} \mathbb{E}\left[\left|\sum_{|x| > r^{\gamma}} f\left(\frac{x + \xi_x}r\right) \right|\right] \leq C \frac{r^{{(\alpha -1)/\alpha}}}{r^{\gamma(\beta-1)}}.
		\end{equation}
	\end{lemma}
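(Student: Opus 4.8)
The plan is to discard the common factor $r^{(\alpha-1)/\alpha}$ appearing on both sides of \eqref{eq_al_sup_1remainder_term} and to prove the parameter-free estimate
\[
\mathbb{E}\left[\left|\sum_{|x| > r^{\gamma}} f\left(\frac{x+\xi_x}{r}\right)\right|\right] \leq C\, r^{-\gamma(\beta-1)}
\]
for $r^{\gamma} > 2$. By the triangle inequality and the fact that the $\xi_x$ are identically distributed, with $\xi$ a generic random variable of characteristic function $\varphi$, this reduces to bounding the single series $\sum_{x \in \mathbb{Z},\, |x| > r^{\gamma}} \mathbb{E}\left[\left|f\left((x+\xi)/r\right)\right|\right]$.

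The heart of the argument is a pointwise-in-$x$ control of $\mathbb{E}\left[\left|f\left((x+\xi)/r\right)\right|\right]$, obtained by splitting the expectation along the event $\{|\xi| \leq |x|/2\}$ and its complement. On $\{|\xi| \leq |x|/2\}$ one has $|x+\xi| \geq |x|/2$; since $f \in \mathcal{S}(\mathbb{R})$ satisfies $|f(y)| \leq C_N (1+|y|)^{-N}$ for every $N \geq 1$, this gives $|f((x+\xi)/r)| \leq C_N (2r/|x|)^{N}$. On the complementary event I would bound $|f| \leq \|f\|_{\infty}$ and use $\mathbb{E}[|\xi|^{\beta}] < \infty$ — which holds because $\beta < \alpha$ by Lemma~\ref{lemma_char_and_moment} — together with Markov's inequality, to get $\mathbb{P}[|\xi| > |x|/2] \leq 2^{\beta}\,\mathbb{E}[|\xi|^{\beta}]\,|x|^{-\beta}$. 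Combining the two regimes, $\mathbb{E}\left[\left|f\left((x+\xi)/r\right)\right|\right] \leq C_N (2r)^{N}|x|^{-N} + C\,|x|^{-\beta}$.

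It then remains to sum over $x \in \mathbb{Z}$ with $|x| > r^{\gamma}$, a one-dimensional computation. Since $\beta > 1$, the second term yields $\sum_{|x| > r^{\gamma}} |x|^{-\beta} \leq C\, r^{\gamma(1-\beta)} = C\, r^{-\gamma(\beta-1)}$, precisely the target rate. For the first term, choosing $N$ large enough that $N(\gamma-1) \geq \gamma\beta$ — possible because $\gamma > 1$ — one gets $(2r)^{N}\sum_{|x| > r^{\gamma}} |x|^{-N} \leq C\, r^{N}\,r^{\gamma(1-N)} = C\, r^{N(1-\gamma)+\gamma} \leq C\, r^{-\gamma(\beta-1)}$. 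Reinstating the factor $r^{(\alpha-1)/\alpha}$ then gives \eqref{eq_al_sup_1remainder_term}. I do not expect a genuine obstacle here: the only point requiring care is that $\xi$ need not have a moment of order $\alpha$, which is exactly why the estimate is stated with an exponent $\beta$ strictly below $\alpha$ and why the large-$|\xi|$ regime must be handled through the tail bound $\mathbb{P}[|\xi| > |x|/2] \leq C|x|^{-\beta}$ rather than a higher-order moment; the matching super-polynomial decay of $f$, with an adjustable exponent $N$, then makes both pieces summable at the rate $r^{-\gamma(\beta-1)}$.
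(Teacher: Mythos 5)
Your proposal is correct and follows essentially the same route as the paper: split the expectation according to whether the perturbation is small (using the rapid decay of the Schwartz function $f$, with an adjustable exponent) or large (using Markov's inequality and the finite $\beta$-moment from Lemma~\ref{lemma_char_and_moment}), then sum over $|x|>r^{\gamma}$ with the decay exponent chosen large enough relative to $\gamma$ and $\beta$. The only cosmetic difference is that the paper conditions on $\{|x+\xi|\geq |x|/2\}$ rather than $\{|\xi|\leq |x|/2\}$ and fixes $k=\gamma\beta/(\gamma-1)$ explicitly, which is equivalent to your choice of $N$.
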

	\begin{proof}
		Let $k > 1$ that will be chosen in the end of the proof. Using the fact that $f \in \mathcal{S}(\mathbb{R})$ and the Markov inequality, we obtain for $|x|> 2$:
		\begin{align*}
			\mathbb{E}\left[ |f|\left(\frac{x+\xi}r\right)\right]&= \mathbb{E}\left[\mathbf{1}_{|x+\xi| \geq |x|/2}|f|\left(\frac{x+\xi}r\right)\right] +   \mathbb{E}\left[\mathbf{1}_{|x+\xi| < |x|/2}|f|\left(\frac{x+\xi}r\right)\right]\\& \leq \sup_{y \in \mathbb{R}} |y^{k} f(y)| 2^{k} \frac{r^{k}}{|x|^{k}} + \|f\|_{\infty} \mathbb{P}\left[|\xi| \geq |x|/2\right] \\& \leq \sup_{y \in \mathbb{R}} |y^{k} f(y)| 2^{k} \frac{r^{k}}{|x|^{k}} + \|f\|_{\infty} 2^{\beta} \frac{\mathbb{E}[|\xi|^{\beta}]}{|x|^{\beta}}.
		\end{align*}
		As $\beta < \alpha$ Lemma~\ref{lemma_char_and_moment} ensures that $\mathbb{E}[|\xi|^{\beta}] < \infty$. Hence, using $\beta > 1$ and $k > 1$, we obtain that there exists some constant $C < \infty$ such that for all $r^{\gamma} >2$:
		$$r^{\frac{\alpha -1}{\alpha}} \mathbb{E}\left[\left|\sum_{|x| > r^{\gamma}} f\left(\frac{x + \xi_x}r\right)\right|\right] \leq C r^{\frac{\alpha -1}{\alpha}}\left(\frac{r^k}{r^{\gamma(k-1)}} + \frac{1}{r^{\gamma(\beta -1)}}\right),$$
		Eventually, by choosing $k = \gamma \beta/(\gamma -1)> \gamma/(\gamma -1) > 1$, we obtain~\eqref{eq_al_sup_1remainder_term}.
	\end{proof}
	
	The next lemma confirms that it is possible to choose the parameters $\gamma, p$ and $\beta$, introduced in the two previous lemmas, such that both $\Delta$ and $r^{\frac{\alpha -1}{\alpha}} \sum_{|x| > r^{\gamma}} f\left(\frac{x + \xi_x}r\right)$ vanish when $r \to \infty$. 
	\medskip \begin{lemma}\label{lemma_numero}
		We assume that $\alpha > 1$. There exists $\gamma > 1,~p > 1$ and $\beta > 1$ such that:
		\begin{enumerate}
			\item \label{c1} $1+1/p < \alpha$,
			\item \label{c3} $\gamma + \frac{\alpha -1}{\alpha} < 1+1/p,$
			\item \label{c2}$1 < \beta < \alpha$,
			\item \label{c4} $\frac{\alpha -1}{\alpha} < \gamma(\beta -1)$.
		\end{enumerate}
	\end{lemma}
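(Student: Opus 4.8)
The plan is to exhibit the three parameters one after another — first $p$, then $\gamma$, then $\beta$ — choosing each from an interval that the previously fixed parameters leave nonempty. All four conditions are strict inequalities, so there is slack at every step and no boundary case needs care; the entire argument reduces to checking that a handful of elementary inequalities in $\alpha$ hold, and these are all immediate consequences of $\alpha > 1$.

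First I would pick $p$. Condition~\ref{c1} is equivalent to $p > 1/(\alpha-1)$, while condition~\ref{c3} together with the requirement $\gamma > 1$ forces $1 + 1/p - \tfrac{\alpha-1}{\alpha} > 1$, i.e. $p < \alpha/(\alpha-1)$; of course we also need $p > 1$. Thus $p$ must be chosen in $\bigl(\max(1,\,1/(\alpha-1)),\ \alpha/(\alpha-1)\bigr)$, and this interval is nonempty: when $\alpha \geq 2$ its left endpoint is $1 < \alpha/(\alpha-1)$, and when $1 < \alpha < 2$ it is $1/(\alpha-1)$, which is strictly less than $\alpha/(\alpha-1)$ since $1 < \alpha$. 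Fix any such $p$; note that by construction $1 + 1/p - \tfrac{\alpha-1}{\alpha} > 1$.

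Next I would pick $\gamma$ in the nonempty interval $\bigl(1,\ 1 + 1/p - \tfrac{\alpha-1}{\alpha}\bigr)$, which immediately yields $\gamma > 1$ and condition~\ref{c3}, while condition~\ref{c1} already holds by the choice of $p$. Finally, condition~\ref{c4} reads $\beta > 1 + \tfrac{\alpha-1}{\alpha\gamma}$, so I would pick $\beta$ in $\bigl(1 + \tfrac{\alpha-1}{\alpha\gamma},\ \alpha\bigr)$; this interval is nonempty because $\alpha\gamma > 1$ (as $\alpha > 1$ and $\gamma > 1$), whence $\tfrac{\alpha-1}{\alpha\gamma} < \alpha - 1$, i.e. $1 + \tfrac{\alpha-1}{\alpha\gamma} < \alpha$, and trivially $1 + \tfrac{\alpha-1}{\alpha\gamma} > 1$. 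Any $\beta$ in this interval then satisfies conditions~\ref{c2} and~\ref{c4}.

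There is no genuine obstacle here: the only thing to watch is the mild case distinction $\alpha \geq 2$ versus $\alpha < 2$ when verifying that the interval for $p$ is nonempty (equivalently, reconciling the constraints $p > 1$ and $p > 1/(\alpha-1)$), after which everything is a direct computation.
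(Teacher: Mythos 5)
Your proof is correct and follows essentially the same route as the paper: a sequential choice of $p$, then $\gamma$, then $\beta$ from exactly the same admissible intervals (the paper merely parametrizes the choice of $p$ through an auxiliary $\varepsilon\in(\alpha^{-1},1)$ with $p=(\varepsilon(\alpha-1))^{-1}$, which yields the same window $\bigl(1/(\alpha-1),\alpha/(\alpha-1)\bigr)$ you describe directly). Your explicit case split $\alpha\geq 2$ versus $\alpha<2$ even covers all $\alpha>1$, whereas the paper's quick check that $p>1$ implicitly uses $\alpha\leq 2$, the only regime it needs.
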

	\begin{proof}
		As $\alpha > 1$, there exists $\varepsilon > 0$ such that $\alpha^{-1} < \varepsilon < 1$. We choose $p = (\varepsilon(\alpha -1))^{-1}$, which satisfies $p > 1$ and the constraint~\ref{c1}:
		$$p = (\varepsilon(\alpha -1))^{-1} \geq (\varepsilon(2-1))^{-1} = \varepsilon^{-1} > 1, \qquad
		1+1/p = 1+\varepsilon(\alpha -1) < 1+ \alpha -1 = \alpha.$$
		We now choose $\gamma$ such that $1 < \gamma < \alpha^{-1} + \varepsilon(\alpha -1)$. It is possible since:
		$$\alpha^{-1} + \varepsilon(\alpha -1) > 1 \iff \varepsilon(\alpha - 1) > (\alpha-1)/\alpha \iff \varepsilon > \alpha^{-1}.$$
		This choice satisfies the constraint~\ref{c3}:
		$$\gamma + \frac{\alpha -1}{\alpha} <  \alpha^{-1} + \varepsilon(\alpha -1) + \frac{\alpha -1}{\alpha} = 1 + 1/p.$$
		We finally choose $\beta$ such that $1 + \frac{\alpha-1}{\alpha \gamma} < \beta < \alpha$. We can do so since $1 + \frac{\alpha-1}{\alpha \gamma} < 1 + \alpha -1 = \alpha$. Constraints~\ref{c2} and~\ref{c4} are satisfied with this choice, concluding the proof.
	\end{proof}
	
	The next lemma states that $\int_{\mathbb{R}} |f'(x)|^{\alpha} dx$ can be approximated by a convenient Riemann sum.
	\medskip \begin{lemma}\label{lemma_riemann_f_alpha}
		Let $\alpha > 1$ and $f \in \mathcal{S}(\mathbb{R})$. Then:
		$$\lim_{r \to \infty} r^{-1} \sum_{x \in \mathbb{Z}} |f'(x/r)|^{\alpha} = \int_{\mathbb{R}} |f'(x)|^{\alpha} dx.$$
	\end{lemma}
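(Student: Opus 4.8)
\textit{Proof sketch.} The plan is to read this as a Riemann-sum statement for the function $g := |f'|^{\alpha}$, which is continuous, nonnegative, and integrable on $\mathbb{R}$. Integrability and the required tail decay are inherited from $f \in \mathcal{S}(\mathbb{R})$: since $f' \in \mathcal{S}(\mathbb{R})$ we have $\sup_{y \in \mathbb{R}} (1+|y|)^{2/\alpha} |f'(y)| < \infty$, hence $g(y) = |f'(y)|^{\alpha} \leq A (1+|y|)^{-2}$ for some finite $A$, so $g \in L^{1}(\mathbb{R})$ and $\int_{|t| \geq N} g(t)\,dt \to 0$ as $N \to \infty$. The goal is then to show $r^{-1} \sum_{x \in \mathbb{Z}} g(x/r) \to \int_{\mathbb{R}} g$.

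First I would write the error as a sum of local increments, using that the intervals $[x/r,(x+1)/r]$ tile $\mathbb{R}$:
$$\left| r^{-1} \sum_{x \in \mathbb{Z}} g(x/r) - \int_{\mathbb{R}} g(t)\,dt \right| \leq \sum_{x \in \mathbb{Z}} \int_{x/r}^{(x+1)/r} \left| g(x/r) - g(t) \right| dt,$$
and split the outer sum according to whether $|x| \leq Nr$ or $|x| > Nr$, for a parameter $N$ to be fixed later. For the tail $|x| > Nr$, bounding $|g(x/r) - g(t)| \leq g(x/r) + g(t)$ gives a contribution at most $r^{-1} \sum_{|x| > Nr} g(x/r) + \int_{|t| \geq N-1} g(t)\,dt$; using $g(y) \leq A(1+|y|)^{-2}$, the first term is $\leq A r \sum_{|x| > Nr} |x|^{-2} \leq C/N$ uniformly in $r \geq 1$, and the second tends to $0$ as $N \to \infty$. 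For the central block $|x| \leq Nr$, I would invoke the uniform continuity of $g$: since $f'$ maps $\mathbb{R}$ into the compact interval $[-\|f'\|_{\infty}, \|f'\|_{\infty}]$ and $u \mapsto |u|^{\alpha}$ is uniformly continuous on that interval, $g = |f'|^{\alpha}$ is uniformly continuous on $\mathbb{R}$, with a modulus of continuity $\omega(\delta) \to 0$ as $\delta \to 0$. Each of the at most $2Nr + 1$ integrals in the central block is then bounded by $r^{-1} \omega(1/r)$, so the central contribution is at most $3 N \omega(1/r)$ for $r \geq 1$.

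Combining these, for every $N \geq 2$ and every $r \geq 1$,
$$\left| r^{-1} \sum_{x \in \mathbb{Z}} g(x/r) - \int_{\mathbb{R}} g \right| \leq 3 N\, \omega(1/r) + \frac{C}{N} + \int_{|t| \geq N-1} g(t)\,dt.$$
Given $\varepsilon > 0$, I would first fix $N$ large enough that the last two terms are $< \varepsilon/2$, and then let $r \to \infty$, so that $3N\,\omega(1/r) < \varepsilon/2$; this yields the claim. I do not expect a genuine obstacle here; the only points requiring a little care are the uniform-in-$r$ control of the tail of the Riemann sum (handled by the rapid decay of $f'$) and the uniform continuity of $|f'|^{\alpha}$, which holds because $f'$ is bounded, even though $u \mapsto |u|^{\alpha}$ fails to be globally Lipschitz when $\alpha > 1$. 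An alternative route via the Poisson summation formula, writing $r^{-1}\sum_{x \in \mathbb{Z}} g(x/r) = \int_{\mathbb{R}} g + \sum_{k \neq 0} \mathcal{F}[g](rk)$, is also available but is less clean, since $g = |f'|^{\alpha}$ is typically only $C^{1}$ and $\mathcal{F}[g]$ need not be summable without an additional argument.
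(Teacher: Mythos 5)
Your argument is correct and follows the same basic strategy as the paper -- compare the normalized sum with $\int|f'|^{\alpha}$ via the tiling by intervals $[x/r,(x+1)/r]$ and truncate the sum -- but the technical execution differs in two respects. First, you truncate at $|x|\leq Nr$ with $N$ fixed and pass to a double limit ($r\to\infty$ for fixed $N$, then $N\to\infty$), whereas the paper truncates at $|x|\leq r^{q}$ with $1<q<\alpha$ and kills the tail using the rapid decay of $f'$. Second, and more substantially, the paper controls the local increments quantitatively: it writes $|a|^{\alpha}-|b|^{\alpha}$ as $|a|^{\alpha-1}(|a|-|b|)+|b|(|a|^{\alpha-1}-|b|^{\alpha-1})$ and uses the concavity of $u\mapsto|u|^{\alpha-1}$ (hence implicitly $\alpha\leq 2$, which is the range relevant to Theorem~\ref{thm_cv_alpha_stable}) together with the mean value theorem to get the H\"older-type bound $C(|h|+|h|^{\alpha-1})$, which yields an explicit rate $O(r^{q-\alpha})$. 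You instead invoke only the qualitative uniform continuity of $g=|f'|^{\alpha}$, which makes your proof slightly more elementary, valid for every $\alpha>1$ without the concavity step, but without a rate (none is needed for the lemma). One small point of care: boundedness of $f'$ plus uniform continuity of $u\mapsto|u|^{\alpha}$ on the compact range is not by itself enough to conclude uniform continuity of $|f'|^{\alpha}$ (think of $\sin(y^{2})$); you also need uniform continuity of $f'$ itself, which here is immediate since $f\in\mathcal{S}(\mathbb{R})$ implies $f'$ is Lipschitz with constant $\|f''\|_{\infty}$. With that one-line addition your proof is complete.
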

	\begin{proof}
		Let $1 < q < \alpha$. Using the fact that $f \in \mathcal{S}(\mathbb{R})$, we truncate the sum:
		\begin{align*}
			r^{-1} \sum_{|x| > r^q} |f'(x/r)|^{\alpha} &\leq r^{-1} \sup_{y \in \mathbb{R}} |y^{2} |f'(y)|^{2\alpha}| \sum_{|x| > r^q}  \frac{r^{2}}{|x|^{2}} \leq \sup_{y \in \mathbb{R}} |y^{2} |f'(y)|^{2 \alpha }| \frac1{r^{q-1}} \to 0,
		\end{align*}
		when $r \to \infty$. Accordingly it suffices to prove that
		$\lim_{r \to \infty} r^{-1} \sum_{|x| \leq r^q} |f'(x/r)|^{\alpha} = \int_{\mathbb{R}} |f'(x)|^{\alpha} dx.$
		To do so, we use the standard Riemann-sum procedure:
		\begin{align*}
			A &:= r^{-1} \sum_{|x| \leq r^q} |f'(x/r)|^{\alpha} - \int_{\mathbb{R}} |f'(x)|^{\alpha} dx \\& = \sum_{|x| \leq r^q} \int_{x/r}^{\frac{x+1}r} \left(|f'(x/r)|^{\alpha} - |f'(y)|^{\alpha}\right) dy - \int_{\mathbb{R}\setminus[-r^{q-1}, (r^q+1)/r]}|f'(y)|^{\alpha} dy.
		\end{align*}
		Since $|f'|^{\alpha} \in L^1(\mathbb{R})$ and $q > 1$, the integral over $\mathbb{R}\setminus[-r^{q-1}, (r^q+1)/r]$ vanishes when $r \to \infty$. Moreover, $1 < \alpha \leq 2$, so we have, by concavity of $x \mapsto |x|^{\alpha -1}$ and by the mean value~theorem:
		\begin{align*}
			\left||f'(\frac{x}r)|^{\alpha} - |f'(y)|^{\alpha}\right| &\leq |f'(\frac{x}r)|^{\alpha-1}\left||f'(\frac{x}r)| - |f'(y)|\right|+|f'(y)|\left||f'(\frac{x}r)|^{\alpha-1} - |f'(y)|^{\alpha-1}\right| \\& \leq \|f'\|_{\infty}^{\alpha-1} \|f''\|_{\infty}\left|\frac{x}r - y\right| + \|f'\|_{\infty} \|f''\|_{\infty}^{\alpha -1} \left|\frac{x}r - y\right|^{\alpha -1} \\& \leq C \left(\left|\frac{x}r - y\right| +  \left|\frac{x}r - y\right|^{\alpha -1}\right),
		\end{align*}
		where $C  = \|f'\|_{\infty}^{\alpha-1} \|f''\|_{\infty} +\|f'\|_{\infty} \|f''\|_{\infty}^{\alpha -1}$. Accordingly, we obtain when $r \to \infty$:
		\begin{align*}
			|A| &\leq o(1) +  C \sum_{|x| \leq r^q} \int_{x/r}^{\frac{x+1}r} \left(\left|\frac{x}r - y\right| +  \left|\frac{x}r - y\right|^{\alpha -1}\right)dy \\& \leq o(1) +  C \sum_{|x| \leq r^q} \left(\frac12 r^{-2} + \frac1{\alpha} r^{-\alpha}\right) \leq o(1) + \frac{4C}{\alpha} \frac1{r^{\alpha-q}} \to 0.
		\end{align*}
		This concludes the proof.
	\end{proof}
	
	Gathering previous lemmas, we now prove Theorem~\ref{thm_cv_alpha_stable}. 
	\begin{proof}[Proof of Theorem~\ref{thm_cv_alpha_stable}]
		We consider the parameters $\beta, p, \gamma$ of Lemma~\ref{lemma_numero}. According to Lemma~\ref{lemma_al_sup_1remainder_term}, the remainder term $r^{\frac{\alpha - 1}{\alpha}} \sum_{|x| > r^{\gamma}} f\left(\frac{x + \xi_x}r\right)$ converges to 0 in $L^1(\mathbb{P})$ and thus in probability. Consequently, according to the Slutsky's lemma, it is enough to prove that:
		$$r^{\frac{\alpha - 1}{\alpha}} \left(\sum_{|x| \leq r^{\gamma}} f\left(\frac{x + \xi_x}r\right) - r \int_{\mathbb{R}} f(x)dx\right) \xrightarrow[r \to \infty]{d} \left(\int_{\mathbb{R}} |f'(x)|^{\alpha} dx\right)^{1/\alpha} S_{\alpha}.$$
		According to~\eqref{eq_al_sup_1_main_term} and with $p$ and $\beta$ of Lemma~\ref{lemma_numero}, we only need to prove that, for all $t \in \mathbb{R}$
		\begin{align*}
			\phi_r(t) := \exp\left(- 2\pi\bm{i}t  r^{\frac{\alpha-1}\alpha} r \int_{\mathbb{R}} f(x) dx\right)
			\times \prod_{|x| \leq r^{\gamma}} \mathbb{E}\left[\exp\left(2\pi \bm{i} t  \left(f\left(\frac{x}r\right) + f'\left(\frac{x}r\right) \frac{\xi_x}r\right)r^{\frac{\alpha-1}\alpha}\right)\right]
		\end{align*}
		converges toward $\exp\left(-\int_{\mathbb{R}} |f'(x)|^{\alpha} dx |t|^{\alpha}\right)$.
		Using the independence of $(\xi_i)_{i \in \mathbb{Z}^d}$, we get
		\begin{align*}
			\phi_r(t) &= \exp\left( 2\pi \bm{i} t  r^{(\alpha-1)/\alpha} \left(\sum_{|x| \leq r^{\gamma}} f(x/r) - r \int_{\mathbb{R}} f(x) dx\right)\right) \prod_{|x| \leq r^{\gamma}} \varphi\left(t \frac{f'(x/r)}{r^{1/\alpha}}\right).
		\end{align*}
		We first prove that:
		\begin{equation}\label{eq_sup_1_moyenne}
			\lim_{r \to \infty} r^{(\alpha-1)/\alpha} \left(\sum_{|x| \leq r^{\gamma}} f(x/r) - r \int_{\mathbb{R}} f(x) dx\right) = 0.
		\end{equation}
		Applying the Poisson summation formula (see Theorem~\ref{thm_poisson}) to $f \in \mathcal{S}(\mathbb{R})$, we get
		\begin{align*}
			\sum_{|x| \leq r^{\gamma}} f\left(\frac{x}r\right) &= \sum_{x \in \mathbb{Z}} f\left(\frac{x}r\right) - \sum_{|x| > r^{\gamma}} f\left(\frac{x}r\right) = r \sum_{x \in \mathbb{Z}} \mathcal{F}[f](rx) - \sum_{|x| > r^{\gamma}} f\left(\frac{x}r\right).
		\end{align*}
		Consequently:
		$$ r^{\frac{\alpha -1}{\alpha}} \left(\sum_{|x| \leq r^{\gamma}} f\left(\frac{x}r\right) - r \int_{\mathbb{R}} f(x) dx\right)= r^{\frac{\alpha -1}{\alpha}} \left(r \sum_{x \in \mathbb{Z}\setminus\{0\}} \mathcal{F}[f](rx) - \sum_{|x| > r^{\gamma}} f\left(\frac{x}r\right)\right).$$
		Lemma~\ref{lemma_tail} applied to $\xi \equiv 0$ implies $r^{\frac{\alpha -1}{\alpha}} r \sum_{x \in \mathbb{Z}\setminus\{0\}} \mathcal{F}[f](rx) \to 0$, as $r \to \infty$. Moreover 
		\begin{align}\label{eq_remain_f_r_gamma}
			\forall k > 1,~r^{\frac{\alpha -1}{\alpha}}  \sum_{|x| > r^{\gamma}} |f|(x/r) \leq r^{\frac{\alpha -1}{\alpha}} \frac{r^k}{r^{\gamma(k-1)}} \sup_{y \in \mathbb{R}}|y^k f(y)| 2 (k-1)^{-1}.
		\end{align}
		Choosing $k > \left(\frac{\alpha -1}{\alpha} + 1 + \gamma\right)(\gamma-1)^{-1}$, we obtain~\eqref{eq_sup_1_moyenne}. The last part consists in proving that
		\begin{equation}\label{eq_limit_phi_r}
			\lim_{r \to \infty} \prod_{|x| \leq r^{\gamma}} \varphi\left(t \frac{f'(x/r)}{r^{1/\alpha}}\right) = \exp\left(- |t|^{\alpha}\int_{\mathbb{R}} |f'(x)|^{\alpha} dx\right).
		\end{equation}
		Using the bound~\eqref{eq_diff_prod} we obtain, for all $r \geq 2c |t|^{\alpha} \|f'\|_{\infty} =: r_0$:
		\begin{multline*}
			\Big|\prod_{|x| \leq r^{\gamma}} \varphi\left(t \frac{f'(x/r)}{r^{1/\alpha}}\right) - \prod_{|x| \leq r^{\gamma}} \left(1 - c |t|^{\alpha} \frac{|f'(x/r)|^{\alpha}}{r} \right)\Big| \\ \leq \sum_{|x| \leq r^{\gamma}} \left|\varphi\left(t \frac{f'(x/r)}{r^{1/\alpha}}\right) - \left(1 - c |t|^{\alpha} \frac{|f'(x/r)|^{\alpha}}{r}\right)\right|. 
		\end{multline*}
		Let $\varepsilon > 0$. The assumption on $1 - \varphi$ near the origin ensures that there exists $\delta > 0$ such that for all $|k| \leq \delta$, then $\left|\varphi(k) - (1 - c |k|^{\alpha})\right| \leq \varepsilon |k|^{\alpha}.$ Thus, for $r \geq \left(|t| \|f'\|_{\infty}/\delta\right)^{\alpha} \wedge r_0 =: r_1$,
		\begin{align*}
			\left|\prod_{|x| \leq r^{\gamma}} \varphi\left(t \frac{f'(x/r)}{r^{1/\alpha}}\right) - \prod_{|x| \leq r^{\gamma}} \left(1 - c |t|^{\alpha} \frac{|f'(x/r)|^{\alpha}}{r} \right)\right| \leq \varepsilon  |t|^{\alpha} \sum_{|x| \leq r^{\gamma}} \frac{|f'(x/r)|^{\alpha}}{r}. 
		\end{align*}
		Using Lemma~\ref{lemma_riemann_f_alpha} thereafter, we obtain that, there exists $r_2 \geq r_1$ such that for all $r \geq r_2$:
		\begin{align}\label{eq_phi_r_1}
			\left|\prod_{|x| \leq r^{\gamma}} \varphi\left(t \frac{f'(x/r)}{r^{1/\alpha}}\right) - \prod_{|x| \leq r^{\gamma}} \left(1 - c |t|^{\alpha} \frac{|f'(x/r)|^{\alpha}}{r} \right)\right| \leq 2\varepsilon  |t|^{\alpha}  \int_{\mathbb{R}} |f'(x)|^{\alpha} dx.
		\end{align}
		Since $\log(1+x) \sim x$ as $|x| \to 0$, we get with the same arguments leading to~\eqref{eq_phi_r_1} but applied to $\log(1+x)$ instead of $\varphi$, that there exists $r_3 \geq r_2$ such that for all $r \geq r_3$:
		\begin{align}\label{eq_phi_r_2}
			\left|\sum_{|x| \leq r^{\gamma}} \log\left(1 - c |t|^{\alpha} \frac{|f'(x/r)|^{\alpha}}{r}\right) - r^{-1}\sum_{|x| \leq r^{\gamma}}c |t|^{\alpha} |f'(x/r)|^{\alpha} \right| \leq 2\varepsilon  |t|^{\alpha}  \int_{\mathbb{R}} |f'(x)|^{\alpha} dx.
		\end{align}
		Accordingly, Lemma~\ref{lemma_riemann_f_alpha} and~\eqref{eq_phi_r_2} yield
		$$\lim_{r \to \infty} \prod_{|x| \leq r^{\gamma}} \left(1 - c |t|^{\alpha} \frac{|f'(x/r)|^{\alpha}}{r} \right) = \exp\left(- |t|^{\alpha}c \int_{\mathbb{R}} |f'(x)|^{\alpha} dx\right).$$
		Finally, using~\eqref{eq_phi_r_1}, we obtain the convergence~\eqref{eq_limit_phi_r}, which concludes the proof.
		\end{proof}	
	
	\section{Generalization to stationary perturbed lattices}\label{sec_stat}
	
	In this section, we prove that the statements of the theorems of the previous sections are unchanged when considering the stationary perturbed lattice $\Phi^{1}$, defined in~\eqref{eq:def_phi}.
	
	\medskip \begin{theorem}\label{thm_cv_stat}
		Theorems~\ref{thm_tcl_var_unbouded},~\ref{thm_tcl_var_bouded},~\ref{thm_tcl_var_unbounded_2_ss},~\ref{thm_tcl_var_unbounded_2},~\ref{thm_no_clt_alpha_1},~\ref{thm_cv_alpha_stable} and~\ref{thm_cv_stat} remain valid if $T_r^0$ is replaced by $T_r^1$.
	\end{theorem}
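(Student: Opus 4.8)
The plan is to condition on the common shift $U$, reducing each assertion to the already-established non-stationary case at the cost of translating the test function $f$ by the vanishing vector $U/r$.

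First I would fix $u\in[-1/2,1/2]^d$ and put $f_{u,r}:=f(\,\cdot+u/r\,)\in\mathcal S(\mathbb R^d)$. Conditionally on $U=u$ the points $x+u+\xi_x$ remain independent over $x\in\mathbb Z^d$, and since $f\big((x+u+\xi_x)/r\big)=f_{u,r}\big((x+\xi_x)/r\big)$, the conditional law of $T_r^1$ given $U=u$ is that of the non-stationary statistic $\sum_{x\in\mathbb Z^d}f_{u,r}\big((x+\xi_x)/r\big)$, that is, of ``$T_r^0$ built from $f_{u,r}$''. I would then run, with $f_{u,r}$ in the role of $f$, the proof of whichever of the theorems listed in Theorem~\ref{thm_cv_stat} is being treated. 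This goes through essentially automatically because a translation acts unimodularly on the Fourier side, $\mathcal F[f_{u,r}^{\,\ell}](k)=e^{-2\pi\bm{i}\,k\cdot u/r}\,\mathcal F[f^{\ell}](k)$: hence $|\mathcal F[f_{u,r}^{\,\ell}]|=|\mathcal F[f^{\ell}]|$, every $L^p$-norm of $\mathcal F[f_{u,r}^{\,\ell}]$ coincides with that of $\mathcal F[f^{\ell}]$, $\int_{\mathbb R^d}f_{u,r}=\int_{\mathbb R^d}f$, $\int_{\mathbb R}|f_{u,r}'|^{\alpha}=\int_{\mathbb R}|f'|^{\alpha}$, and $\{f_{u,r}\}_{u,r}$ stays bounded in $\mathcal S(\mathbb R^d)$ since $|u/r|\le\sqrt d/2$. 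Consequently the variance expansion~\eqref{eq_var} and Proposition~\ref{prop_var} have unchanged main terms (they see $f$ only through $|\mathcal F[f]|$); the cumulant bounds~\eqref{eq_bound_kappa_m} and~\eqref{eq_bound_kappa_m_L} and the lower bound of Lemma~\ref{lemma_var_wtr_tail} hold with the same constants; the error term in Proposition~\ref{prop_kappa} stays $O(r^{-\beta})$ with a constant uniform in $(u,r)$ by Lemma~\ref{lemma_tail_0}; the combinatorial identities of Lemma~\ref{lemma_cmb} are test-function free; and the Taylor and Riemann-sum estimates of Section~\ref{sec_no_clt} carry over, Lemma~\ref{lemma_riemann_f_alpha} applied to the shifted Riemann sum giving the same limit. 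Crucially the limiting objects are unchanged: the limiting variances and the limiting cumulants of Theorems~\ref{thm_tcl_var_bouded} and~\ref{thm_no_clt_alpha_1} involve $\mathcal F[f^{\ell}]$ only through cyclic convolutions evaluated at the origin, where the accumulated phases $\prod_j e^{\pm2\pi\bm{i}(x_j-x_{j-1})\cdot u/r}$ telescope to $1$ because $x_0=x_n=0$, and $\int_{\mathbb R}|f'|^{\alpha}dx$ is preserved exactly. Hence for every fixed $u$ and $t$ the conditional characteristic function $\psi_r(u,t):=\mathbb E[\,e^{2\pi\bm{i}tZ_r^1}\mid U=u\,]$ converges, as $r\to\infty$, to $\psi(t)$, the characteristic function of the limit law asserted by the relevant theorem, where $Z_r^1$ is the corresponding renormalization of $T_r^1$ and $\psi$ is independent of $u$.

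Finally I would integrate out $U$: since $|\psi_r(u,t)|\le1$ and $[-1/2,1/2]^d$ has finite Lebesgue measure, dominated convergence gives $\mathbb E[e^{2\pi\bm{i}tZ_r^1}]=\int_{[-1/2,1/2]^d}\psi_r(u,t)\,du\to\psi(t)$ for every $t$, and Lévy's continuity theorem yields convergence in distribution of $Z_r^1$ to the limit claimed in each of the listed theorems; the same conditioning reproves $\mathbb E[T_r^1]=r^d\int_{\mathbb R^d}f+O(r^{-\beta})$ from~\eqref{eq_exp}, in accordance with Campbell's averaging formula. The only point requiring genuine attention is the uniformity in $u$ of the constants obtained when the $r$-dependent function $f_{u,r}$ replaces $f$ throughout Sections~\ref{sec_exp_var}--\ref{sec_non_gauss}; this is where Lemma~\ref{lemma_tail_0} and the Schwartz-seminorm estimates must be revisited, but since $u/r$ is bounded and the shift is Fourier-unimodular these bounds reduce immediately to the $u=0$ case, and in any event the dominated-convergence step needs only pointwise-in-$u$ convergence of $\psi_r(u,\cdot)$, so that uniformity can even be dispensed with.
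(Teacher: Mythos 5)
Your proposal is correct in substance but follows a genuinely different route from the paper. You condition on $U=u$, observe that the conditional law of $T_r^1$ is that of a non-stationary statistic built from the translate $f(\cdot+u/r)$, and rerun the proofs of Sections~\ref{sec_exp_var}--\ref{sec_non_gauss} for this $r$-dependent test function; the key observations that make this work — $|\mathcal{F}[f(\cdot+u/r)^{\ell}]|=|\mathcal{F}[f^{\ell}]|$, so that all variance main terms, Young-type cumulant bounds and the constant in Lemma~\ref{lemma_tail_0} are unchanged (uniformly over $|u|\le\sqrt d/2$), and the telescoping of the phases $e^{\mp2\pi\bm{i}(x_i-x_{i-1})\cdot u/r}$ in the cyclic convolutions evaluated at $0$ (with $x_0=x_n=0$), so that the limiting cumulants and the limiting characteristic function in the stable case are literally those of the $u=0$ case — are sound, and the final step (dominated convergence of the conditional characteristic functions plus Lévy continuity) is valid. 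The paper proceeds quite differently and more economically: it proves in Lemma~\ref{lemma_diff_non_stat_stat} that $\mathbb{E}[(T_r^0-T_r^1)^2]=o(\operatorname{Var}[T_r^0])$, via the law of total variance, the identity $\mathbb{E}[e^{2\bm{i}\pi U\cdot x}]=\mathbf{1}\{x=0\}$, and a bootstrap use of Lemma~\ref{lemma_var_d3} (applied both to $f$ and to the gradient statistic controlling $h(r)$), and then transfers every limit theorem at once by Slutsky; this single $L^2$-comparison also yields the extension to non-centered perturbations in Remark~\ref{rmk_non_centered_rv} by taking $U$ constant. What your approach buys is conceptual transparency (the conditional statistic is exactly a shifted $T_r^0$) at the cost of re-verifying every estimate for the moving translate; the one point you should make explicit rather than leave implicit is the treatment of the random normalization in Theorems~\ref{thm_tcl_var_unbouded} and~\ref{thm_tcl_var_unbounded_2_ss}: the statement for $T_r^1$ involves $\operatorname{Var}[T_r^1]$, so you need $\operatorname{Var}[T_r^1]\sim\operatorname{Var}[T_r^1\mid U=u]$ uniformly in $u$, which follows from the law of total variance together with $\operatorname{Var}[\mathbb{E}[T_r^1\mid U]]=O(r^{-\beta})$ (the computation of~\eqref{eq_exp_T1}), but is not a consequence of pointwise-in-$u$ convergence alone.
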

	\medskip
	Theorem~\ref{thm_cv_stat} is still based on the Poisson summation formula and uses several times the fact that, the uniform random variable $U$, in the Fourier domain, will introduce terms $e^{2\bm{i} \pi U.x}$ with $x \in \mathbb{Z}^d$. This leads to useful cancellations. To prove Theorem~\ref{thm_cv_stat}, we use two lemmas. Lemma~\ref{lemma_exp_exp_stat} derives the first moment of $T_r^1$ and Lemma~\ref{lemma_diff_non_stat_stat} proves that the difference $T_r^0 - T_r^1$ converges to $0$ in $L^{2}(\mathbb{P})$ fast enough regarding $\operatorname{Var}[T_r^0]$.
	
	\medskip \begin{lemma}\label{lemma_exp_exp_stat}
		Let $d \geq 1$, $f \in \mathcal{S}(\mathbb{R}^d)$,  $(\xi_x)_{x \in \mathbb{Z}^d}$ i.i.d. and $U$ be a uniform random variable on $[-1/2, 1/2]^d$ which is independent of $(\xi_x)_{x \in \mathbb{Z}^d}$. Then, 
		\begin{equation}\label{eq_expec_stat}
			\mathbb{E}[T_r^{1}] = r^d\int_{\mathbb{R}^d} f(x) dx.
		\end{equation}
	\end{lemma}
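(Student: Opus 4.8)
The plan is to recognise \eqref{eq_expec_stat} as Campbell's first-moment formula for the stationary process $\Phi^1$, and to prove it directly by averaging out the common uniform shift $U$. The guiding observation is that, once $U$ is integrated over a fundamental cube, the translates $x+[-1/2,1/2]^d$, $x\in\mathbb{Z}^d$, tile $\mathbb{R}^d$, so both the lattice structure and the perturbations disappear from the answer and only $\int f$ remains.

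Concretely, I would proceed in three steps. \emph{Step 1 (Fubini).} Since $f\in\mathcal{S}(\mathbb{R}^d)$, fix $N>d$ and a constant $C_N$ with $|f(y)|\le C_N(1+|y|)^{-N}$. For $r\ge 1$ this gives $|f((x+U+\xi_x)/r)|\le C_N r^{N}(1+|x+U+\xi_x|)^{-N}$, hence
\[
\sum_{x\in\mathbb{Z}^d}\mathbb{E}\bigl[|f((x+U+\xi_x)/r)|\bigr]\le C_N r^{N}\,\mathbb{E}\Bigl[\sum_{x\in\mathbb{Z}^d}(1+|x+U+\xi_0|)^{-N}\Bigr]<\infty,
\]
because $w\mapsto\sum_{x\in\mathbb{Z}^d}(1+|x+w|)^{-N}$ is bounded on $\mathbb{R}^d$ when $N>d$. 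Thus $\mathbb{E}[T_r^1]=\sum_{x\in\mathbb{Z}^d}\mathbb{E}[f((x+U+\xi_x)/r)]$, with no moment assumption on the perturbations. \emph{Step 2 (condition and integrate $U$).} Using the independence of $U$ and $\xi_x$ and that $\xi_x$ has the common law of a generic perturbation $\xi$, write $\mathbb{E}[f((x+U+\xi_x)/r)]=\mathbb{E}_\xi\bigl[\int_{[-1/2,1/2]^d}f((x+u+\xi)/r)\,du\bigr]$. \emph{Step 3 (tiling and rescaling).} Swapping $\mathbb{E}_\xi$ and $\sum_x$ (justified by the same bound as in Step 1), for fixed $\xi$ the substitution $v=x+u$ turns $\sum_x\int_{[-1/2,1/2]^d}f((x+u+\xi)/r)\,du$ into $\sum_x\int_{x+[-1/2,1/2]^d}f((v+\xi)/r)\,dv=\int_{\mathbb{R}^d}f((v+\xi)/r)\,dv=r^d\int_{\mathbb{R}^d}f(w)\,dw$, the last equality by the change of variables $w=(v+\xi)/r$. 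Since this is independent of $\xi$, applying $\mathbb{E}_\xi$ leaves it unchanged, which is \eqref{eq_expec_stat}.

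The only genuine subtlety — and the one place I would be careful — is Step 1: interchanging expectation and summation without assuming any moments for $(\xi_x)_{x\in\mathbb{Z}^d}$. This is exactly what the uniform lattice-sum bound $\sup_{w\in\mathbb{R}^d}\sum_{x\in\mathbb{Z}^d}(1+|x+w|)^{-N}<\infty$ (for $N>d$) provides, and it is the reason the Schwartz hypothesis on $f$ is convenient here. An alternative route, more in line with the Poisson-summation theme of this section, would be to apply Theorem~\ref{thm_poisson} to $v\mapsto\sum_{x\in\mathbb{Z}^d}f((x+v)/r)$, obtaining the Fourier series $\sum_{k\in\mathbb{Z}^d}r^d\mathcal{F}[f](rk)e^{-2\pi\bm{i}k\cdot v}$, and then noting that integrating the factor $e^{-2\pi\bm{i}k\cdot(U+\xi_0)}$ over $U$ uniform on $[-1/2,1/2]^d$ kills every term with $k\neq 0$, leaving only $r^d\mathcal{F}[f](0)=r^d\int_{\mathbb{R}^d}f$; the tiling argument above, however, sidesteps any question about convergence of that Fourier series and is the version I would write up.
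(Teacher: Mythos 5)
Your proof is correct, but it follows a different route from the paper. You prove the identity by hand via the tiling argument: Fubini (justified by the Schwartz decay of $f$ and the uniform bound $\sup_w\sum_{x\in\mathbb{Z}^d}(1+|x+w|)^{-N}<\infty$), integration of the common shift $U$ over the fundamental cube, and the observation that the cubes $x+[-1/2,1/2]^d$ tile $\mathbb{R}^d$, after which the dependence on $\xi$ disappears by translation invariance of Lebesgue measure. This is in effect a self-contained proof of Campbell's averaging formula in this special case, which is exactly the formula the paper invokes as the ``direct'' justification (Theorem 1.2.5 of the cited reference) before deliberately choosing another argument. The paper instead applies its Poisson-summation identity \eqref{eq_kappa_int} with $m=1$ to $f(\cdot+U)$ conditionally on $U$, obtaining the Fourier representation $\mathbb{E}[T_r^1\,|\,U]=r^d\sum_{x\in\mathbb{Z}^d}e^{2\bm{i}\pi U\cdot x}\overline{\mathcal{F}[f]}(rx)\varphi(x)$ of \eqref{eq_exp_T1}, and then kills all nonzero frequencies using $\mathbb{E}[e^{2\bm{i}\pi U\cdot x}]=\mathbf{1}\{x=0\}$; this is essentially the alternative you sketch in your last paragraph (and the convergence worry you raise there is harmless, since $\mathcal{F}[f]$ is Schwartz so the series converges absolutely). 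What the paper's route buys is precisely the conditional identity \eqref{eq_exp_T1}, which is reused in the proof of Lemma~\ref{lemma_diff_non_stat_stat} to control $\mathbb{E}[T_r^0-T_r^1\,|\,U]$; what your route buys is a more elementary argument that avoids Fourier series altogether and gives the exact identity for every $r>0$ (note only that your Step 1 bound as written assumes $r\ge 1$; for $r<1$ the even simpler bound $(1+|y|/r)^{-N}\le(1+|y|)^{-N}$ applies, so this is cosmetic).
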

	\begin{proof}
		Equation~\eqref{eq_expec_stat} is a direct application of the Campbell's averaging formula for stationary point processes (see Theorem 1.2.5 of~\cite{baccelli2020random}). However, we propose a different approach, that will be useful in the following. Using~\eqref{eq_kappa_int} with $m= 1$ and $f(\cdot + U)$, we get (where $f_r = f(\cdot/r)$):
		\begin{align}\label{eq_exp_T1}
			\mathbb{E}[T_r^{1}|U] = \sum_{x \in \mathbb{Z}^d} \overline{\mathcal{F}[f_r(\cdot + U/r)]}(x) \varphi(x)  = r^d \sum_{x \in \mathbb{Z}^d} e^{2\bm{i} \pi U.x} \overline{\mathcal{F}[f]}(rx) \varphi(x).
		\end{align}
		Using the Fubini's theorem and $\mathbb{E}[e^{2\bm{i} \pi U.x}] = \mathbf{1}\{x = 0\}$, we obtain
		\begin{equation*}
			\mathbb{E}[T_r^{1}]= r^d \sum_{x \in \mathbb{Z}^d} \mathbb{E}[e^{2\bm{i} \pi U.x}] \overline{\mathcal{F}[f]}(rx) \varphi(x) = r^d \mathcal{F}[f](0) \varphi(0) = r^d \int_{\mathbb{R}^d} f(x) dx.
		\end{equation*} 
	\end{proof}
	
	\medskip \begin{lemma}\label{lemma_diff_non_stat_stat}
		Let $d \geq 1$ and $f \in \mathcal{S}(\mathbb{R}^d)$. We consider $(\xi_x)_{x \in \mathbb{Z}^d}$ i.i.d. random variables. Let $U$ be a uniform random variable on $[-1/2, 1/2]^d$ which is independent of $(\xi_x)_{x \in \mathbb{Z}^d}$. Then, we have when $r \to \infty$,
		\begin{equation}\label{eq_diff1}
			\mathbb{E}[(T_r^0 - T_r^{1})^2] = o\left(\operatorname{Var}[T_r^0]\right).
		\end{equation}
	\end{lemma}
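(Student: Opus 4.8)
The plan is to condition on $U$, which turns $T_r^1-T_r^0$ into the linear statistic of an independently perturbed lattice built from the ($r$-dependent) test function $g_U:=f\!\big(\tfrac{\cdot+U}{r}\big)-f\!\big(\tfrac{\cdot}{r}\big)$, evaluate its conditional variance in the Fourier domain via the Poisson summation formula, and then compare the resulting $o(r^{d-2})$ bound with the matching lower bound $\operatorname{Var}[T_r^0]\gtrsim r^{d-2}$. Conditionally on $U$ we have $T_r^1-T_r^0=\sum_{x\in\mathbb Z^d} g_U(x+\xi_x)$ with $g_U\in\mathcal S(\mathbb R^d)$ and $\mathcal F[g_U](k)=(e^{-2\pi\bm i\,k\cdot U}-1)\,r^d\mathcal F[f](rk)$, and since $\mathbb E[(T_r^1-T_r^0)^2]=\mathbb E_U\big[\operatorname{Var}[T_r^1-T_r^0\mid U]\big]+\mathbb E_U\big[\mathbb E[T_r^1-T_r^0\mid U]^2\big]$, the second term is $O(r^{-\beta})$ for all $\beta$: indeed Lemma~\ref{lemma_poisson_k_0} with $n=1$ gives $\mathbb E[T_r^1-T_r^0\mid U]=\sum_{x\in\mathbb Z^d}\overline{\mathcal F[g_U]}(x)\varphi(x)$, the $x=0$ term vanishes because $\mathcal F[g_U](0)=0$, and the remainder is $O(r^{-\beta})$ uniformly in $U$ by Lemma~\ref{lemma_tail_0} (using $|e^{-2\pi\bm i\,k\cdot U}-1|\le 2$).

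For the conditional variance I would use the independence of the $\xi_x$ to write $\operatorname{Var}[T_r^1-T_r^0\mid U]=\sum_x\operatorname{Var}_\xi[g_U(x+\xi)]=\sum_x\mathbb E[(g_U^2)(x+\xi)]-\sum_x\mathbb E[g_U(x+\xi)]^2$, and apply Lemma~\ref{lemma_poisson_k_0} to each sum: $n=1$ with the Schwartz function $g_U^2$ for the first, and $n=2$ with $f_1=f_2=g_U$ for the second. Extracting the $x=0$ terms and using Plancherel, namely $\mathcal F[g_U^2](0)=\int|\mathcal F[g_U]|^2$ and $(\overline{\mathcal F[g_U]}\varphi)^{\ast2}(0)=\int|\mathcal F[g_U]|^2|\varphi|^2$, while bounding the $x\neq0$ remainders by $O(r^{-\beta})$ uniformly in $U$ (expand $g_U^2$ and $\overline{\mathcal F[g_U]}\varphi$ into $r$-rescalings and translates of fixed Schwartz functions and invoke Lemma~\ref{lemma_tail}, keeping constants $U$-free via $|e^{-2\pi\bm i k\cdot U}-1|\le 2$), yields
$$\operatorname{Var}[T_r^1-T_r^0\mid U]=\int_{\mathbb R^d}|\mathcal F[g_U](k)|^2\big(1-|\varphi(k)|^2\big)\,dk+O(r^{-\beta}).$$
Plugging in $\mathcal F[g_U]$, substituting $k\mapsto k/r$, and using $|e^{-2\pi\bm i (k/r)\cdot U}-1|^2=4\sin^2(\pi(k/r)\cdot U)\le \pi^2 d\,|k|^2/r^2$ (valid since $|U|\le\sqrt d/2$) gives, uniformly in $U$,
$$\operatorname{Var}[T_r^1-T_r^0\mid U]\le \pi^2 d\,r^{d-2}\int_{\mathbb R^d}|k|^2|\mathcal F[f](k)|^2\big(1-|\varphi(k/r)|^2\big)\,dk+O(r^{-\beta}).$$

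Since $|k|^2|\mathcal F[f](k)|^2\in L^1(\mathbb R^d)$ (because $f\in\mathcal S$) and $1-|\varphi(k/r)|^2\le1$ tends to $0$ pointwise, dominated convergence shows the integral is $o(1)$, hence $\mathbb E[(T_r^1-T_r^0)^2]=r^{d-2}\,o(1)$. For the lower bound, Corollary~\ref{cor_expec_var} gives $\operatorname{Var}[T_r^0]=r^d\int|\mathcal F[f](k)|^2(1-|\varphi(k/r)|^2)\,dk+O(r^{-\beta})$; writing $1-|\varphi(k/r)|^2=2\mathbb E[\sin^2((\xi-\xi')\cdot k/(2r))]$ as in \eqref{eq_phi_couplage} and applying Fatou's lemma (in $k$ and in the expectation, exactly as in the proof of Lemma~\ref{lemma_var_d3}) yields
$$\liminf_{r\to\infty}\frac{\operatorname{Var}[T_r^0]}{r^{d-2}}\ \ge\ \frac12\int_{\mathbb R^d}|\mathcal F[f](k)|^2\,\mathbb E\big[|(\xi-\xi')\cdot k|^2\big]\,dk\ \in\ (0,+\infty],$$
where positivity follows because $\mathcal F[f]$ is continuous and not identically zero while $\{k:\mathbb E[|(\xi-\xi')\cdot k|^2]=0\}$ is a proper linear subspace (Lebesgue-null) as soon as the perturbations are not a.s.\ constant. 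Thus $\operatorname{Var}[T_r^0]\ge c\,r^{d-2}$ for large $r$, and combining with the previous display gives $\mathbb E[(T_r^0-T_r^1)^2]=o(\operatorname{Var}[T_r^0])$.

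The main obstacle I expect is the bookkeeping in the second step: establishing the exact Fourier formula for $\operatorname{Var}[T_r^1-T_r^0\mid U]$ for the $r$-dependent function $g_U$, and in particular checking that all the $x\neq0$ Poisson remainders are $O(r^{-\beta})$ uniformly in $U$ — everything else (the $o(r^{d-2})$ upper bound from dominated convergence, and the $\gtrsim r^{d-2}$ lower bound via Fatou) is routine. Note that the degenerate case where the perturbations are a.s.\ constant never arises when this lemma is used in Theorem~\ref{thm_cv_stat}, so one may assume non-degeneracy and $f\not\equiv0$ throughout.
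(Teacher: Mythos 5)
Your proposal is correct, and its first half is exactly the paper's argument: condition on $U$, split $\mathbb{E}[(T_r^0-T_r^1)^2]$ into the conditional-mean and conditional-variance contributions, compute $\operatorname{Var}[T_r^0-T_r^1\mid U]$ in the Fourier domain for $g_U$ with $\mathcal{F}[g_U](k)=(e^{-2\pi\bm{i}\,k\cdot U}-1)\,r^d\mathcal{F}[f](rk)$, bound $|e^{-2\pi\bm{i}\,k\cdot U/r}-1|^2\lesssim |k|^2/r^2$, and conclude $\mathbb{E}[(T_r^0-T_r^1)^2]=r^{d-2}\,o(1)$ by dominated convergence; the uniformity in $U$ of the Poisson remainders that you flag as the main bookkeeping obstacle is handled no more explicitly in the paper, and your observation that $|e^{-2\pi\bm{i}\,k\cdot U}-1|\le 2$ keeps all constants of Lemma~\ref{lemma_tail} $U$-free is the right justification. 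Where you genuinely diverge is the comparison with $\operatorname{Var}[T_r^0]$: you prove directly, via Fatou applied to $r^2(1-|\varphi(k/r)|^2)$ written as in~\eqref{eq_phi_couplage}, that $\liminf_{r}\operatorname{Var}[T_r^0]\,r^{2-d}>0$, hence $\operatorname{Var}[T_r^0]\ge c\,r^{d-2}$ for large $r$, which makes the conclusion immediate. The paper instead invokes its Lemma~\ref{lemma_var_d3} (a lower bound $C r^{d-2}h(r)$ for an arbitrary prescribed $h\to 0$) with the self-referential choice $h(r)=\bigl(\int|\mathcal{F}[f](k)|^2|k|^2(1-|\varphi(k/r)|^2)dk\bigr)^{1/2}$, and then needs a second application of that lemma (to a derivative of $f$) to get $h(r)\gtrsim r^{-3/2}$ so that the $O(r^{-\beta})$ term can be absorbed with $\beta=3$. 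Your route buys a cleaner and slightly stronger statement (a constant-order lower bound, which is logically equivalent to the paper's lemma once one quantifies over all $h$, but obtained directly rather than by contradiction and double application), at the price of re-running the Fatou/non-degeneracy argument inside this proof; the positivity discussion (properness of the subspace where $\mathbb{E}[|(\xi-\xi')\cdot k|^2]=0$, continuity of $\mathcal{F}[f]$) is sound, including the case where the second moment is infinite. Your caveat is also apt: the lower bound, like the paper's own proof, requires $f\not\equiv 0$ and non-degenerate perturbations, hypotheses omitted from the lemma's statement but satisfied wherever it is applied.
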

	\begin{proof}
		Proposition~\ref{cor_expec_var} and Lemma~\ref{lemma_exp_exp_stat} give, for all $\beta > 0$
		\begin{align*}
			\mathbb{E}[(T_r^0 - T_r^{1})^2] &= \left(\mathbb{E}[T_r^0 - T_r^{1}]\right)^2 + \operatorname{Var}[T_r^0 - T_r^{1}] =  O(r^{-\beta}) + \operatorname{Var}[T_r^0 - T_r^{1}].
		\end{align*}
		To bound the variance term, we use the law of the total variance. Using equation~\eqref{eq_kappa_int} with $m =1$ and equation~\eqref{eq_exp_T1} we~get
		\begin{align*}
			\mathbb{E}[T_r^0 - T_r^{1}|U] &= r^d \sum_{x \in \mathbb{Z}^d} \overline{\mathcal{F}[f]}(rx)\varphi(x) - r^d \sum_{x \in \mathbb{Z}^d} e^{2\bm{i}\pi U.x} \overline{\mathcal{F}[f]}(rx)\varphi(x) \\& = r^d \sum_{x \in \mathbb{Z}^d \setminus\{0\}} (1-e^{-2\bm{i}\pi U.x}) \overline{\mathcal{F}[f](rx)}\varphi(x).
		\end{align*}
		According to Lemma~\ref{lemma_tail_0}, $\mathbb{E}[T_r^0 - T_r^{1}|U] \leq C' r^{-\beta}$ where $C'<\infty$ is non random, since $|1-e^{-2\bm{i}\pi U.x}| \leq 2$. Hence $\operatorname{Var}[\mathbb{E}[T_r^0 - T_r^{1}|U]] \leq (C')^2 r^{-2\beta}$. Concerning $\mathbb{E}[\operatorname{Var}[T_r^0 - T_r^{1}|U]]$, we use equation~\eqref{eq_var} with $g_r := f(\cdot + U/r) - f(y)$ to obtain:
		\begin{align*}
			\operatorname{Var}[T_r^0 - T_r^{1}|U] &= r^d \int_{\mathbb{R}^d} |\mathcal{F}[g_r](k)|^2 (1 - |\varphi(k/r)|^2) dk + O(r^{-\beta}) \\&= r^d \int_{\mathbb{R}^d} |e^{-2\bm{i} \pi U.k/r} -1|^2|\mathcal{F}[f](k)|^2 (1 - |\varphi(k/r)|^2) dk+O(r^{-\beta}),
		\end{align*}
		where the term $O(r^{-\beta})$ is bounded by a non random constant. Then, using the fact that $|e^{-2\bm{i} \pi U.k/r} -1|^2 = 4 \sin^2(\pi k.U/r)$ and the Cauchy-Schwarz inequality, we get
		\begin{equation*}\label{eq_diff3}
			\mathbb{E}[(T_r^0 - T_r^{1})^2] \leq \pi^2 r^{d-2} \int_{\mathbb{R}^d}\mathcal{F}[f](k)|^2 |k|^2 (1 - |\varphi(k/r)|^2) dk + O(r^{-\beta}).
		\end{equation*}
		We define
		$$h(r) := \left(\int_{\mathbb{R}^d}|\mathcal{F}[f](k)|^2 |k|^2 (1 - |\varphi(k/r)|^2) dk\right)^{1/2}.$$ 
		The Lebesgue's dominated convergence theorem ensures that $\lim_{r \to \infty} h(r) = 0$. So we can apply Corollary~\ref{lemma_var_d3} with this function $h$, yielding, for some $C > 0$,
		\begin{equation}\label{eq_o_diff}
			\frac{\mathbb{E}[(T_r^0 - T_r^{1})^2]}{\operatorname{Var}[T_r^0]} \leq C \left(h(r) + C_{\beta} \frac{r^{2-\beta-d}}{h(r)}\right).
		\end{equation} 
		Finally, we remark that,
		$h(r)^2 = r^{-d} \operatorname{Var}\left[\sum_{x \in \mathbb{Z}^d} g\left((x + \xi_x)/r\right)\right],$
		with the Schwartz function $g = (4\pi)^{-2} \sum_{i = 1}^d \partial_{x_i} f$. Consequently, we can apply Corollary~\ref{lemma_var_d3} with the function $r \mapsto r^{-1}$ to get: $h(r)^2 \geq C r^{-d} r^{d-2} r^{-1} = r^{-3}$, for some constant $C > 0$. Hence, by choosing $\beta= 3$, inequality~\eqref{eq_o_diff} proves~\eqref{eq_diff1}.
	\end{proof}
	
	We are now in position to prove Theorem~\ref{thm_cv_stat}.
	\begin{proof}[Proof of Theorem~\ref{thm_cv_stat}]
		We first handle the case where the normalization is the standard deviation. We assume that $X_r := \operatorname{Var}[T_r^0]^{-1/2}(T_r^0 - r^d \int_{\mathbb{R}^d} f(x) dx)$ converges in distribution to some random variable $X$. We decompose:
		$$\operatorname{Var}[T_r^0]^{-1/2}(T_r^1 - r^d \int_{\mathbb{R}^d} f(x) dx) = \operatorname{Var}[T_r^0]^{-1/2}(T_r^1 - T_r^0) + X_r.$$
		Lemma~\ref{lemma_diff_non_stat_stat} ensures that $\operatorname{Var}[T_r^0]^{-1/2}(T_r^1 - T_r^0)$ converges to $0$ in $L^2(\mathbb{P})$ and thus in probability. Consequently, according to the Slutsky's lemma, $\operatorname{Var}[T_r^0]^{-1/2}(T_r^1 - r^d \int_{\mathbb{R}^d} f(x) dx)$ converges to $X$ in distribution. 
		
		We assume now that $d = 1$ and $1 - \varphi(x) \sim c |x|^{\alpha}$ for $\alpha \in(1, 2]$ and $c > 0$. According to Theorem~\ref{thm_cv_alpha_stable}, $Y_r := r^{(\alpha -1)/\alpha}(T_r^0 - r \int_{\mathbb{R}} f(x) dx)$ converges in distribution to some random variable $Y$. Then, according to Lemma~\ref{lemma_diff_non_stat_stat} and Proposition~~\ref{prop_var}, 
		$$\mathbb{E}\left[r^{2(\alpha-1)/\alpha}(T_r^1 - T_r^0)^2\right]
		\leq r^{2(\alpha-1)/\alpha} \times o(r^{1-\alpha}) = r^{(\alpha-1)(2/\alpha -1)}\times o(1).$$ 
		Since $\alpha \leq 2$, the previous bound and the Slutsky's lemma imply $r^{\frac{\alpha - 1}{\alpha}}(T_r^1 - r \int_{\mathbb{R}} f(x) dx) \to Y$ in distribution, which concludes the proof.
	\end{proof}
	
	\medskip \begin{remark}\label{rmk_non_centered_rv}
		Note that in Lemma~\ref{lemma_diff_non_stat_stat} and Theorem~\ref{thm_cv_stat}, we only used the fact that $U$ is square integrable. Hence, the previous results can be applied to a constant random variable. Accordingly, Theorems~\ref{thm_tcl_var_unbounded_2} and~\ref{thm_cv_alpha_stable} remain valid if the perturbations $(\xi_x)_{x \in \mathbb{Z}^d}$ admit a non zero mean $\mu$. Indeed, it suffices to consider the linear statistic corresponding to $(\xi_x - \mu)_{x \in \mathbb{Z}^d}$ for which both Theorems~\ref{thm_tcl_var_unbounded_2} and~\ref{thm_cv_alpha_stable} apply, and then use Theorem~\ref{thm_cv_stat} with $U \equiv \mu$.
	\end{remark}
	
	\section{Results related to the Poisson summation formula}\label{sec_technical}
	
	In this section, we prove Lemma~\ref{lemma_poisson_k} which is the main tool for computing the cumulants of $T_r^0$, Lemma~\ref{lemma_moment} that ensures that $T_r^0$ possesses moments of all orders and Lemma~\ref{lemma_tail}, that yields asymptotic simplifications in the results of Lemma~\ref{lemma_poisson_k}. Before proving these three results, we need the next preliminary lemma. 
	\medskip \begin{lemma}\label{lemma_fourier_convol}
		Let $n \geq 1$ and $(f_i)_{i = 1 \dots, n}$ be a family of infinitely differentiable functions, with derivatives of all orders that are bounded and integrable. Moreover we assume that $\forall x \in \mathbb{R}^d, |f_i(x)| \leq C_i(1+|x|^{2d})^{-1}$, for all $i = 1, \dots, n$. Let $\xi$ be a random variable with characteristic function $\varphi$. Let $s: x \in \mathbb{R}^d \mapsto \circledast_{i = 1}^n \{\overline{\mathcal{F}[f_i]} \varphi\}(x)$. Then, $s \in L^1(\mathbb{R}^d)$ and:
		$$\forall k \in \mathbb{R}^d,~ \mathcal{F}[s](x) = \prod_{i = 1}^n \mathbb{E}[f_i(x+ \xi)].$$
		Moreover, for all $k \in \mathbb{N}$, $\sup_{x \in \mathbb{R}^d}||y|^k s(y)| < \infty$.  
	\end{lemma}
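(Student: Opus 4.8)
\emph{Proof plan.} The statement is a chain of standard Fourier-analytic facts, and I would organise it in three steps. First I would record integrability. Since $f_i$ is smooth with every derivative integrable, the identity $\mathcal{F}[\partial^\beta f_i](k)=(-2\pi\bm{i})^{|\beta|}k^\beta\mathcal{F}[f_i](k)$ together with the crude bound $|\mathcal{F}[\partial^\beta f_i](k)|\le\|\partial^\beta f_i\|_1$ shows that $\mathcal{F}[f_i]$ decays faster than any polynomial, so $\overline{\mathcal{F}[f_i]}$ is continuous, bounded and integrable. As $|\varphi|\le1$, each $\overline{\mathcal{F}[f_i]}\varphi$ lies in $L^1(\mathbb{R}^d)\cap L^\infty(\mathbb{R}^d)$, and Young's inequality gives $s=\circledast_{i=1}^n\{\overline{\mathcal{F}[f_i]}\varphi\}\in L^1(\mathbb{R}^d)$; being a convolution of functions in $L^1(\mathbb{R}^d)\cap L^\infty(\mathbb{R}^d)$, $s$ is moreover bounded and continuous.

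Next I would compute $\mathcal{F}[s]$. Writing $\varphi(k)=\mathbb{E}[e^{2\pi\bm{i}k\cdot\xi}]$ and using Fubini's theorem (legitimate since $\int_{\mathbb{R}^d}|\mathcal{F}[f_i](k)|\,dk<\infty$) together with the Fourier inversion formula for $f_i$ (valid because $f_i$ and $\mathcal{F}[f_i]$ are integrable, and the $f_i$ are real-valued, so that $\int_{\mathbb{R}^d}\overline{\mathcal{F}[f_i](k)}\,e^{2\pi\bm{i}k\cdot z}\,dk=f_i(z)$), one gets
\[
\mathcal{F}[\overline{\mathcal{F}[f_i]}\varphi](x)=\mathbb{E}\!\left[\int_{\mathbb{R}^d}\overline{\mathcal{F}[f_i](k)}\,e^{2\pi\bm{i}k\cdot(x+\xi)}\,dk\right]=\mathbb{E}[f_i(x+\xi)].
\]
Applying the convolution theorem $\mathcal{F}[g\ast h]=\mathcal{F}[g]\,\mathcal{F}[h]$ (for $g,h\in L^1(\mathbb{R}^d)$) to the $n$ factors of $s$ then yields $\mathcal{F}[s](x)=\prod_{i=1}^n\mathbb{E}[f_i(x+\xi)]$, which is the claimed identity.

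Finally, for the rapid decay I would set $g:=\mathcal{F}[s]=\prod_{i=1}^n g_i$ with $g_i:=\mathbb{E}[f_i(\cdot+\xi)]$. By Tonelli's theorem $\|g_i\|_1\le\|f_i\|_1$ and $\|g_i\|_\infty\le\|f_i\|_\infty$, and differentiating under the expectation (dominated uniformly by $\|\partial^\beta f_i\|_\infty$) gives $\partial^\beta g_i=\mathbb{E}[\partial^\beta f_i(\cdot+\xi)]\in L^1(\mathbb{R}^d)\cap L^\infty(\mathbb{R}^d)$ for every multi-index $\beta$ — this is exactly where the hypothesis that \emph{all} derivatives of the $f_i$ are simultaneously bounded and integrable is used. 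Since $L^1(\mathbb{R}^d)\cap L^\infty(\mathbb{R}^d)$ is stable under products, the Leibniz rule gives $\partial^\beta g\in L^1(\mathbb{R}^d)$ for every $\beta$, in particular $g\in L^1(\mathbb{R}^d)$. By Fourier inversion (both $s$ and $g$ are in $L^1$, and both sides are continuous) $s=\mathcal{F}^{-1}[g]$, and the standard weight-versus-derivative identity $\mathcal{F}^{-1}[\partial^\beta g](y)=(2\pi\bm{i})^{|\beta|}y^\beta s(y)$ yields $\sup_{y\in\mathbb{R}^d}|y^\beta s(y)|\le(2\pi)^{-|\beta|}\|\partial^\beta g\|_1<\infty$. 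Expanding $|y|^{2k}$ as a finite linear combination of monomials $y^{2\gamma}$ with $|\gamma|=k$, and using $|y|^k\le1+|y|^{2k}$, we conclude $\sup_{y\in\mathbb{R}^d}\big||y|^k s(y)\big|<\infty$ for all $k\in\mathbb{N}$.

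I expect the only genuinely non-mechanical point to be this last step: one must verify that $g$ \emph{together with all of its partial derivatives} is integrable — which is precisely what consumes the strong hypotheses on the $f_i$ — and then transfer polynomial decay of $s$ from $L^1$-control of the derivatives of $g$ through Fourier inversion, working throughout with merely $L^1$ (rather than Schwartz) functions, so that each inversion/convolution identity has to be justified at that level of regularity.
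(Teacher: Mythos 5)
Your plan is correct and follows essentially the same route as the paper: Young's inequality for $s\in L^1(\mathbb{R}^d)$, a Fubini argument identifying the Fourier transform of each factor $\overline{\mathcal{F}[f_i]}\varphi$ with $\mathbb{E}[f_i(\cdot+\xi)]$ so that $\mathcal{F}[s]=\prod_i\mathbb{E}[f_i(\cdot+\xi)]$, and then Fourier inversion plus integration by parts (control of $\|\partial^\beta \mathcal{F}[s]\|_1$ via Leibniz and the bounded/integrable derivatives of the $f_i$) to convert derivative bounds into polynomial decay of $s$. The only cosmetic difference is that you invoke the $L^1$ convolution theorem factor by factor where the paper writes each factor as $\mathcal{F}[\mathbb{E}[f_i(\xi-\cdot)]]$ and applies inversion to the convolution of these transforms; the substance is identical.
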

	\begin{proof}
		According to Young's inequality for the convolution and using the fact that the functions $(f_i)_{1 \leq i \leq n }$ possess integrable derivatives of all orders, we get
		$$\|s\|_1 \leq \prod_{i = 1}^n \|\mathcal{F}[f_i] \varphi\|_1 \leq \prod_{i = 1}^n \|\mathcal{F}[f_i]\|_1 < \infty.$$
		Consequently, $s \in L^1(\mathbb{R}^d)$ and $ \mathcal{F}[s]$ is defined everywhere. Moreover, using the Fubini's theorem, we obtain:
		\begin{align*}
		\overline{\mathcal{F}[f_i]}(y) \varphi(y) =  \overline{\mathcal{F}[f_i]}(y) \mathbb{E}[e^{2\bm{i} \pi \xi \cdot y}] =\mathbb{E}[\mathcal{F}[f_i(\xi - \cdot)]](y)  = \mathcal{F}[\mathbb{E}[f_i(\xi - \cdot)]](y).
		\end{align*}
		By the Lebesgue's dominated convergence theorem,  $\mathbb{E}\left[f_i(\xi - \cdot)\right]$ is infinitely differentiable with integrable derivatives, whence $\mathcal{F}\left[\mathbb{E}\left[f_i(\xi - \cdot)\right]\right] \in L^1(\mathbb{R}^d)$. Hence, we can apply the Fourier inversion theorem, see, e.g.,~\cite{folland2009fourier} and we obtain, for all $k \in \mathbb{R}^d$, 
		$$\mathcal{F}[s](k) = \mathcal{F}\left[ \circledast_{i = 1}^n  \mathcal{F}[\mathbb{E}[f_i(\xi - \cdot)]]\right] = \prod_{i =1}^n \mathcal{F}\left[ \mathcal{F}\left[\mathbb{E}\left[f_i(\xi - \cdot)\right]\right]\right(k) = \prod_{i = 1}^n \mathbb{E}[f_i(k+ \xi)].$$
		As $\|\mathcal{F}[s]\|_1 \leq \prod_{i = 2}^n \|f_i\|_{\infty} \mathbb{E}\left[\|f_1(\cdot + \xi)\|_1\right] = \|f_1\|_1 \prod_{i = 2}^n \|f_i\|_{\infty} < \infty$ (with the product equal to $1$ if $n = 1$) and $s \in L^1(\mathbb{R}^d)$ the Fourier inversion theorem gives:
		$$\forall y \in \mathbb{R}^d \setminus\{0\}, ~s(y) = \int_{\mathbb{R}^d} \prod_{i = 1}^n \mathbb{E}[f_i(x+ \xi)] e^{-2\bm{i} \pi x \cdot y} dx.$$
		Using the assumptions on $f_1, \dots, f_n$ and the Lebesgue's dominated convergence theorem, we obtain by integrations by parts, that for all $k \in \mathbb{N}$, $j  \in \{1, \dots,d\}$ and $y \in \mathbb{R}^d \setminus\{0\}$:
		\begin{align*}
			|y_j^k s(y)| &= \left|(2\bm{i} \pi)^{-k} \int_{\mathbb{R}^d}  \partial_{x_j}^k\left[ \prod_{i = 1}^n \mathbb{E}[f_i(x+ \xi)]\right] e^{-2\bm{i} \pi x \cdot y} dx\right| \\& \leq (2 \pi)^{-k} \sum_{|B_1| + \dots + |B_n| = k} \frac{n!}{|B_1|! \dots |B_n|!} \int_{\mathbb{R}^d}\prod_{i = 1}^n \mathbb{E}\left[\left| \partial_{x_j}^k f_i(x+ \xi)\right|\right] dx \\& \leq (2 \pi)^{-k} \sum_{|B_1| + \dots + |B_n| = k} \frac{n!}{|B_1|! \dots |B_n|!} \| \partial_{x_j}^k f_1\|_1 \prod_{i = 2}^n \| \partial_{x_j}^k f_i\|_{\infty}.
		\end{align*}
		We conclude using $|y|^k \leq d^k\sum_{j = 1}^d |y_j|^{k}$ and summing the previous inequalities.
	\end{proof}
	
	Lemma~\ref{lemma_poisson_k} proves that the Poisson summation formula can be applied in the context of perturbed lattices with no assumptions on the random variable $\xi$, as soon as the functions $f_1, \dots, f_n$ are smooth and enough localized. It uses the following version of the Poisson formula as stated in Theorem 3.2.8 of~\cite{grafakos2008classical}.
	
	\medskip \begin{theorem}\label{thm_poisson}
		Let $g : \mathbb{R}^d \to \mathbb{R}$ and assume that:
		\begin{enumerate}
			\item g is continuous,
			\item there exist $C < \infty$ and $\delta > 0$ such that $|g(x)| \leq C(1+|x|)^{-d - \delta}$,
			\item $\sum_{x \in \mathbb{Z}^d} |\mathcal{F}[g](x)| < \infty$.
		\end{enumerate}
		Then, $$\sum_{x \in \mathbb{Z}^d} g(x) = \sum_{x \in \mathbb{Z}^d} \mathcal{F}[g](x).$$
	\end{theorem}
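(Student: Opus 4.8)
The plan is to run the textbook periodization argument. The starting object is the $\mathbb{Z}^d$-periodization of $g$,
\[
G(x) := \sum_{n \in \mathbb{Z}^d} g(x+n).
\]
First I would check that $G$ is well defined and continuous. For $x$ in a fixed compact set and $|n|$ large, hypothesis (2) gives $|g(x+n)| \leq C'(1+|n|)^{-d-\delta}$, and $\sum_{n \in \mathbb{Z}^d}(1+|n|)^{-d-\delta} < \infty$ because this sum is comparable to $\int_0^\infty (1+r)^{-d-\delta} r^{d-1}\,dr < \infty$ (as $-d-\delta+(d-1) = -1-\delta < -1$). Hence the series defining $G$ converges absolutely and locally uniformly, so, combined with the continuity of $g$ from hypothesis (1), $G$ is a continuous $\mathbb{Z}^d$-periodic function on $\mathbb{R}^d$.

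Second, I would identify the Fourier coefficients of $G$ on the torus $[-1/2,1/2]^d$. Local uniform convergence licenses interchanging the sum with the integral, so for $k \in \mathbb{Z}^d$,
\[
\int_{[-1/2,1/2]^d} G(x)\, e^{2\pi \bm{i}\, k\cdot x}\,dx = \sum_{n \in \mathbb{Z}^d} \int_{n+[-1/2,1/2]^d} g(y)\, e^{2\pi\bm{i}\,k\cdot y}\,dy = \int_{\mathbb{R}^d} g(y)\, e^{2\pi\bm{i}\,k\cdot y}\,dy = \mathcal{F}[g](k),
\]
where I used $e^{-2\pi\bm{i}\,k\cdot n}=1$ for $k,n \in \mathbb{Z}^d$, the fact that the cubes $n+[-1/2,1/2]^d$ tile $\mathbb{R}^d$ up to a null set, and $g \in L^1(\mathbb{R}^d)$ (a consequence of hypothesis (2)). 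Thus the Fourier coefficients of $G$ are exactly $(\mathcal{F}[g](k))_{k \in \mathbb{Z}^d}$.

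Third, I would invoke hypothesis (3). Since $\sum_{k\in\mathbb{Z}^d}|\mathcal{F}[g](k)| < \infty$, the trigonometric series $\sum_{k\in\mathbb{Z}^d}\mathcal{F}[g](k)\,e^{-2\pi\bm{i}\,k\cdot x}$ converges absolutely and uniformly to a continuous $\mathbb{Z}^d$-periodic function $H$, whose Fourier coefficients coincide with those of $G$. By uniqueness of Fourier coefficients among continuous periodic functions (which follows from Fej\'er's theorem, or from completeness of the trigonometric system in $L^2$ of the torus), $H = G$ everywhere. Evaluating $G(x) = \sum_{k\in\mathbb{Z}^d}\mathcal{F}[g](k)\,e^{-2\pi\bm{i}\,k\cdot x}$ at $x=0$ gives $\sum_{n\in\mathbb{Z}^d} g(n) = \sum_{k\in\mathbb{Z}^d}\mathcal{F}[g](k)$, which is the claim.

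The only step that is not pure absolute-convergence bookkeeping is the third one, namely the pointwise identification of the continuous periodic function $G$ with the sum of its (absolutely convergent) Fourier series; this is the heart of the Poisson summation formula. Since the statement is exactly Theorem 3.2.8 of~\cite{grafakos2008classical}, in the paper I would not reproduce the argument but simply cite it; the sketch above records what lies behind that reference.
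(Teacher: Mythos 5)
Your periodization argument is correct, and it is precisely the standard proof behind the result the paper invokes: the paper itself gives no proof of this statement, citing Theorem 3.2.8 of Grafakos, whose proof proceeds exactly as you sketch (periodize $g$, identify the Fourier coefficients of the periodization with $\mathcal{F}[g](k)$, and use absolute summability plus uniqueness of Fourier coefficients for continuous periodic functions to sum the series at $x=0$). All the convergence and interchange steps you record are justified under hypotheses (1)--(3), so there is nothing to add.
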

	Note that, to prove the next lemma, one cannot directly apply the previous theorem to $g(x) := \prod_{i = 1}^n \mathbb{E}[f_i(x+\xi)]$, as $g$ does not necessarily satisfies its assumption 2 due to possible slow decay of the tails of the perturbations. But, one can prove that $\mathcal{F}[g]$ satisfies the conditions of this aforementioned Poisson summation formula.
	
	\medskip \begin{lemma}\label{lemma_poisson_k}
		Let $n \geq 1$ and $(f_i)_{i = 1 \dots, n}$ be a family a infinitely differentiable functions with derivatives of all orders that are bounded and integrable. Moreover, we assume that $\forall x \in \mathbb{R}^d, |f_i(x)| \leq C_i(1+|x|^{2d})^{-1}$, for all $i = 1, \dots, n$ and where $C_i < \infty$. Let $\xi$ be a random variable with characteristic function $\varphi$. Then, the following equality holds, where both terms are finite:
		$$\sum_{x \in \mathbb{Z}^d} \prod_{i = 1}^n \mathbb{E}[f_i(x+\xi)] =  \sum_{x \in \mathbb{Z}^d} \circledast_{i = 1}^n \{\overline{\mathcal{F}[f_i]} \varphi\}(x).$$
	\end{lemma}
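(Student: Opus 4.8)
The plan is to deduce the identity from the Poisson summation formula of Theorem~\ref{thm_poisson}, but applied to the function
$$s(x) := \circledast_{i=1}^n\{\overline{\mathcal{F}[f_i]}\varphi\}(x)$$
appearing on the right-hand side, rather than directly to $g(x) := \prod_{i=1}^n\mathbb{E}[f_i(x+\xi)]$ on the left. The reason — and the one genuine subtlety — is that $g$ need not satisfy the decay hypothesis (2) of Theorem~\ref{thm_poisson}: if $\xi$ is heavy-tailed, $\mathbb{E}[f_i(x+\xi)]$ can decay far slower than $|x|^{-d-\delta}$. The function $s$, on the other hand, does have rapid decay, so Poisson applies to it; this is exactly what Lemma~\ref{lemma_fourier_convol} is designed to supply.

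Step one: invoke Lemma~\ref{lemma_fourier_convol}, whose hypotheses coincide with those assumed here. It yields $s \in L^1(\mathbb{R}^d)$, the identity $\mathcal{F}[s](x) = \prod_{i=1}^n\mathbb{E}[f_i(x+\xi)] = g(x)$, and $\sup_{y}||y|^k s(y)| < \infty$ for every $k\in\mathbb{N}$. Taking $k=0$ and $k=d+1$ and combining gives $|s(x)| \le C(1+|x|)^{-d-1}$, so hypothesis (2) of Theorem~\ref{thm_poisson} holds with $\delta=1$ and, a fortiori, $\sum_{x\in\mathbb{Z}^d}|s(x)| < \infty$, which is the finiteness of the right-hand side. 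For hypothesis (1), $s$ is continuous: each $\overline{\mathcal{F}[f_i]}\varphi$ lies in $L^1(\mathbb{R}^d)\cap C(\mathbb{R}^d)$ (since $\mathcal{F}[f_i]$ is continuous with rapid decay and $\varphi$ is a continuous characteristic function), and a convolution of an $L^1$ function with a bounded continuous function is continuous; for $n=1$, $s$ is directly the continuous product $\overline{\mathcal{F}[f_1]}\varphi$. Since Theorem~\ref{thm_poisson} is stated for real-valued functions, one applies it to $\Re s$ and $\Im s$ separately.

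Step two: verify hypothesis (3), i.e. $\sum_{x\in\mathbb{Z}^d}|\mathcal{F}[s](x)| = \sum_{x\in\mathbb{Z}^d}\prod_{i=1}^n|\mathbb{E}[f_i(x+\xi)]| < \infty$. Using $|\mathbb{E}[f_i(x+\xi)]| \le \|f_i\|_\infty < \infty$ for $i\ge 2$, it suffices to control $\sum_{x\in\mathbb{Z}^d}|\mathbb{E}[f_1(x+\xi)]|$. By Tonelli's theorem and $|f_1(y)| \le C_1(1+|y|^{2d})^{-1}$,
$$\sum_{x\in\mathbb{Z}^d}|\mathbb{E}[f_1(x+\xi)]| \le \mathbb{E}\Big[\sum_{x\in\mathbb{Z}^d}|f_1(x+\xi)|\Big] \le C_1\sup_{t\in\mathbb{R}^d}\sum_{x\in\mathbb{Z}^d}\frac{1}{1+|x+t|^{2d}},$$
and this last supremum is finite: by $\mathbb{Z}^d$-periodicity one may take $t$ in a fundamental domain, after which the sum is dominated — up to finitely many terms bounded by $1$ — by $\sum_{x\in\mathbb{Z}^d\setminus\{0\}}|x|^{-2d}$, which converges because $2d>d$. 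In particular the left-hand side of the claimed identity is finite.

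Step three: conclude. All three hypotheses of Theorem~\ref{thm_poisson} hold for $s$ (treating real and imaginary parts), hence $\sum_{x\in\mathbb{Z}^d}s(x) = \sum_{x\in\mathbb{Z}^d}\mathcal{F}[s](x) = \sum_{x\in\mathbb{Z}^d}\prod_{i=1}^n\mathbb{E}[f_i(x+\xi)]$, which is the asserted equality, and both sides are finite by the above. I expect no real obstacle beyond the conceptual point already highlighted — passing to the Fourier side to restore polynomial decay — together with the bookkeeping in Step two; the heavy lifting (the decay estimates for $s$ and the identity $\mathcal{F}[s]=g$) has been isolated in Lemma~\ref{lemma_fourier_convol}.
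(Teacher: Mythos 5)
Your proposal is correct and follows essentially the same route as the paper: apply Theorem~\ref{thm_poisson} to $s=\circledast_{i=1}^n\{\overline{\mathcal{F}[f_i]}\varphi\}$ rather than to $\prod_i\mathbb{E}[f_i(\cdot+\xi)]$, using Lemma~\ref{lemma_fourier_convol} for the decay of $s$ and the identity $\mathcal{F}[s]=\prod_i\mathbb{E}[f_i(\cdot+\xi)]$, and checking hypothesis (3) via Fubini and the uniform boundedness of $\sum_{x\in\mathbb{Z}^d}(1+|x+t|^{2d})^{-1}$. The only (minor) deviation is that you establish this last boundedness by a direct periodicity/comparison estimate, whereas the paper gets it from a second application of the Poisson summation formula; both are valid.
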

	\begin{proof}
		We apply Theorem~\ref{thm_poisson} to the continuous function $s = \circledast_{i = 1}^n \{\overline{\mathcal{F}[f_i]} \varphi\}$. To check condition 3, we first use Lemma~\ref{lemma_fourier_convol}:
		$$\sum_{x \in \mathbb{Z}^d} |\mathcal{F}[s](x)| = \sum_{x \in \mathbb{Z}^d} \left|\prod_{i = 1}^n \mathbb{E}[f_i(x+\xi)]\right| \leq \prod_{i = 2}^{n} \|f_i\|_{\infty} \sum_{x \in \mathbb{Z}^d}  |\mathbb{E}[f_1(x+\xi)]|.$$
		Using the decay assumption on $f_1$ and the Fubini's theorem, we obtain:
		$$\sum_{x \in \mathbb{Z}^d} |\mathbb{E}[f_1(x+\xi)]| \leq C_1 \mathbb{E}\left[\sum_{x \in \mathbb{Z}^d} \frac1{1+|x + \xi|^{2d}} \right].$$
		Now, we use Theorem~\ref{thm_poisson} with the function $g_y: x \mapsto  (1+|x +y|^{2d})^{-1}$ in order to prove that $F:y \mapsto \sum_{x \in \mathbb{Z}^d} (1+|x + y|^{2d})^{-1}$ is bounded. We check that $\sum_{x \in \mathbb{Z}^d} |\mathcal{F}[g_y](x)| < \infty$ since $g_y$ is infinitely  differentiable with integrable derivatives. Accordingly, point 3 is satisfied. The points 1 and 2 are also trivially satisfied by $g_y$. Consequently
		$$|F(y)| = \left|\sum_{x \in \mathbb{Z}^d} e^{-2i\pi y \cdot x}\mathcal{F}\left[(1+|\cdot|^{2d})^{-1}\right](x) \right| \leq \sum_{x \in \mathbb{Z}^d} \left|\mathcal{F}\left[(1+|\cdot|^{2d})^{-1}\right]\right|(x) < \infty.$$	 
		We have thus obtained, 
		$$\sum_{x \in \mathbb{Z}^d} \left|\prod_{i = 1}^n \mathbb{E}[f_i(x+\xi)]\right| \leq C_1 \|F\|_{\infty}  \prod_{i = 2}^{n} \|f_i\|_{\infty} < \infty,$$
		which shows condition 3 of Theorem \ref{thm_poisson} for the function $s$. According to Lemma~\ref{lemma_fourier_convol}, there exists $C < \infty$ such that: $\forall x \in \mathbb{R}^d, ~|s(x)| \leq C(1+|x|^{2d})^{-1}$, so the point 2 of Theorem~\ref{thm_poisson} is satisfied by $s$ and using the fact that $\mathcal{F}[s]=\prod_{i = 1}^n \mathbb{E}[f_i(\cdot+ \xi)]$ (see Lemma~\ref{lemma_fourier_convol}), we get the result. 
	\end{proof}

	Next lemma ensures that for $i = 0, 1$, $T_r^i$ possesses moments of all orders. As a corollary we obtain that for all $k \in \mathbb{N}$, and Borel set $B$ of $\mathbb{R}^d$ with finite Lebesgue measure then $\mathbb{E}[(\sum_{x \in \Phi^i} \mathbf{1}_{B}(x))^k] < \infty$. Indeed, $\mathbf{1}_{B}$ can be bounded above by a Schwartz function.
	\medskip \begin{lemma}\label{lemma_moment}
		Let $k \in \mathbb{N}$, $f$ be a Schwartz function, $(\xi_x)_{x \in \mathbb{Z}^d}$ be i.i.d. random variables with characteristic function $\varphi$ and $U$ be a uniform random variable over $[-1/2, 1/2]^d$ which is independent of $(\xi_x)_{x \in \mathbb{Z}^d}$. Then, $$\mathbb{E}\left[\left(\sum_{x \in \mathbb{Z}^d}|f|\left(x + \xi_x + U\right) \right)^k\right] + \mathbb{E}\left[\left(\sum_{x \in \mathbb{Z}^d}|f|\left(x + \xi_x\right) \right)^k\right] < \infty.$$
	\end{lemma}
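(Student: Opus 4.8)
The plan is to expand the $k$-th power of the sum into a $k$-fold sum over lattice points and bound each term, reducing everything to a statement about how many lattice translates of a Schwartz function can be large simultaneously. Writing $g = |f|$, which still satisfies a Schwartz-type decay $g(y) \le C_N (1+|y|)^{-N}$ for every $N$, I would first treat the stationary case and observe
\[
\mathbb{E}\left[\left(\sum_{x \in \mathbb{Z}^d} g(x+\xi_x+U)\right)^k\right]
= \sum_{x_1,\dots,x_k \in \mathbb{Z}^d} \mathbb{E}\left[\prod_{j=1}^k g(x_j + \xi_{x_j} + U)\right],
\]
the interchange of sum and expectation being legitimate by Tonelli since all terms are nonnegative. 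Group the indices $x_1,\dots,x_k$ according to which of them coincide: a term is controlled by $\mathbb{E}[\prod_{\ell} g(y_\ell + \xi_{y_\ell} + U)^{m_\ell}]$ over the distinct values $y_1,\dots,y_s$ with multiplicities $m_1 + \cdots + m_s = k$, and by independence of the $(\xi_x)$ this factors (after conditioning on $U$) into $\prod_\ell \mathbb{E}[g(y_\ell+\xi_{y_\ell}+U)^{m_\ell}\mid U]$. Since $g^{m_\ell} \le \|g\|_\infty^{m_\ell-1} g$, each factor is at most $\|g\|_\infty^{k-s}\,\mathbb{E}[g(y_\ell+\xi+U)\mid U]$, so it suffices to bound $\prod_{\ell=1}^s \mathbb{E}[g(y_\ell+\xi+U)\mid U]$ summed over distinct $y_\ell$, for each fixed $s \le k$.

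The key step is then a uniform bound of the form
\[
\sup_{u \in [-1/2,1/2]^d}\ \sum_{y \in \mathbb{Z}^d} \mathbb{E}\big[g(y+\xi+u)\big] \le M < \infty,
\]
which I would get by Tonelli and the fact that, for any fixed real shift $z=\xi+u$, $\sum_{y \in \mathbb{Z}^d} g(y+z) = \sum_{y\in\mathbb{Z}^d} g(y+z)$ is bounded uniformly in $z$: this is the same ``$F$ is bounded'' argument already used in the proof of Lemma~\ref{lemma_poisson_k} (comparing the lattice sum of a Schwartz function to its integral, or applying Poisson summation to $g(\cdot+z)$ whose Fourier transform is $e^{2\pi \bm i z\cdot k}\mathcal{F}[g](k)$ with $\sum_k |\mathcal{F}[g](k)|<\infty$). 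Taking the expectation over $\xi$ preserves the bound since it is uniform in $z$. With this in hand, $\sum_{y_1,\dots,y_s}^{\ne} \prod_\ell \mathbb{E}[g(y_\ell+\xi+u)\mid u] \le \big(\sum_y \mathbb{E}[g(y+\xi+u)\mid u]\big)^s \le M^s$, uniformly in $u$, and summing over the finitely many grouping patterns (there are at most $k^k$ of them, with combinatorial weights bounded by $k!$) gives a finite bound for the conditional $k$-th moment that does not depend on $U$; taking expectation over $U$ finishes the stationary case. The non-stationary case $\sum_x g(x+\xi_x)$ is identical, dropping $U$ throughout (equivalently, setting $u=0$).

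The main obstacle is purely bookkeeping: organizing the expansion of $(\sum_x \cdots)^k$ by coincidence patterns of the indices and checking that the combinatorial multiplicities and the powers $g^{m_\ell}$ are absorbed cleanly into constants depending only on $k$, $f$. There is no analytic difficulty beyond the uniform-in-shift boundedness of $\sum_{y}g(y+z)$, which is standard and already invoked earlier in the paper. I would also remark, as the statement's surrounding text does, that since $\mathbf 1_B \le C g$ for a suitable Schwartz $g$ whenever $B$ has finite Lebesgue measure, the lemma immediately yields finiteness of all factorial-type moments $\mathbb{E}[(\sum_{x\in\Phi^i}\mathbf 1_B(x))^k]$, and in particular justifies the Fubini interchange used at the start of the proof of Proposition~\ref{prop_kappa}.
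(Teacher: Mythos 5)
Your proposal is correct, and its combinatorial skeleton is the same as the paper's: expand the $k$-th power into sums over distinct lattice sites grouped by coincidence pattern, use independence of the $(\xi_x)$ (conditionally on $U$) to factor the expectation, and reduce to a summable single-site bound. The only genuine divergence is in that last estimate. The paper dominates $|f|$ by $g(x)=C(1+|x|^{2d})^{-1}$ and then invokes Lemma~\ref{lemma_poisson_k} to bound $\sum_{x\in\mathbb{Z}^d}\mathbb{E}\left[g^{l_i}(x+\xi)\right]$ by the Fourier-side sum $\sum_{x}|\mathcal{F}[g^{l_i}](x)|\,|\varphi(x)|$, i.e.\ it reuses the Poisson-summation machinery that drives the rest of the paper; the stationary case is then handled by redoing the computation with $\mathbb{E}[\cdot\mid U]$. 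You instead bound $\sum_{y\in\mathbb{Z}^d}\mathbb{E}\left[g(y+\xi+u)\right]$ directly by Tonelli and the elementary fact that $\sum_{y\in\mathbb{Z}^d}g(y+z)$ is bounded uniformly in the shift $z\in\mathbb{R}^d$ (an ingredient the paper itself uses inside the proof of Lemma~\ref{lemma_poisson_k}, where it is the boundedness of the function $F$). This avoids any Fourier transform of the random translate, and the uniformity in $u$ makes the stationary case immediate; the price is a small extra reduction $g^{m_\ell}\le\|g\|_\infty^{m_\ell-1}g$, whereas the paper keeps the powers $g^{l_i}$ and lets the Fourier decay absorb them. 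Both routes are sound and of comparable length; yours is marginally more elementary, the paper's keeps all moment estimates within a single Poisson-summation framework.
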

	\begin{proof}
		To exploit the independence between the random variables $(\xi_x)_{x \in \mathbb{Z}^d}$, we expand the sum to the power $k$ into sums over distinct indexes:
		\begin{align*}
			\mathbb{E}\left[\left(\sum_{x \in \mathbb{Z}^d}|f|\left(x+\xi_x\right) \right)^k\right] = \sum_{n = 1}^k \sum_{l_1 + \dots + l_n = k} \mathbb{E}\left[\sum_{x_1, \dots, x_n \in \mathbb{Z}^d}^{\neq} \prod_{i = 1}^{n}|f|^{l_i}\left(x_i + \xi_{x_i}\right) \right].
		\end{align*}
		Let $\xi$ be a random variable having characteristic function $\varphi$. Since $f$ is Schwartz, there exists $C < \infty$ such that $|f|(x) \leq C(1 + |x|)^{-2d} := g(x)$ and by independence, we have 
		\begin{align*}
			\mathbb{E}\left[\left(\sum_{x \in \mathbb{Z}^d}|f|\left(x+\xi_x\right) \right)^k\right]  &\leq \sum_{n = 1}^k \sum_{l_1 + \dots + l_n = k} \mathbb{E}\left[\sum_{x_1, \dots, x_n \in \mathbb{Z}^d}^{\neq} \prod_{i = 1}^{n}g^{l_i}\left(x_i + \xi_{x_i}\right) \right] \\& = \sum_{n = 1}^k \sum_{l_1 + \dots + l_n = k} \sum_{x_1, \dots, x_n \in \mathbb{Z}^d}^{\neq} \prod_{i = 1}^{n} \mathbb{E}\left[g^{l_i}\left(x_i + \xi\right)\right].
		\end{align*}
		The functions $(g^{l_i})_{i = 1, \dots, k}$ satisfy the assumptions of Lemma~\ref{lemma_poisson_k}. Accordingly,
		\begin{align*}
			\sum_{x \in \mathbb{Z}^d}  \mathbb{E}\left[g^{l_i}\left(x + \xi\right)\right] &\leq \sum_{x \in \mathbb{Z}^d} |\mathcal{F}[g^{l_i}]|(x) |\varphi|(x) \\& \leq \sup_{y \in \mathbb{R}^d} |(1+|y|)^{2d} \mathcal{F}[g^{l_i}](y)| \sum_{x \in \mathbb{Z}^d}  (1+|x|)^{-2d}< \infty.
		\end{align*}
		In the last line, we used the fact that the functions $(g^{l_i})_{1 \leq i \leq n}$ are infinitely differentiable with integrable derivatives. Finally, by considering $f((\cdot+U)/r)$ instead of $f$ and using the same computations, but with the expectation $\mathbb{E}[\cdot|U]$, and then taking the expectation with respect to $U$, we obtain the result.
	\end{proof}

	Next lemma, coupled with Lemma~\ref{lemma_poisson_k}, allows to derive simple asymptotic expression for the cumulants of $T_r^0$ as $r \to \infty$. 
	\medskip \begin{lemma}\label{lemma_tail}
		Let $r > 0$, $n \geq 1$, $(f_i)_{1\leq i \leq n} \in \mathcal{S}(\mathbb{R}^d)^n$ and $\varphi$ be a characteristic function. We note $f_{i, r}(x) = f_i(x/r)$. Then, for any $\beta > 0$, there exists $C(\beta) < \infty$ such that:
		$$\sum_{x \in \mathbb{Z}^d\setminus\{0\}} \left|\circledast_{i = 1}^n \{\overline{\mathcal{F}[f_{i, r}]}\varphi\}(x)\right| \leq C(\beta) r^{-\beta}.$$
	\end{lemma}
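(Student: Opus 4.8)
The plan is to reduce everything to a quantitative spatial-decay estimate for the convolution $s_r := \circledast_{i=1}^n\{\overline{\mathcal{F}[f_{i,r}]}\varphi\}$ and then to sum that estimate over $\mathbb{Z}^d\setminus\{0\}$. Since each $f_{i,r}$ is still a Schwartz function, Lemma~\ref{lemma_fourier_convol} applies and gives, with $\xi$ a random variable of characteristic function $\varphi$, that $s_r\in L^1(\mathbb{R}^d)$ is continuous with Fourier transform $G_r:=\mathcal{F}[s_r]=\prod_{i=1}^n\mathbb{E}[f_{i,r}(\cdot+\xi)]$. The first — and only substantial — step is to prove that for every $m\in\mathbb{N}$ there is a constant $C_m<\infty$ depending on $m,n,f_1,\dots,f_n$ but neither on $r$ nor on $\varphi$ with
\begin{equation}\label{eq_decay_sr}
	|s_r(x)|\le C_m\,r^d\,(1+r|x|)^{-m},\qquad x\in\mathbb{R}^d,\ r>0.
\end{equation}

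To obtain \eqref{eq_decay_sr} I would combine two bounds. On the one hand, Young's inequality and Tonelli's theorem (as in the proof of Lemma~\ref{lemma_fourier_convol}) give $\|s_r\|_\infty\le\|G_r\|_1\le\|f_{1,r}\|_1\prod_{i=2}^n\|f_{i,r}\|_\infty=r^d\|f_1\|_1\prod_{i=2}^n\|f_i\|_\infty$, which already yields \eqref{eq_decay_sr} when $r|x|\le1$. On the other hand, running the same integration by parts as in the proof of Lemma~\ref{lemma_fourier_convol} but with $f_{i,r}$ in place of $f_i$ gives $|x^\alpha s_r(x)|\le(2\pi)^{-|\alpha|}\|\partial^\alpha G_r\|_1$ for every multi-index $\alpha$; by the Leibniz rule and the scaling identity $\partial_{k_j}\mathbb{E}[f_{i,r}(k+\xi)]=r^{-1}\mathbb{E}[(\partial_jf_i)((k+\xi)/r)]$, the derivative $\partial^\alpha G_r$ is a finite linear combination of products $\prod_{i=1}^n\mathbb{E}[(\partial^{\alpha_i}f_i)((\cdot+\xi)/r)]$ with $\sum_i\alpha_i=\alpha$, each carrying the prefactor $r^{-|\alpha|}$; estimating in each such product the single factor over which one integrates by its $L^1(\mathbb{R}^d)$-norm — which produces exactly one extra factor $r^d$, since $\int_{\mathbb{R}^d}\mathbb{E}[|g((k+\xi)/r)|]\,dk=r^d\|g\|_1$ — and all the remaining factors by their $L^\infty$-norm (no power of $r$), one gets $\|\partial^\alpha G_r\|_1\le C_\alpha\,r^{d-|\alpha|}$ with $C_\alpha$ independent of $r$. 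Summing over $|\alpha|=m$ and using $|x|^m\le d^{m/2}\max_j|x_j|^m$ then gives $|x|^m|s_r(x)|\le C_m'\,r^{d-m}$, which is \eqref{eq_decay_sr} when $r|x|>1$.

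Granting \eqref{eq_decay_sr}, fix $\beta>0$ and pick an integer $m>d+\beta$. For $r\ge1$ and $x\in\mathbb{Z}^d\setminus\{0\}$ one has $(1+r|x|)^{-m}\le r^{-m}|x|^{-m}$, so
\[
	\sum_{x\in\mathbb{Z}^d\setminus\{0\}}|s_r(x)|\le C_m\,r^{d-m}\sum_{x\in\mathbb{Z}^d\setminus\{0\}}|x|^{-m}\le C_m\Bigl(\sum_{x\in\mathbb{Z}^d\setminus\{0\}}|x|^{-m}\Bigr)\,r^{-\beta},
\]
the last sum being finite because $m>d$; for $0<r<1$, splitting the sum at $|x|=1/r$ and using $|s_r(x)|\le C_m r^d$ on the $O(r^{-d})$ terms with $|x|\le1/r$ and \eqref{eq_decay_sr} on the rest shows $\sum_{x\in\mathbb{Z}^d\setminus\{0\}}|s_r(x)|=O(1)$, which is $\le C r^{-\beta}$ since $r^{-\beta}>1$ there. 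Taking $C(\beta)$ to be the larger of the two resulting constants closes the proof.

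I expect the only real obstacle to be the bookkeeping of the powers of $r$ in \eqref{eq_decay_sr}: the $m$-th weighted bound on $s_r$ gains a factor $r^{-m}$ from distributing the $m$ derivatives onto the Schwartz functions $f_i$ via the chain rule, but loses one factor $r^d$ coming from the single slot of the $n$-fold convolution that must be measured in $L^1$ rather than $L^\infty$; this rate $r^{d-m}$ must then be balanced against the crude bound $\|s_r\|_\infty\lesssim r^d$ to produce an estimate that is at once uniform in $r$ and summable over $\mathbb{Z}^d\setminus\{0\}$, which is exactly what forces the choice $m>d$. Everything else is routine, and note in particular that no decay assumption on $\varphi$ is used: the characteristic function is absorbed into the expectations defining $G_r$ and only its boundedness matters.
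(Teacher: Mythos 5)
Your proof is correct, but it takes a genuinely different route from the paper's. The paper stays entirely in the frequency domain: it bounds $|\varphi|\le 1$, rescales to reduce the claim to the pointwise decay of the $n$-fold convolution $\circledast_{i=1}^n|\mathcal{F}[f_i]|$ evaluated at $rx$, and proves the bound $Cr^{-\beta}|x|^{-d-\beta}$ by splitting the weight $|rx|^{d+\beta}$ along the convolution chain (the inequality $|rx|^{d+\beta}\lesssim |rx-x_{n-1}|^{d+\beta}+|x_{n-1}|^{d+\beta}$, iterated) together with Young's inequality; no derivatives and no probabilistic representation are used. You instead pass through Lemma~\ref{lemma_fourier_convol}: identifying $\mathcal{F}[s_r]$ with the product $G_r=\prod_i\mathbb{E}[f_{i,r}(\cdot+\xi)]$, you convert smoothness of $G_r$ into decay of $s_r$ by integration by parts in the inversion formula, with the correct bookkeeping of powers of $r$ (each derivative landing on an $f_i$ gains $r^{-1}$, the one slot measured in $L^1$ costs $r^d$), arriving at $|s_r(x)|\le C_m r^d(1+r|x|)^{-m}$; choosing $m>d+\beta$ and summing over the lattice gives the statement, and your separate treatment of $0<r<1$ keeps it uniform in $r>0$, as does the paper's argument. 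The two mechanisms are dual — decay of the factors $\mathcal{F}[f_i]$ versus smoothness of the Fourier transform of the convolution — and both yield constants independent of $\varphi$ (you use only that $\xi$ is a probability measure, the paper only $|\varphi|\le1$). Your version has the advantage of recycling the computation already done in Lemma~\ref{lemma_fourier_convol}, at the price of invoking Fourier inversion and differentiation under the expectation; the paper's is more elementary and self-contained at this point. One minor imprecision: the bound $\|s_r\|_\infty\le\|G_r\|_1$ follows from the inversion formula established in Lemma~\ref{lemma_fourier_convol} rather than from Young's inequality, but the estimate (and hence the argument) is valid either way.
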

	\begin{proof}
		Let $\beta > 0$. We first consider the case $n = 1$. Using the fact that $f_1$ is Schwartz, we obtain for some $C < \infty$:
		$$\sum_{x \in \mathbb{Z}^d\setminus\{0\}} \left|\overline{\mathcal{F}[f_{1, r}]}(x)\varphi(x)(x)\right| \leq \sum_{x \in \mathbb{Z}^d\setminus\{0\}} r^{d} |\mathcal{F}[f_1] (rx)| \leq r^d \sum_{x \in \mathbb{Z}^d} C |rx|^{-\beta-d} = C_{\beta} r^{-\beta},$$
		where $C_{\beta} = C \sum_{x \in \mathbb{Z}^d} |x|^{-d - \beta} < \infty$.
		For the $n \geq 2$ case, we prove that for $x \neq 0$, then $\left|\circledast_{i = 1}^n \{\mathcal{F}[f_{i, r}]\varphi\}(x)\right| \leq C r^{-\beta} |x|^{-d - \beta}$. Using $|\varphi| \leq 1$ and changes of variables we get, $$\left|\circledast_{i = 1}^n \{\overline{\mathcal{F}[f_{i, r}]}\varphi\}(x)\right| \leq \circledast_{i = 1}^n \left|\mathcal{F}[f_{i, r}]\right|(x) = r^d \circledast_{i = 1}^n \left|\mathcal{F}[f_{i}]\right|(rx).$$
		Using that for $x \neq 0$, $$1 = \frac{|rx|^{d + \beta}}{|rx|^{d +\beta}} \leq 2^{d+\beta}\frac{|rx - x_{n-1}|^{d + \beta} + |x_{n-1}|^{d+\beta}}{|rx|^{d + \beta}},$$
		we get $I \leq 2^{d+\beta} r^{-\beta} |x|^{-d - \beta}(A+B),$ with $A,B$ given respectively by:
		$$\int_{(\mathbb{R}^d)^{n-1}} |rx - x_{n-1}|^{d+\beta}|\mathcal{F}[f_n](r x - x_{n-1}) \prod_{i = 1}^{n-1} \mathcal{F}[f_i](x_i-{x_{i-1}})| dx_1 \dots d_{x_{n-1}},$$
		$$\int_{(\mathbb{R}^d)^{n-1}} |x_{n-1}|^{d+\beta}|\mathcal{F}[f_n](r x - x_{n-1}) \prod_{i = 1}^{n-1} \mathcal{F}[f_i](x_i-{x_{i-1}})| dx_1 \dots d_{x_{n-1}},$$
		where, by convention $x_0 = 0$. To bound $A$ we use the Young's inequality for the convolution:
		$$A \leq \sup_{y \in \mathbb{R}^d} ||y|^{d+\beta} |\mathcal{F}[f_n](y)| \prod_{i = 1}^{n-1} \|\mathcal{F}[f_i]\|_1.$$
		Using the inequality $$|x_{n-1}|^{d+\beta} \leq (n-1)^{d+\beta}\left(|x_{n-1} - x_{n-2}|^{d+\beta} + \dots + |x_2 - x_1|^{d+\beta} + |x_1|^{d+\beta}\right),$$
		we bound $B$ similarly:
		$$B \leq \|\mathcal{F}[f_n]\|_{\infty} (n-1)^{d+\beta} \sum_{i = 1}^{n-1} \sup_{y \in \mathbb{R}^d} |\mathcal{F}[f_i](y) |y|^{d+\beta}| \prod_{j = 1, j \neq i}^{n-1} \|\mathcal{F}[f_j]\|_1.$$
		We have thus proved that for all $x \neq 0$, $\left|\circledast_{i = 1}^n \{\overline{\mathcal{F}[f_{i, r}]}\varphi\}(x)\right| \leq C r^{-\beta} |x|^{-d - \beta}$, for some $C < \infty$. The result follows by summing over $x \in \mathbb{Z}^d\setminus\{0\}$ and using $d + \beta > d$.
	\end{proof}

	\medskip \begin{remark}
		Note that the proof of Lemmas \ref{lemma_fourier_convol}, \ref{lemma_poisson_k} and \ref{lemma_moment} can be adapted to the setting where the functions $(f_i)_{1 \leq i \leq n}$ are not smooth but still localized, by assuming regularity on the perturbation $\xi$ (in order to ensure that its characteristic function $\varphi$ decay fast enough). However, Lemma \ref{lemma_tail} really requires smoothness of the functions $(f_i)_{1 \leq i \leq n}$. Indeed, even for Gaussian perturbations and for non pathological functions, one can construct a counter example to its conclusion when $n = 2$ and for functions belonging to a Sobolev space; we refer to the example constructed in~\cite{yakir2021fluctuations}, already mentioned in Remark~\ref{rmk_smooth_f}. 
	\end{remark}
	
	 \appendix
	\section{Results related to the assumption on the characteristic function.}\label{sec_hyp_phi}
	
	In this appendix only, to alleviate the notations, the characteristic function of a real random variable $X$ is defined by $\varphi(k) = \mathbb{E}[e^{\bm{i} k \cdot X}]$.
	\medskip \begin{proposition}\label{prop_lim_al}
		Let $X$ be a random variable taking values on $\mathbb{R}^d$, with characteristic function $\varphi$ satisfying near the origin: $1 - \varphi(k) \sim L(|k|)|k|^{\alpha}$ for $|k| \to 0$, where $\alpha > 2$ and $L$ is slowly varying. Then, $X \equiv 0$ a.s.
	\end{proposition}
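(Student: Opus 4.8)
The plan is to show that the hypothesis forces $1-\Re\,\varphi(k)=o(|k|^2)$ as $k\to 0$, and then to argue exactly as in the proof of Lemma~\ref{lemma_var_d3} that every linear functional of $X$ vanishes almost surely.

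First I would handle the slowly varying factor. Since $L$ is slowly varying at $0$ and $\alpha>2$, the Potter-type bounds for slowly varying functions (see Proposition~\ref{prop_slwy} in Appendix~\ref{app_slwy}) provide, for any fixed $\delta\in(0,\alpha-2)$, a constant $C<\infty$ and $\eta>0$ such that $L(t)\le C\,t^{-\delta}$ for all $0<t\le\eta$. Consequently $L(|k|)\,|k|^{\alpha}\le C\,|k|^{\alpha-\delta}=o(|k|^2)$ as $|k|\to0$, and the assumption $1-\varphi(k)\sim L(|k|)|k|^{\alpha}$ gives $|1-\varphi(k)|=o(|k|^2)$. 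Taking real parts and using $\Re\,\varphi(k)=\mathbb{E}[\cos(k\cdot X)]\le 1$, I obtain $0\le 1-\Re\,\varphi(k)=\Re\bigl(1-\varphi(k)\bigr)\le|1-\varphi(k)|=o(|k|^2)$.

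Next, I would fix a unit vector $e\in\mathbb{R}^d$ and specialize the previous estimate to $k=te$ with $t\downarrow 0$. Writing $1-\cos\theta=2\sin^2(\theta/2)$ as in~\eqref{eq_phi_couplage}, this reads $t^{-2}\bigl(1-\Re\,\varphi(te)\bigr)=2\,\mathbb{E}\bigl[t^{-2}\sin^2\!\bigl(t\,(e\cdot X)/2\bigr)\bigr]\to 0$. The integrand is nonnegative and converges pointwise to $(e\cdot X)^2/4$ as $t\to0^{+}$, so Fatou's lemma yields $\tfrac12\,\mathbb{E}\bigl[(e\cdot X)^2\bigr]\le\liminf_{t\to0^{+}} t^{-2}\bigl(1-\Re\,\varphi(te)\bigr)=0$, hence $e\cdot X=0$ almost surely. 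Applying this with $e=e_1,\dots,e_d$ the vectors of the canonical basis gives $X=0$ almost surely.

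The argument is routine once these pieces are assembled; the only point deserving a moment of care is the passage from the qualitative hypothesis (``$\alpha>2$ and $L$ slowly varying'') to the genuine little-$o$ bound $L(|k|)|k|^{\alpha}=o(|k|^2)$ via a Potter-type estimate near the origin, together with the harmless observation that the assumption controls $|1-\varphi|$ and therefore its real part. I do not anticipate any real obstacle.
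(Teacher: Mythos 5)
Your proof is correct and follows essentially the same route as the paper's: both rewrite $1-\Re\,\varphi(k)=2\,\mathbb{E}[\sin^2(k\cdot X/2)]$, bound this via the hypothesis on $1-\varphi$, and apply Fatou's lemma along coordinate directions to conclude $\mathbb{E}[(e_i\cdot X)^2]=0$ and hence $X\equiv 0$ a.s. The only cosmetic difference is that you absorb the slowly varying factor through a Potter-type bound up front (note that Proposition~\ref{prop_slwy} as literally stated only yields the exponent $-1/2$, but the standard fact that $t^{\varepsilon}L(t)\to 0$ as $t\to 0$ for every $\varepsilon>0$, provable from the same representation theorem, gives the arbitrary $\delta\in(0,\alpha-2)$ you need), whereas the paper keeps $L$ inside the $\liminf$ and uses $\nu^{\alpha-2}L(|k_0|\nu)\to 0$.
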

	\begin{proof}
		The proof follows the idea of Lemma~\ref{lemma_var_d3}. Since $X$ is real, we can express $1 - \Re \varphi(k) = \mathbb{E}[1 - \cos(k \cdot X)] = 2\mathbb{E}[\sin^2(k \cdot X/2)]$. Moreover, because $1 - \varphi \in L^{\infty}(\mathbb{R}^d)$ and  $1 - \varphi(k) \sim L(|k|)|k|^{\alpha} $ for $|k| \to 0$, there exists a constant $C < \infty$ such that, $1 - \Re{\varphi(k)} \leq |1 - \varphi(k)| \leq C L(|k|)|k|^{\alpha}$ for all $k \in \mathbb{R}^d \setminus\{0\}$. As a consequence, we have
		$$\forall k \in \mathbb{R}^d,~\mathbb{E}[\sin^2(k \cdot X/2)] \leq \frac{CL(|k|)|k|^{\alpha}}{2}.$$
		Let $k = \nu k_0$ with $\nu > 0$ and $k_0 \in \mathbb{R}^d\setminus\{0\}$. Using the Fatou's lemma and $\alpha > 2$, we get
		$$0 \leq \frac14\mathbb{E}[|X\cdot k_0|^2]=\mathbb{E}[\underset{\nu \to 0}{\operatorname{\lim\inf}} \frac{\sin^2(\nu k_0\cdot X/2)}{|\nu|^2}] \leq \frac{C|k_0|^{\alpha}}{2} \underset{\nu \to 0}{\operatorname{\lim\inf}} \nu^{\alpha -2} L(|k_0| \nu)= 0 .$$ 
		Hence, for all $i \in \{1, \dots, d\}$, a.s. $X\cdot e_i \equiv 0$, where $(e_i)_{1 \leq i \leq d}$ is the canonical basis of $\mathbb{R}^d$ and therefore $X \equiv 0$ a.s.
	\end{proof}
	
	\medskip \begin{lemma}\label{lemma_char_and_moment}
		Let $X$ be a random variable taking values on $\mathbb{R}^d$, with characteristic function $\varphi$ satisfying near the origin: $1 - \varphi(k) \sim L(|k|)|k|^{\alpha}$ for $|k| \to 0$, where $0 < \alpha \leq 2$ and $L$ is slowly varying. Then, for all $\mu < \alpha$, $\mathbb{E}||X|^{\mu}] < \infty.$
	\end{lemma}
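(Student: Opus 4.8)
The plan is to reduce to the one-dimensional case and then exploit the classical integral representation of fractional moments in terms of the real part of the characteristic function. Since $0 < \mu < \alpha \le 2$ we have $\mu < 2$, so $t \mapsto t^{\mu/2}$ is subadditive on $[0,\infty)$ and
$$|X|^{\mu} = \Big(\sum_{i=1}^{d} X_i^2\Big)^{\mu/2} \le \sum_{i=1}^{d} |X_i|^{\mu},$$
where $X_i = X \cdot e_i$ and $(e_i)$ is the canonical basis. Each $X_i$ is a real random variable with characteristic function $t \mapsto \varphi(t e_i)$, which satisfies $1 - \varphi(t e_i) \sim L(|t|)|t|^{\alpha}$ as $t \to 0$. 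Hence it suffices to treat a real random variable $Y$ whose characteristic function $\psi$ obeys $1 - \psi(t) \sim L(|t|)|t|^{\alpha}$ as $t \to 0$.

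For such $Y$ and $0 < \mu < 2$, I would use the identity
$$\mathbb{E}[|Y|^{\mu}] = \frac{1}{c_{\mu}} \int_{0}^{\infty} \frac{1 - \Re\psi(t)}{t^{1+\mu}}\, dt, \qquad c_{\mu} := \int_{0}^{\infty} \frac{1 - \cos s}{s^{1+\mu}}\, ds \in (0,\infty),$$
which holds as an equality in $[0,\infty]$: it follows from Tonelli's theorem applied to the nonnegative integrand $(1 - \cos(tY)) t^{-1-\mu}$, the substitution $s = t|Y|$ giving $\int_{0}^{\infty} (1 - \cos(tY)) t^{-1-\mu}\, dt = c_{\mu} |Y|^{\mu}$, and $\mathbb{E}[1 - \cos(tY)] = 1 - \Re\psi(t)$. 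It then remains to check that the right-hand integral is finite. On $[\delta, \infty)$ it is bounded by $2 \int_{\delta}^{\infty} t^{-1-\mu}\, dt < \infty$ since $\mu > 0$. On $(0,\delta)$, the hypothesis gives $1 - \Re\psi(t) \le |1 - \psi(t)| \le 2 L(t) t^{\alpha}$ for $\delta$ small, so
$$\int_{0}^{\delta} \frac{1 - \Re\psi(t)}{t^{1+\mu}}\, dt \le 2 \int_{0}^{\delta} L(t)\, t^{\alpha - \mu - 1}\, dt,$$
and since $L$ is slowly varying at $0$, the Potter bounds (see Appendix~\ref{app_slwy} and Proposition~\ref{prop_slwy}), applied with the fixed choice $\varepsilon := (\alpha - \mu)/2 > 0$, give $L(t) \le C t^{-\varepsilon}$ for $t$ near $0$; the integrand is then $O(t^{\alpha - \mu - 1 - \varepsilon}) = O(t^{(\alpha-\mu)/2 - 1})$, which is integrable near $0$. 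This closes the argument.

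The one delicate point is the strength of the Potter estimate: one needs $L(t) = O(t^{-\varepsilon})$ near zero with $\varepsilon$ chosen strictly below $\alpha - \mu$, rather than a fixed exponent such as $1/2$ (the naive bound would only yield finiteness for $\mu < \alpha - 1/2$). The general Potter bound, valid for any slowly varying function with $\varepsilon$ arbitrarily small, is exactly what makes the threshold $\mu < \alpha$ sharp here. As an alternative that avoids the integral identity, one could instead combine the truncation inequality $\mathbb{P}(|Y| \ge 1/T) \le c_0^{-1} T^{-1} \int_{0}^{T} (1 - \Re\psi(t))\, dt$ — a consequence of $\mathbb{E}\big[1 - \frac{\sin(TY)}{TY}\big] = T^{-1}\int_0^T(1-\Re\psi(t))\,dt$ and $1 - \frac{\sin x}{x} \ge c_0 > 0$ for $|x| \ge 1$ — with the same bound $1 - \Re\psi(t) \le 2 L(t) t^{\alpha}$ and Potter estimate to obtain $\mathbb{P}(|Y| \ge \lambda) = O(\lambda^{-\alpha + \varepsilon})$, and conclude via $\mathbb{E}[|Y|^{\mu}] = \mu \int_{0}^{\infty} \lambda^{\mu-1} \mathbb{P}(|Y| \ge \lambda)\, d\lambda$ upon choosing $\varepsilon < \alpha - \mu$.
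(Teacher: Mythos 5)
Your argument is correct, and it takes a genuinely different route from the paper's. You reduce to coordinates via the subadditivity of $t\mapsto t^{\mu/2}$ (valid since $\mu<2$) and then invoke the classical representation $\mathbb{E}[|Y|^{\mu}]=c_{\mu}^{-1}\int_0^{\infty}(1-\Re\psi(t))t^{-1-\mu}\,dt$ for $\mu\in(0,2)$, which by Tonelli holds as an identity in $[0,\infty]$; finiteness then follows from $1-\Re\psi(t)\leq 2L(t)t^{\alpha}$ near the origin together with $L(t)=O(t^{-\varepsilon})$ for every $\varepsilon>0$ and an arbitrarily small choice of $\varepsilon<\alpha-\mu$. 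The paper instead works directly with $1-\Re\varphi(k)=2\mathbb{E}[\sin^2(k\cdot X/2)]$, lower-bounds $\sin^2$ on the annulus $\pi/2\leq|k\cdot X|<\pi$ to obtain $\mathbb{P}(2^{i-1}\leq|X\cdot e_j|<2^{i})\lesssim 2^{-i(\alpha-\varepsilon)}$ at dyadic scales $k=\pi 2^{-i}e_j$, telescopes to get a power-law tail bound, and concludes via the layer-cake formula --- which is essentially your sketched second alternative via the truncation inequality. Both proofs rest on the same analytic inputs (the upper bound on $1-\Re\psi$ near zero and the fact that $t^{\varepsilon}L(t)\to 0$, which the paper also asserts without appeal to Proposition~\ref{prop_slwy}); your main route is shorter and more classical, while the paper's yields an explicit quantitative tail estimate as a by-product. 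One cosmetic point: Proposition~\ref{prop_slwy} as stated gives ratio bounds of the form $L(|x|/r)/L(1/r)$ rather than the direct estimate $L(t)\leq Ct^{-\varepsilon}$ you need, so you should either cite the representation theorem of slowly varying functions directly or note, as the paper does, that this bound is immediate from slow variation.
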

	\begin{proof}
		Arguing as in the beginning of the proof of Proposition~\ref{prop_lim_al} and using the fact that $t \in [\pi/2, \pi) \mapsto \sin^2(t/2)$ is increasing and lower bounded by $1/2$ we have
		$$\forall k \in \mathbb{R}^d,~\frac12\mathbb{E}[\mathbf{1}_{\pi/2 \leq |k \cdot X| < \pi}] \leq  \mathbb{E}[\sin^2(k \cdot X/2)] \leq \frac{C|k|^{\alpha} L(|k|)}{2}.$$
		Let $0 < \varepsilon  < \alpha$. Since $L$ is slowly varying, there exist $r_0 > 0$ such that for $|k| \leq r_0$, then $L(|k|) \leq |k|^{- \varepsilon}$. We denote $\alpha' = \alpha - \varepsilon$ and $i_0 \in \mathbb{N}$ such that $\pi 2^{-i_0} \leq r_0$. Consequently, for $k = \pi 2^{-i} e_j$ where $(e_j)_{1 \leq j \leq d}$ is the canonical basis of $\mathbb{R}^d$, we obtain
		$$\forall i \geq i_0+1,~ \mathbb{P}(|X\cdot e_j| \geq 2^{i-1}) -  \mathbb{P}(|X\cdot e_j| \geq 2^{i}) \leq \frac{C \pi^{\alpha'}}{2^{i\alpha'}}.$$
		Summing the previous inequalities and using $\mathbb{P}(|X\cdot e_j| \geq 2^{i}) \to 0$ as $i \to \infty$, we get
		$$\forall i\geq i_0,~\mathbb{P}(|X\cdot e_j| \geq 2^{i}) \leq \frac{C \pi^{\alpha'}}{1 - 2^{-\alpha'}} \frac1{2^{\alpha'(i+1)}}.$$
		Let $t \geq \log_2(i_0)$ and $i \geq i_0$ such that $2^{i} \leq t \leq 2^{i+1}$. Then,
		$$\mathbb{P}(|X\cdot e_j| \geq t) \leq \mathbb{P}(|X \cdot e_j| \geq 2^{i}) \leq \frac{C \pi^{\alpha'}}{1 - 2^{-\alpha'}} \frac1{2^{\alpha'(i+1)}} \leq \frac{C \pi^{\alpha'}}{1 - 2^{-\alpha'}} \frac1{t^{\alpha'}}.$$
		Let $\mu < \alpha$. Choosing $\varepsilon < \alpha - \mu$, we obtain the conclusion with the Fubini's theorem: 
		\begin{align*}
			\mathbb{E}||X|^{\mu}] & \leq \sum_{j = 1}^d \mu \int_{0}^{\infty} \mathbb{P}(|X \cdot e_j| \geq t/d) t^{\mu-1}dt \\& \leq d^2 \log_2(i_0) + d^{1 - \alpha'} \frac{C \pi^{\alpha'}\mu}{1 - 2^{-\alpha'}} \int_{d \log_2(i_0)}^{\infty} \frac{dt}{t^{\alpha - \varepsilon - \mu +1}} < \infty.
		\end{align*}
		
	\end{proof}
	
	\section{Results related to slowly varying functions.}\label{app_slwy}
	
	\begin{proposition}\label{prop_slwy}
		Let $L$ be a slowly varying function. Then, there exist $0 < b, C, q < \infty$ such that for all $r > 0$ and $1 \leq |x| \leq b r$, 
		\begin{equation}\label{eq_slwy_1}
			L(|x|/r) \leq C |x|^q L(1/r). 
		\end{equation}
		Moreover, for all $\eta > 0$, there exists $r_0 > 0$ such that for all $r \geq r_0$, and $|x| \leq 1$, 
		\begin{equation}\label{eq_slwy_2}
			L(|x|/r) \leq C |x|^{-1/2} L(1/r). 
		\end{equation}
	\end{proposition}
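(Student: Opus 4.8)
The plan is to deduce both estimates from Potter-type bounds for slowly varying functions. Since $L$ is slowly varying at zero, the function $M(x):=L(1/x)$ is (measurable and) slowly varying at infinity in the usual sense, so by Karamata's representation theorem (see, e.g., Bingham, Goldie and Teugels, \emph{Regular Variation}, \S1.3) we may write, for $x$ large enough, $M(x)=c(x)\exp\!\left(\int_{B}^{x}\eta(u)\,u^{-1}\,du\right)$ with $c(x)\to c_{0}\in(0,\infty)$ and $\eta(u)\to 0$. Fixing $\delta>0$, choosing the lower limit $B$ large enough that $|\eta|\le\delta$ beyond it and that $c(y)/c(x)$ stays below a fixed constant, and bounding $\bigl|\int_{x}^{y}\eta(u)u^{-1}du\bigr|\le\delta\,|\log(y/x)|$, one obtains a constant $A<\infty$ and a threshold such that $M(y)/M(x)\le A\max\{(y/x)^{\delta},(y/x)^{-\delta}\}$ for $x,y$ beyond that threshold (this is Potter's bound, cf.\ \S1.5 of the same reference). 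Rewriting this for $L=M(1/\cdot)$ produces some $\delta_{0}>0$, depending on $\delta$, with
\begin{equation*}
\frac{L(s)}{L(t)}\le A\,\max\!\left\{\left(\tfrac{s}{t}\right)^{\delta},\left(\tfrac{t}{s}\right)^{\delta}\right\},\qquad 0<s,t\le\delta_{0}.
\end{equation*}

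To prove \eqref{eq_slwy_1} I would substitute $s=|x|/r$ and $t=1/r$, so that $s/t=|x|$, and take $b:=\delta_{0}$ and $q:=\delta$ (with $\delta$ chosen freely, say $\delta=1$). Whenever the range $1\le|x|\le br$ is nonempty one has $br\ge 1$, hence $t=1/r\le b=\delta_{0}$ and $s=|x|/r\le b=\delta_{0}$, so both arguments lie in $(0,\delta_{0}]$ and the displayed bound applies; since $|x|\ge 1$, $\max\{|x|^{q},|x|^{-q}\}=|x|^{q}$, giving $L(|x|/r)\le A|x|^{q}L(1/r)$, i.e.\ \eqref{eq_slwy_1} with $C=A$ (for $r<1/b$ the statement is vacuous). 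For \eqref{eq_slwy_2} I would apply the same bound with $\delta=1/2$ and set $r_{0}:=1/\delta_{0}$: for $r\ge r_{0}$ and $0<|x|\le 1$ we have $t=1/r\le\delta_{0}$ and $s=|x|/r\le 1/r\le\delta_{0}$, while now $t/s=1/|x|\ge 1$, so $\max\{(s/t)^{1/2},(t/s)^{1/2}\}=|x|^{-1/2}$ and $L(|x|/r)\le A|x|^{-1/2}L(1/r)$, again with $C=A$. The quantifier over $\eta$ in the second statement plays no role and can be dropped.

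The only non-routine ingredient is the passage from the pointwise defining limit of slow variation to a \emph{uniform}, power-type control valid over the whole non-compact range $1\le|x|\le br$ (the dilation range $|x|/r\in(1/r,b]$ is not compact), and this is exactly what Karamata's representation — equivalently, the uniform convergence theorem for slowly varying functions, which is also where measurability of $L$ is used — provides. Once that uniform bound is available, the rest is the bookkeeping above: the change of variables $s=|x|/r$, $t=1/r$ together with the verification that both arguments remain in the neighbourhood $(0,\delta_{0}]$ of the origin on which the bound holds.
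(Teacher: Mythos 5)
Your proof is correct and follows essentially the same route as the paper: both arguments rest on the Karamata representation theorem for slowly varying functions, used to control $L(s)/L(t)$ by a power of $s/t$ uniformly for $s,t$ near zero (you package this as Potter's bound; the paper estimates the representation's integral directly, taking $q=\|\varepsilon\|_\infty$ in the first part and $|\varepsilon|\le 1/2$ beyond a threshold in the second). The bookkeeping with $s=|x|/r$, $t=1/r$ and the vacuity observation for $r<1/b$ are all in order.
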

	\begin{proof}
		The bounds~\eqref{eq_slwy_1} and~\eqref{eq_slwy_2} appears in the proof of Theorem 3 of~\cite{soshnikov2002gaussian}. For completeness, we detail the arguments in the following. According to the representation theorem of slowly varying functions (see Theorem 1.2. of~\cite{seneta2006regularly}), 
		$$\forall t \in (0, b],~L(t)= \exp\left(c(t) + \int_{b^{-1}}^{t^{-1}} \frac{\varepsilon(\tau)}{\tau} d\tau\right),$$
		where $b > 0$, $c$ is bounded and has a finite limit when $t \to 0$, and $\varepsilon$ is continuous and converges to $0$ as $\tau \to \infty$. Accordingly, for $1 \leq |x| \leq b r$, 
		\begin{align*}
			\frac{L(|x|/r)}{L(1/r)} &\leq \exp\left(2 \|c\|_{\infty} + \int_{r/|x|}^r \|\varepsilon\|_{\infty} \frac{d\tau}{\tau} \right) = e^{2 \|c\|_{\infty}} |x|^{\|\varepsilon\|_{\infty}},
		\end{align*}
		which gives~\eqref{eq_slwy_1}. It remains to prove~\eqref{eq_slwy_2}. We use the representation theorem and the fact that $\varepsilon(\tau)$ converges to $0$ as $\tau \to \infty$ (which ensure that there exists $r_0 \geq 1/b$, such that for all $\tau \geq r_0$, then $|\varepsilon(\tau)| \leq 1/2$), to get for $r \geq r_0$ and $|x| \le 1$
		\begin{align*}
			\frac{L(|x|/r)}{L(1/r)} &\leq \exp\left(2 \|c\|_{\infty} + \frac12\int_r^{r/|x|} \frac{d\tau}{\tau} \right) = e^{2 \|c\|_{\infty}} |x|^{-1/2}.
		\end{align*}
	\end{proof}

	\section*{Acknowledgment}
	
	The author sincerely thanks Frédéric Lavancier for thoroughly reviewing the first version of the manuscript and providing valuable suggestions that improved its clarity and presentation.
	
	\bibliographystyle{acm}

\end{document}